\providecommand{\U}[1]{\protect \rule{.1in}{.1in}}
\newtheorem{theorem}{Theorem}
\theoremstyle{plain}
\newtheorem{corollary}{Corollary}
\newtheorem{definition}{Definition}
\newtheorem{lemma}{Lemma}
\newtheorem{remark}{Remark}
\numberwithin{equation}{section}
\begin{document}
\title[Multi-sublinear operators and {Commutators}]{Multi-sublinear operators generated by multilinear fractional
integral{\ operators} and {commutators} on the product generalized local
Morrey spaces}
\author{F. GURBUZ}
\address{ANKARA UNIVERSITY, FACULTY OF SCIENCE, DEPARTMENT OF MATHEMATICS, TANDO\u{G}AN
06100, ANKARA, TURKEY }
\email{feritgurbuz84@hotmail.com}
\urladdr{}
\thanks{}
\thanks{}
\thanks{}
\date{}
\subjclass[2010]{ 42B20, 42B25, 42B35}
\keywords{Multi-{sublinear operator; multilinear fractional integral\ operator;
commutator; generalized local Morrey space; local Campanato space}}
\dedicatory{ }
\begin{abstract}
The aim of this paper is to get the boundedness of certain multi-sublinear
operators generated by {multilinear }fractional integral{\ operators} on the
product generalized local Morrey spaces under generic size conditions which
are satisfied by most of the operators in harmonic analysis. We also prove
that the commutators of multilinear operators generated by local campanato
functions and multilinear fractional integral{\ operators} are also bounded on
the product generalized local Morrey spaces.

\end{abstract}
\maketitle

\section{Introduction}

The classical Morrey spaces $L_{p,\lambda}$ were introduced by Morrey
\cite{Morrey} in 1938 to study the local behavior of solutions of second order
elliptic partial differential equations (PDEs). Later, there were many
applications of Morrey space to the Navier-Stokes equations (see
\cite{Mazzucato}), the Schr\"{o}dinger equations (see \cite{Ruiz}) and the
elliptic problems with discontinuous coefficients (see \cite{Caf, Pal}).

Let ${\mathbb{R}^{n}}$ be the $n$-dimensional Euclidean space of points
$x=(x_{1},...,x_{n})$ with norm $|x|=\left(
{\textstyle \sum \nolimits_{i=1}^{n}}
x_{i}^{2}\right)  ^{1/2}$ and corresponding $m$-fold product spaces $\left(
m\in%
\mathbb{N}
\right)  $ be $\left(  {\mathbb{R}^{n}}\right)  ^{m}={\mathbb{R}^{n}%
\times \cdots \times \mathbb{R}^{n}}$. Let $B=B(x,r)$ denotes open ball centered
at $x$ of radius $r$ for $x\in{\mathbb{R}^{n}}$ and $r>0$ and $B^{c}(x,r)$ its
complement. Also $|B(x,r)|$ is the Lebesgue measure of the ball $B(x,r)$ and
$|B(x,r)|=v_{n}r^{n}$, where $v_{n}=|B(0,1)|$. For a given measurable set $E$,
we also denote the Lebesgue measure of $E$ by $\left \vert E\right \vert $. For
any given $X\subseteq{\mathbb{R}^{n}}$ and $0<p<\infty$, denote by
$L_{p}\left(  X\right)  $ the spaces of all measurable functions $f$
satisfying%
\[
\left \Vert f\right \Vert _{L_{p}\left(  X\right)  }=\left(
{\displaystyle \int \limits_{X}}
\left \vert f\left(  x\right)  \right \vert ^{p}dx\right)  ^{1/p}<\infty.
\]

We recall the definition of classical Morrey spaces $L_{p,\lambda}$ as%

\[
L_{p,\lambda}\left(  {\mathbb{R}^{n}}\right)  =\left \{  f:\left \Vert
f\right \Vert _{L_{p,\lambda}\left(  {\mathbb{R}^{n}}\right)  }=\sup
\limits_{x\in{\mathbb{R}^{n}},r>0}r^{-\frac{\lambda}{p}}\Vert f\Vert
_{L_{p}(B(x,r))}<\infty \right \}  ,
\]
where $f\in L_{p}^{loc}({\mathbb{R}^{n}})$, $0\leq \lambda \leq n$ and $1\leq
p<\infty$.

Note that $L_{p,0}=L_{p}({\mathbb{R}^{n}})$ and $L_{p,n}=L_{\infty
}({\mathbb{R}^{n}})$. If $\lambda<0$ or $\lambda>n$, then $L_{p,\lambda
}={\Theta}$, where $\Theta$ is the set of all functions equivalent to $0$ on
${\mathbb{R}^{n}}$. It is known that $L_{p,\lambda}({\mathbb{R}^{n}})$ is an
extension of $L_{p}({\mathbb{R}^{n}})$ in the sense that $L_{p,0}%
=L_{p}({\mathbb{R}^{n}})$.

We also denote by $WL_{p,\lambda}\equiv WL_{p,\lambda}({\mathbb{R}^{n}})$ the
weak Morrey space of all functions $f\in WL_{p}^{loc}({\mathbb{R}^{n}})$ for
which
\[
\left \Vert f\right \Vert _{WL_{p,\lambda}}\equiv \left \Vert f\right \Vert
_{WL_{p,\lambda}({\mathbb{R}^{n}})}=\sup_{x\in{\mathbb{R}^{n}},r>0}%
r^{-\frac{\lambda}{p}}\Vert f\Vert_{WL_{p}(B(x,r))}<\infty,
\]
where $WL_{p}(B(x,r))$ denotes the weak $L_{p}$-space of measurable functions
$f$ for which%
\[
\Vert f\Vert_{WL_{p}(B(x,r))}\equiv \Vert f\chi_{B(x,r)}\Vert_{WL_{p}%
({\mathbb{R}^{n}})}=\sup_{t>0}t\left \vert \left \{  y\in B(x,r):\left \vert
f\left(  y\right)  \right \vert >t\right \}  \right \vert ^{1/p}.
\]

For the boundedness of the Hardy--Littlewood maximal operator, the fractional
integral operator and the Calder\'{o}n--Zygmund singular integral operator on
Morrey spaces, we refer the readers to \cite{Adams, ChFra, Peetre}. For the
properties and applications of classical Morrey spaces, see \cite{ChFraL1,
ChFraL2, Gurbuz4} and the references therein.

Let $f\in L_{1}^{loc}\left(  {\mathbb{R}^{n}}\right)  $. The Hardy-Littlewood
maximal operator $M$ is defined by
\[
Mf(x)=\sup_{t>0}|B(x,t)|^{-1}\int \limits_{B(x,t)}|f(y)|dy.
\]
It's well known that $M$ is bounded on $L_{p}\left(  {\mathbb{R}^{n}}\right)
$ for $1<p\leq \infty$ and for $p=1$ weak type inequality also holds.

It is well known that the standard Calder\'{o}n-Zygmund singular integral
operator, briefly a Calder\'{o}n-Zygmund operator $\overline{T}$ has the
following integral expression
\[
\overline{T}f\left(  x\right)  =\int \limits_{{\mathbb{R}^{n}}}K\left(
x,y\right)  f\left(  y\right)  dy,
\]
for any test function $f$ and $x\notin suppf$. Here $K$ is the
Calder\'{o}n-Zygmund kernel, which is a locally integrable function defined
away from the diagonal and satisfies the following size condition:%

\[
\left \vert K\left(  x,y\right)  \right \vert \leq C\left \vert x-y\right \vert
^{-n},\text{ }x\neq y,
\]
and some continuity assumptions. Boundedness of Calder\'{o}n-Zygmund operator
$\overline{T}$ on $L_{p}\left(  {\mathbb{R}^{n}}\right)  $ for any
$1<p<\infty$ is well known.

Let $f\in L^{loc}\left(  {\mathbb{R}^{n}}\right)  $. The fractional maximal
operator $M_{\alpha}$ and the fractional integral operator $\overline
{T}_{\alpha}$ (also known as the Riesz potential $I_{\alpha}$) are defined
respectively by%
\[
M_{\alpha}f(x)=\sup_{t>0}|B(x,t)|^{-1+\frac{\alpha}{n}}\int \limits_{B(x,t)}%
|f(y)|dy,\qquad0\leq \alpha<n
\]%
\[
\overline{T}_{\alpha}f\left(  x\right)  =\int \limits_{{\mathbb{R}^{n}}}%
\frac{f\left(  y\right)  }{\left \vert x-y\right \vert ^{n-\alpha}}%
dy,\qquad0<\alpha<n.
\]
$M_{\alpha}$ and $\overline{T}_{\alpha}$ play important roles in harmonic
analysis (see \cite{LuDingY, Stein93, Torch}). Also, the fractional integral
play an essential role in PDEs. It is well known that, see \cite{Stein93} for
example, $\overline{T}_{\alpha}$ is bounded from $L_{p}\left(  {\mathbb{R}%
^{n}}\right)  $ to $L_{q}\left(  {\mathbb{R}^{n}}\right)  $ for all $p>1$ and
$\frac{1}{q}=\frac{1}{p}-\frac{\alpha}{n}>0$, and $\overline{T}_{\alpha}$ is
also of weak type $\left(  1,\frac{n}{n-\alpha}\right)  ,$ this is known as
Hardy-Littlewood Sobolev inequality. Boundedness of the fractional integral
operator $\overline{T}_{\alpha}$ on the space $M_{p,\lambda}\left(
{\mathbb{R}^{n}}\right)  $ has been studied by Spanne (published by Peetre
\cite{Peetre}) and Adams \cite{Adams}.

Recall that, for $0<\alpha<n$,%
\[
M_{\alpha}f\left(  x\right)  \leq \nu_{n}^{\frac{\alpha}{n}-1}\overline
{T}_{\alpha}\left(  \left \vert f\right \vert \right)  \left(  x\right)
\]
holds (see \cite{Li-Yang}, Remark 2.1). Hence one gets the boundedness of the
fractional maximal operator $M_{\alpha}$ from the boundedness of $\overline
{T}_{\alpha},$ where $\upsilon_{n}$ is the volume of the unit ball on
${\mathbb{R}^{n}}$.

In 1976, Coifman et al. \cite{CRW} introduced the commutator $\overline{T}%
_{b}$ generated by the Calder\'{o}n-Zygmund operator $\overline{T}$ and a
locally integrable function $b$ as follows:
\begin{equation}
\overline{T}_{b}f(x)\equiv b(x)\overline{T}f(x)-\overline{T}(bf)(x)=\int
\limits_{{\mathbb{R}^{n}}}K\left(  x,y\right)  [b(x)-b(y)]f(y)dy. \label{e3}%
\end{equation}
It is well known from \cite{CRW} that $\overline{T}_{b}$ is a bounded operator
on $L_{p}\left(  {\mathbb{R}^{n}}\right)  $, $1<p<\infty$ if and only if $b\in
BMO$ (bounded mean oscillation).

Let $b$ be a locally integrable function on ${\mathbb{R}^{n}}$. Then we shall
define the commutators for a suitable function $f$ generated by the fractional
integral operators and $b$ as follows:%
\[
\lbrack b,\overline{T}_{\alpha}]f(x)\equiv b(x)\overline{T}_{\alpha
}f(x)-\overline{T}_{\alpha}(bf)(x)=%
{\displaystyle \int \limits_{{\mathbb{R}^{n}}}}
[b(x)-b(y)]\frac{f(y)}{|x-y|^{n-\alpha}}dy,
\]
where $0<\alpha<n$.

We denote by $\overrightarrow{y}=\left(  y_{1},\ldots,y_{m}\right)  $,
$d\overrightarrow{y}=dy_{1}\ldots dy_{m}$, and by $\overrightarrow{f}$ the
$m$-tuple $\left(  f_{1},...,f_{m}\right)  $, $m$, $n$ the nonnegative
integers with $n\geq2$, $m\geq1$.

Let $\overrightarrow{f}\in L_{p_{1}}^{loc}\left(  {\mathbb{R}^{n}}\right)
\times \cdots \times L_{p_{m}}^{loc}\left(  {\mathbb{R}^{n}}\right)  $. Then
multi-sublinear fractional maximal operator $M_{\alpha}^{\left(  m\right)  }$
is defined by%
\[
M_{\alpha}^{\left(  m\right)  }\left(  \overrightarrow{f}\right)  \left(
x\right)  =\sup_{t>0}\left \vert B\left(  x,t\right)  \right \vert
^{\frac{\alpha}{n}}\left[
{\displaystyle \prod \limits_{i=1}^{m}}
\frac{1}{\left \vert B\left(  x,t\right)  \right \vert }\int \limits_{B\left(
x,t\right)  }\left \vert f_{i}\left(  y_{i}\right)  \right \vert \right]
d\overrightarrow{y},\text{ \  \ }0\leq \alpha<mn.
\]
From definition, if $\alpha=0$ then $M_{\alpha}^{\left(  m\right)  }$ is the
multi-sublinear maximal operator $M^{\left(  m\right)  }$ and also; in the
case of $m=1$, $M_{\alpha}^{\left(  m\right)  }$ is the classical fractional
maximal operator $M_{\alpha}$.

{After the work of Coifman and Meyer \cite{CM} the multilinear theory is}
received increasing attention. Multilinear {Calder\'{o}n-Zygmund operators are
studied by Grafakos-Torres \cite{Grafakos1, Grafakos2, Grafakos3*} and
Grafakos-Kalton \cite{Grafakos3} and the multilinear fractional integral
operators by Grafakos \cite{Grafakos-sm} and Kenig-Stein \cite{Kenig}.}

Firstly, recall that the $m$(multi)-linear {Calder\'{o}n-Zygmund operator
}$\overline{T}^{\left(  m\right)  }$ $\left(  m\in%
\mathbb{N}
\right)  ${\ }for test vector $\overrightarrow{f}=\left(  f_{1},\ldots
,f_{m}\right)  $ is defined by%
\[
\overline{T}^{\left(  m\right)  }\left(  \overrightarrow{f}\right)  \left(
x\right)  =%
{\displaystyle \int \limits_{\left(  {\mathbb{R}^{n}}\right)  ^{m}}}
K\left(  x,y_{1},\ldots,y_{m}\right)  \left \{
{\displaystyle \prod \limits_{i=1}^{m}}
f_{i}\left(  y_{i}\right)  \right \}  dy_{1}\cdots dy_{m},\text{ \  \  \ }x\notin%
{\displaystyle \bigcap \limits_{i=1}^{m}}
suppf_{i},
\]
where $K$ is an $m$-{Calder\'{o}n-Zygmund kernel which is a locally integrable
function defined }away from the diagonal $y_{0}=y_{1}=\cdots=y_{m}$ on
$\left(  {\mathbb{R}^{n}}\right)  ^{m+1}$ satisfying the {following size
estimate:}%
\begin{equation}
\left \vert K\left(  x,y_{1},\ldots,y_{m}\right)  \right \vert \leq \frac
{C}{\left \vert \left(  x-y_{1},\ldots,x-y_{m}\right)  \right \vert ^{mn}},
\label{10*}%
\end{equation}
for some $C>0$ and some smoothness estimates, see [20-22] for details.

The result of Grafakos and Torres \cite{Grafakos1, Grafakos3*} shows that the
multilinear {Calder\'{o}n-Zygmund operator is bounded on the product of
Lebesgue spaces.}

\begin{theorem}
\label{TeoA}\cite{Grafakos1, Grafakos3*} Let $\overline{T}^{\left(  m\right)
} $ $\left(  m\in%
\mathbb{N}
\right)  $ is a $m$-linear {Calder\'{o}n-Zygmund operator. Then, for any
numbers }$1\leq p_{1},\ldots,p_{m}<\infty$ with $\frac{1}{p}=\frac{1}{p_{1}%
}+\cdots+\frac{1}{p_{m}}$, $\overline{T}^{\left(  m\right)  }$ can be extended
to a bounded operator from $L_{p_{1}}\times \cdots \times L_{p_{m}}$ into
$L_{p}$, and bounded from $L_{1}\times \cdots \times L_{1}$ into $L_{\frac{1}%
{m},\infty}$.
\end{theorem}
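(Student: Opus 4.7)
The plan is to follow the multilinear analogue of the classical Calder\'{o}n--Zygmund program: parlay a single initial strong-type bound (built into the definition of an $m$-linear Calder\'{o}n--Zygmund operator) into an endpoint weak-type estimate at $(1,\ldots,1)$, and then fill out the full range by multilinear Marcinkiewicz interpolation and duality.

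\textbf{Endpoint.} To prove $\overline{T}^{(m)}:L_{1}\times\cdots\times L_{1}\to L_{1/m,\infty}$, fix $\alpha>0$ and apply, independently in each slot, a Calder\'{o}n--Zygmund decomposition to $f_{i}\in L_{1}$ at height proportional to $\alpha^{1/m}$, writing $f_{i}=g_{i}+b_{i}$ with $b_{i}=\sum_{j}b_{i,j}$, where each $b_{i,j}$ is supported on a Whitney cube $Q_{i,j}$, has zero integral, and satisfies the standard bounds $\Vert g_{i}\Vert_{L_{\infty}}\leq C\alpha^{1/m}$, $\Vert b_{i,j}\Vert_{L_{1}}\leq C\alpha^{1/m}|Q_{i,j}|$, and $\sum_{j}|Q_{i,j}|\leq C\alpha^{-1/m}\Vert f_{i}\Vert_{L_{1}}$. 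Multilinearity then splits $\overline{T}^{(m)}(f_{1},\ldots,f_{m})$ into $2^{m}$ pieces indexed by the set $S\subseteq\{1,\ldots,m\}$ of slots occupied by the bad piece. The $S=\emptyset$ term is handled by the initial strong bound combined with Chebyshev's inequality; for each non-empty $S$, a suitable dilate of $\bigcup_{i\in S,j}Q_{i,j}$ is excised from ${\mathbb{R}^{n}}$ (an exceptional set of measure $\leq C\alpha^{-1/m}\sum_{i}\Vert f_{i}\Vert_{L_{1}}$), and the remaining contribution is estimated using the kernel size bound (\ref{10*}) together with the H\"{o}lder-continuity estimate on $K$ (part of the $m$-Calder\'{o}n--Zygmund assumption not written out in the excerpt) and the cancellation of $b_{i,j}$ against the center of its Whitney cube.

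\textbf{Full range.} Multilinear Marcinkiewicz interpolation between the new endpoint and the initial strong-type bound yields the strong estimate $L_{p_{1}}\times\cdots\times L_{p_{m}}\to L_{p}$ on a diamond-shaped subregion of admissible tuples. The remaining tuples are obtained by duality: introduce the $m$ transposes $\overline{T}^{(m),\ast k}$, $1\leq k\leq m$, defined by
\[
\langle\overline{T}^{(m),\ast k}(f_{1},\ldots,f_{m}),h\rangle=\langle\overline{T}^{(m)}(f_{1},\ldots,f_{k-1},h,f_{k+1},\ldots,f_{m}),f_{k}\rangle,
\]
and observe that each $\overline{T}^{(m),\ast k}$ is again an $m$-linear Calder\'{o}n--Zygmund operator (its kernel is obtained from $K$ by swapping $x$ with $y_{k}$ and therefore satisfies the same size and smoothness estimates). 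Running the previous two steps for each $\overline{T}^{(m),\ast k}$ and then applying $L_{p}$--$L_{p'}$ duality one slot at a time yields the bound for $\overline{T}^{(m)}$ at every tuple $(p_{1},\ldots,p_{m})$ with $1\leq p_{i}<\infty$ and $1/p=\sum_{i}1/p_{i}$.

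\textbf{Main obstacle.} The delicate point is the off-diagonal estimate for the $2^{m}-1$ mixed pieces in the endpoint step. For each non-empty $S$ one has to single out, among all the Whitney cubes entering the integration, one of smallest side length and exploit the smoothness of $K$ in that particular variable to cash in the vanishing moment of the corresponding $b_{i,j}$. The resulting series, running over the configurations of cubes across all the other slots (both inside and outside $S$) and over wildly different scales, must then be organised -- typically by a geometric decomposition into annular shells around each Whitney cube -- so that summation produces the required endpoint rate $\alpha^{-1/m}(\prod_{i}\Vert f_{i}\Vert_{L_{1}})^{1/m}$. Making the bookkeeping of this multiscale sum simultaneously uniform in the choice of $S$ is what distinguishes the multilinear argument from its linear antecedent.
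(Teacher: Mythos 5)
This theorem is quoted in the paper from \cite{Grafakos1, Grafakos3*} without any proof, so there is no internal argument to compare against; your sketch is, however, a faithful outline of the actual proof in those references: a slotwise Calder\'{o}n--Zygmund decomposition at height $\alpha^{1/m}$ (after normalizing $\Vert f_{i}\Vert_{L_{1}}=1$) giving the weak $L_{1}\times\cdots\times L_{1}\to L_{1/m,\infty}$ endpoint from one a priori strong bound, followed by multilinear Marcinkiewicz interpolation and duality through the $m$ transposes. The outline is correct and matches the cited source's strategy, including the correct identification of the hard step (organising the $2^{m}-1$ mixed terms using kernel smoothness against the vanishing moments of the bad pieces).
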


In this paper we deal with another kind of multilinear operator for
$\overrightarrow{f}=\left(  f_{1},\ldots,f_{m}\right)  $, which is called
multilinear fractional integral operator as follows%
\[
\overline{T}_{\alpha}^{\left(  m\right)  }\left(  \overrightarrow{f}\right)
\left(  x\right)  =%
{\displaystyle \int \limits_{\left(  {\mathbb{R}^{n}}\right)  ^{m}}}
\frac{1}{\left \vert \left(  x-y_{1},\ldots,x-y_{m}\right)  \right \vert
^{mn-\alpha}}\left \{
{\displaystyle \prod \limits_{i=1}^{m}}
f_{i}\left(  y_{i}\right)  \right \}  d\overrightarrow{y},
\]
whose kernel is%
\begin{equation}
\left \vert K\left(  x,y_{1},\ldots,y_{m}\right)  \right \vert =\left \vert
\left(  x-y_{1},\ldots,x-y_{m}\right)  \right \vert ^{-mn+\alpha},\text{
\  \ }0<\alpha<mn, \label{1}%
\end{equation}
where $f_{1},\ldots,f_{m}:{\mathbb{R}^{n}\rightarrow \mathbb{R}}$ are
measurable and $\left \vert \left(  x-y_{1},\ldots,x-y_{m}\right)  \right \vert
=\sqrt{%
{\textstyle \sum \limits_{i=1}^{m}}
\left \vert x-y_{i}\right \vert ^{2}}$.

It is well known that multilinear fractional integral operator was first
studied by Grafakos {\cite{Grafakos-sm}. In the following result Kenig and
Stein \cite{Kenig} have proved that the multilinear }fractional integral
operator is bounded on the product of Lebesgue spaces.

\begin{theorem}
\label{Teo-1}\cite{Kenig} Let $0<\alpha<mn$, $\overline{T}_{\alpha}^{\left(
m\right)  }$ $\left(  m\in%
\mathbb{N}
\right)  $ be an $m$-linear fractional integral operator with kernel $K$
satisfying (\ref{1}) and $f_{i}\in L_{p_{i}}\left(  {\mathbb{R}^{n}}\right)
\left(  i=1,\ldots,m\right)  $ with $1\leq p_{i}\leq \infty$ and $\frac{1}%
{q}=\frac{1}{p_{1}}+\cdots+\frac{1}{p_{m}}-\frac{\alpha}{n}>0$.

$\left(  1\right)  $ If each $p_{i}>1$, then
\[
\left \Vert \overline{T}_{\alpha}^{\left(  m\right)  }\left(  \overrightarrow
{f}\right)  \right \Vert _{L_{q}\left(  {\mathbb{R}^{n}}\right)  }\leq C%
{\displaystyle \prod \limits_{i=1}^{m}}
\left \Vert f_{i}\right \Vert _{L_{p_{i}}\left(  {\mathbb{R}^{n}}\right)  };
\]

$\left(  2\right)  $ If $p_{i}=1$ for some $i$, then%
\[
\left \Vert \overline{T}_{\alpha}^{\left(  m\right)  }\left(  \overrightarrow
{f}\right)  \right \Vert _{L_{q,\infty}\left(  {\mathbb{R}^{n}}\right)  }\leq C%
{\displaystyle \prod \limits_{i=1}^{m}}
\left \Vert f_{i}\right \Vert _{L_{p_{i}}\left(  {\mathbb{R}^{n}}\right)  },
\]
here $L_{q,\infty}\left(  {\mathbb{R}^{n}}\right)  $ denotes the weak
$L_{q}\left(  {\mathbb{R}^{n}}\right)  $ space, the constant $C>0$ independent
of $\overrightarrow{f}$.
\end{theorem}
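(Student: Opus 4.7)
The plan is to establish Theorem \ref{Teo-1} via a Hedberg-type pointwise majorization, from which both parts follow by standard mapping properties of the Hardy--Littlewood maximal operator $M$. The key structural input is that $\left\vert\left(x-y_{1},\ldots,x-y_{m}\right)\right\vert$ is equivalent to $\max_{i}\left\vert x-y_{i}\right\vert$, so any sublevel set of the former in $\left(\mathbb{R}^{n}\right)^{m}$ is sandwiched between products of balls centered at $x$.

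For fixed $x\in\mathbb{R}^{n}$ and $R>0$, I would split the defining integral of $\overline{T}_{\alpha}^{(m)}(\overrightarrow{f})(x)$ into a near part over $\left\{\left\vert\left(x-\overrightarrow{y}\right)\right\vert\leq R\right\}$ and a far part over its complement, and then dyadically decompose each. On the annulus where $\left\vert\left(x-\overrightarrow{y}\right)\right\vert\sim 2^{k}R$ the kernel is of size $(2^{k}R)^{\alpha-mn}$ and the region lies inside $\prod_{i=1}^{m}B(x, 2^{k}R)$, so the integral factorizes. Inside $R$, each coordinate integral yields a factor $|B(x, 2^{k}R)|\,Mf_{i}(x)$ and the geometric sum converges because $\alpha>0$, producing a bound $CR^{\alpha}\prod_{i}Mf_{i}(x)$. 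Outside $R$, applying H\"older's inequality coordinatewise with exponents $p_{i}$ produces factors $|B(x,2^{k}R)|^{1-1/p_{i}}\|f_{i}\|_{L_{p_{i}}}$ per $i$, and the geometric sum converges because $1/p=\sum_{i}1/p_{i}>\alpha/n$, yielding $CR^{\alpha-n/p}\prod_{i}\|f_{i}\|_{L_{p_{i}}}$. Optimizing over $R$ and using the identity $1-\alpha p/n=p/q$ delivers the pointwise Hedberg-type bound
\[
\overline{T}_{\alpha}^{(m)}(\overrightarrow{f})(x)\leq C\Bigl(\prod_{i=1}^{m}\|f_{i}\|_{L_{p_{i}}}\Bigr)^{\alpha p/n}\Bigl(\prod_{i=1}^{m}Mf_{i}(x)\Bigr)^{p/q}.
\]

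For part $(1)$, I would raise this inequality to the $q$-th power, integrate in $x$, and apply the multilinear H\"older inequality with exponents $p_{i}/p$ (which sum to one since $\sum_{i}p/p_{i}=1$); each factor $\|Mf_{i}\|_{L_{p_{i}}}^{p}$ is then controlled by $C\|f_{i}\|_{L_{p_{i}}}^{p}$ via the classical $L_{p_{i}}$-boundedness of $M$ (valid since $p_{i}>1$), and the exponents collapse to the announced strong-type estimate. For part $(2)$, the same pointwise estimate reduces the distribution function of $\overline{T}_{\alpha}^{(m)}(\overrightarrow{f})$ to a level set of $\prod_{i}Mf_{i}$; I would control this using a H\"older-type inequality in weak-$L$ spaces combined with the weak-type $(p_{i},p_{i})$-boundedness of $M$, which still holds when some $p_{i}=1$, and the identity $1+\alpha q/n=q/p$ then produces the desired weak-$L_{q}$ conclusion.

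The main obstacle, as is typical in Hedberg-style arguments, is the exponent bookkeeping across the near/far split: the dyadic decomposition must be organized so that both geometric sums converge, and the optimal $R$ must produce precisely the announced Hedberg inequality. Once that inequality is in hand, both conclusions become a fairly mechanical application of known mapping properties of $M$, so this is the decisive step on which I would focus the verification.
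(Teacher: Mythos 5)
There is nothing in the paper to compare your argument against: Theorem \ref{Teo-1} is quoted from Kenig and Stein \cite{Kenig} and used as a black box (its only role is to verify the $L_{p_1}\times\cdots\times L_{p_m}\to L_q$ and weak-$L_q$ hypotheses of Theorem \ref{teo9} so as to deduce Corollary \ref{corollaryA}); the paper contains no proof of it. Judged on its own, your Hedberg-type scheme is a correct and standard route to the result, and the exponent bookkeeping you identify as the decisive step does close: on the annulus $\left\vert\left(x-\overrightarrow{y}\right)\right\vert\sim 2^{k}R$ the near-part sum converges because $\alpha>0$, the far-part sum converges because $\frac{n}{p}>\alpha$ (which is exactly $\frac{1}{q}>0$), the optimal $R$ produces the stated pointwise bound via $1-\frac{\alpha p}{n}=\frac{p}{q}$, and in part $(1)$ the final exponents collapse since $\frac{\alpha pq}{n}+p=q$. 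Two points should be made explicit in a full write-up. First, in part $(2)$ the ``H\"older-type inequality in weak-$L$ spaces'' you invoke is the estimate $\Vert\prod_{i}g_{i}\Vert_{L_{p,\infty}}\leq C\prod_{i}\Vert g_{i}\Vert_{L_{p_{i},\infty}}$ with $\frac{1}{p}=\sum_{i}\frac{1}{p_{i}}$; this is true and standard, but it is the one nontrivial ingredient beyond the weak $(1,1)$ bound for $M$, so it deserves a proof or a precise citation. Second, the optimization over $R$ needs the routine caveats that the claimed pointwise bound is trivially valid when $\prod_{i}Mf_{i}(x)$ is $0$ or $+\infty$, and that indices with $p_{i}=\infty$ are accommodated by reading $\vert B\vert^{1-1/p_{i}}$ as $\vert B\vert$ (the hypothesis $\frac{1}{q}>0$ guarantees not all $p_{i}$ are infinite). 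With those details filled in, your argument is a complete and self-contained proof of the cited theorem.
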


If we take $m=1$, $\overline{T}_{\alpha}^{\left(  m\right)  }$ $\left(  m\in%
\mathbb{N}
\right)  $ is the classical fractional integral operator $\overline{T}%
_{\alpha}$. Moreover, Theorem \ref{Teo-1} is the multi-version of well-known
Hardy-Littlewood-Sobolev inequality. Weighted inequalities for the multilinear
fractional integral operators have been established by Moen \cite{Moen} and
Chen and Xue \cite{Chen}.

Xu \cite{Xu} has established the boundedness of the commutators generated by
$m$-linear {Calder\'{o}n-Zygmund }singular integrals and $RBMO$ functions with
nonhomogeneity on the{\ product of Lebesgue space}. Inspired by
\cite{Grafakos1}, \cite{Grafakos3*}, \cite{Xu}, commutators $\overline
{T}_{\overrightarrow{b}}^{\left(  m\right)  }$ generated by $m$-linear
{Calder\'{o}n-Zygmund operators }$\overline{T}^{\left(  m\right)  }$ {and
local Campanato functions }$\overrightarrow{b}=\left(  b_{1},\ldots
,b_{m}\right)  ${\ is given by}%
\[
\overline{T}_{\overrightarrow{b}}^{\left(  m\right)  }\left(  \overrightarrow
{f}\right)  \left(  x\right)  =%
{\displaystyle \int \limits_{\left(  {\mathbb{R}^{n}}\right)  ^{m}}}
K\left(  x,y_{1},\ldots,y_{m}\right)  \left[
{\displaystyle \prod \limits_{i=1}^{m}}
\left[  b_{i}\left(  x\right)  -b_{i}\left(  y_{i}\right)  \right]
f_{i}\left(  y_{i}\right)  \right]  d\overrightarrow{y},
\]
where $K\left(  x,y_{1},\ldots,y_{m}\right)  $ is a $m$-linear
{Calder\'{o}n-Zygmund kernel, }$b_{i}\in LC_{q_{i},\lambda_{i}}^{\left \{
x_{0}\right \}  }({\mathbb{R}^{n}})$ (local Campanato spaces, see for
definition in Section 4) for $0\leq \lambda_{i}<\frac{1}{n}$, $i=1,\ldots,m $.
Note that $\overline{T}_{b}$ is the special case of $\overline{T}%
_{\overrightarrow{b}}^{\left(  m\right)  }$ with taking $m=1$. Similarly, let
$b_{i}\left(  i=1,\ldots,m\right)  $ be a locally integrable functions on
${\mathbb{R}^{n}} $, then the commutators generated by $m$-linear fractional
integral operators and $\overrightarrow{b}=\left(  b_{1},\ldots,b_{m}\right)
$ is given by%
\[
\overline{T}_{\alpha,\overrightarrow{b}}^{\left(  m\right)  }\left(
\overrightarrow{f}\right)  \left(  x\right)  =%
{\displaystyle \int \limits_{\left(  {\mathbb{R}^{n}}\right)  ^{m}}}
\frac{1}{\left \vert \left(  x-y_{1},\ldots,x-y_{m}\right)  \right \vert
^{mn-\alpha}}\left[
{\displaystyle \prod \limits_{i=1}^{m}}
\left[  b_{i}\left(  x\right)  -b_{i}\left(  y_{i}\right)  \right]
f_{i}\left(  y_{i}\right)  \right]  d\overrightarrow{y},
\]
where $0<\alpha<mn$, and $f_{i}$ $\left(  i=1,\ldots,m\right)  $ are suitable functions.

Suppose that $T_{\alpha}^{\left(  m\right)  }$, $\alpha \in \left(  0,mn\right)
$ represents a multilinear or a multi-sublinear operator, which satisfies that
for any $m\in%
\mathbb{N}
,$ $\overrightarrow{f}=\left(  f_{1},\ldots,f_{m}\right)  $, and $x\notin%
{\displaystyle \bigcap \limits_{i=1}^{m}}
suppf_{i},$%
\begin{equation}
\left \vert T_{\alpha}^{\left(  m\right)  }\left(  \overrightarrow{f}\right)
\left(  x\right)  \right \vert \leq c_{0}%
{\displaystyle \int \limits_{\left(  {\mathbb{R}^{n}}\right)  ^{m}}}
\frac{1}{\left \vert \left(  x-y_{1},\ldots,x-y_{m}\right)  \right \vert
^{mn-\alpha}}\left \{
{\displaystyle \prod \limits_{i=1}^{m}}
\left \vert f_{i}\left(  y_{i}\right)  \right \vert \right \}  d\overrightarrow
{y}, \label{4}%
\end{equation}
where $c_{0}$ is independent of $\overrightarrow{f},$ $x$ and each $f_{i}$
$\left(  i=1,\ldots,m\right)  $ is integrable on ${\mathbb{R}^{n}}$ with
compact support.

Condition (\ref{4}) in the case of $m=1$ was first introduced by Soria and
Weiss in \cite{SW} and is satisfied by many interesting operators in harmonic
analysis, such as the $m$-linear fractional integral operator, multi-sublinear
fractional maximal operator and so on (see \cite{LLY}, \cite{ShiTao},
\cite{SW} for details).

In this paper we will establish the boundedness of a large class of
multi-sublinear operators, including multi-sublinear fractional maximal
operators, with multilinear{\ }fractional integral{\ operators as their
special cases, and give local Campanato space estimates for commutators }on
the product generalized local Morrey spaces.

At last, throughout the paper we use the letter $C$ for a positive constant,
independent of appropriate parameters and not necessarily the same at each
occurrence. By $A\lesssim B$ we mean that $A\leq CB$ with some positive
constant $C$ independent of appropriate quantities. If $A\lesssim B$ and
$B\lesssim A$, we write $A\approx B$ and say that $A$ and $B$ are equivalent.

\section{Generalized local Morrey spaces}

After studying Morrey spaces in detail, researchers have passed to generalized
Morrey spaces. Mizuhara \cite{Miz} has given generalized Morrey spaces
$M_{p,\varphi}$ considering $\varphi=\varphi \left(  r\right)  $ instead of
$r^{\lambda}$ in the above definition of the Morrey space. Later, Guliyev
\cite{GulJIA} has defined the generalized Morrey spaces $M_{p,\varphi}$ with
normalized norm as follows:

\begin{definition}
\cite{GulJIA}\textbf{\ }Let $\varphi(x,r)$ be a positive measurable function
on ${\mathbb{R}^{n}}\times(0,\infty)$ and $1\leq p<\infty$. $M_{p,\varphi
}\equiv M_{p,\varphi}({\mathbb{R}^{n}})$ denotes the generalized Morrey space,
the space of all functions $f\in L_{p}^{loc}({\mathbb{R}^{n}})$ with finite
quasinorm
\[
\Vert f\Vert_{M_{p,\varphi}}=\sup \limits_{x\in{\mathbb{R}^{n}},r>0}%
\varphi(x,r)^{-1}|B(x,r)|^{-\frac{1}{p}}\Vert f\Vert_{L_{p}(B(x,r))}<\infty.
\]
Also $WM_{p,\varphi}\equiv WM_{p,\varphi}({\mathbb{R}^{n}})$ denotes the weak
generalized Morrey space of all functions $f\in WL_{p}^{loc}({\mathbb{R}^{n}%
})$ for which
\[
\Vert f\Vert_{WM_{p,\varphi}}=\sup \limits_{x\in{\mathbb{R}^{n}},r>0}%
\varphi(x,r)^{-1}|B(x,r)|^{-\frac{1}{p}}\Vert f\Vert_{WL_{p}(B(x,r))}<\infty.
\]

\end{definition}

Everywhere in the sequel we assume that $\inf \limits_{x\in{\mathbb{R}^{n}%
},r>0}\varphi(x,r)>0$ which makes the above spaces non-trivial, since the
spaces of bounded functions are contained in these spaces.

In \cite{GulJIA}, \cite{Gurbuz5}, \cite{Lin}, \cite{Miz}, \cite{Nakai} and
\cite{Softova}, the boundedness of the maximal operator and
Calder\'{o}n-Zygmund operator on the generalized Morrey spaces has been
obtained, respectively. For generalized Morrey spaces with nondoubling
measures see also \cite{Sawano}.

There are many papers discussing the conditions on $\varphi(x,r)$ to obtain
the boundedness of operators on the generalized Morrey spaces. For example, in
\cite{Nakai}, the following condition has been imposed on $\varphi(x,r)$:%
\begin{equation}
c^{-1}\varphi(x,r)\leq \varphi(x,t)\leq c\, \varphi(x,r), \label{a}%
\end{equation}
whenever $r\leq t\leq2r$, where $c~(\geq1)$ does not depend on $t$, $r$ and
$x\in{\mathbb{R}^{n}}$, jointly with the condition:
\begin{equation}
\int \limits_{r}^{\infty}\varphi(x,t)^{p}\frac{dt}{t}\leq C\varphi(x,r)^{p},
\label{b}%
\end{equation}
for some operators $T$ satisfying the condition (\ref{4}) (by taking $m=1$
there), where $C$ does not depend on $x\in{\mathbb{R}^{n}}$ and $r$.

In \cite{DingYZ} the boundedness of sublinear operators satisfying condition
(\ref{4}) (by taking $m=1$ there) has been proved. For the properties of
generalized Morrey spaces $M_{p,\varphi}$, see also \cite{GulJIA, Gurbuz5,
Softova,}.

Recall that the concept of the generalized local (central) Morrey space
$LM_{p,\varphi}^{\{x_{0}\}}$ has been introduced in \cite{BGGS} and studied in
\cite{Gurbuz1, Gurbuz2, Gurbuz3}.

\begin{definition}
\label{Definition2}\textbf{\ }Let $\varphi(x,r)$ be a positive measurable
function on ${\mathbb{R}^{n}}\times(0,\infty)$ and $1\leq p<\infty$. For any
fixed $x_{0}\in{\mathbb{R}^{n}}$ we denote by $LM_{p,\varphi}^{\{x_{0}%
\}}\equiv LM_{p,\varphi}^{\{x_{0}\}}({\mathbb{R}^{n}})$ the generalized local
Morrey space, the space of all functions $f\in L_{p}^{loc}({\mathbb{R}^{n}})$
with finite quasinorm
\[
\Vert f\Vert_{LM_{p,\varphi}^{\{x_{0}\}}}=\sup \limits_{r>0}\varphi
(x_{0},r)^{-1}|B(x_{0},r)|^{-\frac{1}{p}}\Vert f\Vert_{L_{p}(B(x_{0}%
,r))}<\infty.
\]
Also by $WLM_{p,\varphi}^{\{x_{0}\}}\equiv WLM_{p,\varphi}^{\{x_{0}%
\}}({\mathbb{R}^{n}})$ the weak generalized local Morrey space of all
functions $f\in WL_{p}^{loc}({\mathbb{R}^{n}})$ for which
\[
\Vert f\Vert_{WLM_{p,\varphi}^{\{x_{0}\}}}=\sup \limits_{r>0}\varphi
(x_{0},r)^{-1}|B(x_{0},r)|^{-\frac{1}{p}}\Vert f\Vert_{WL_{p}(B(x_{0}%
,r))}<\infty.
\]

\end{definition}

As in the above, everywhere in the sequel we assume that $\inf \limits_{r>0}%
\varphi(x_{0},r)>0$ for the same reasons.

According to this definition, we recover the local Morrey space $LM_{p,\lambda
}^{\{x_{0}\}}$ and the weak local Morrey space $WLM_{p,\lambda}^{\{x_{0}\}}$
under the choice $\varphi(x_{0},r)=r^{\frac{\lambda-n}{p}}$:
\[
LM_{p,\lambda}^{\{x_{0}\}}=LM_{p,\varphi}^{\{x_{0}\}}\mid_{\varphi
(x_{0},r)=r^{\frac{\lambda-n}{p}}},~~~~~~WLM_{p,\lambda}^{\{x_{0}%
\}}=WLM_{p,\varphi}^{\{x_{0}\}}\mid_{\varphi(x_{0},r)=r^{\frac{\lambda-n}{p}}%
}.
\]

The main goal of \cite{BGGS, Gurbuz1, Gurbuz2, Gurbuz3} is to give some
sufficient conditions for the boundedness of a large class of rough sublinear
operators and their commutators on the generalized local Morrey space
$LM_{p,\varphi}^{\{x_{0}\}}$. For the properties and applications of
generalized local Morrey spaces $LM_{p,\varphi}^{\{x_{0}\}}$, see \cite{BGGS,
Gurbuz1, Gurbuz2, Gurbuz3}.

Furthermore, we have the following embeddings:%
\[
M_{p,\varphi}\subset LM_{p,\varphi}^{\{x_{0}\}},\qquad \Vert f\Vert
_{LM_{p,\varphi}^{\{x_{0}\}}}\lesssim \Vert f\Vert_{M_{p,\varphi}},
\]%
\[
WM_{p,\varphi}\subset WLM_{p,\varphi}^{\{x_{0}\}},\qquad \Vert f\Vert
_{WLM_{p,\varphi}^{\{x_{0}\}}}\lesssim \Vert f\Vert_{WM_{p,\varphi}}.
\]

Now, we can give $\lambda$-central bounded mean oscillation space's historical development.

Wiener \cite{Wiener1, Wiener2} has looked for a way to describe the behavior
of a function at the infinity. The conditions which he has considered are
related to appropriate weighted $L_{q}$ spaces. Beurling \cite{Beurl} has
extended this idea and has defined a pair of dual Banach spaces $A_{q}$ and
$B_{q^{\prime}}$, where $1/q+1/q^{\prime}=1$. To be precise, $A_{q}$ is a
Banach algebra with respect to the convolution, expressed as a union of
certain weighted $L_{q}$ spaces; the space $B_{q^{\prime}}$ is expressed as
the intersection of the corresponding weighted $L_{q^{\prime}}$ spaces.
Feichtinger \cite{Feicht} has observed that the space $B_{q}$ can be described
by
\begin{equation}
\left \Vert f\right \Vert _{B_{q}}=\sup_{k\geq0}2^{-\frac{kn}{q}}\Vert f\chi
_{k}\Vert_{L_{q}({\mathbb{R}^{n}})}<\infty, \label{e21}%
\end{equation}
where $\chi_{0}$ is the characteristic function of the unit ball
$\{x\in{\mathbb{R}^{n}}:|x|\leq1\}$, $\chi_{k}$ is the characteristic function
of the annulus $\{x\in{\mathbb{R}^{n}}:2^{k-1}<|x|\leq2^{k}\}$, $k=1,2,\ldots
$. By duality, the space $A_{q}({\mathbb{R}^{n}})$, appropriately called now
the Beurling algebra , can be described by the condition%

\begin{equation}
\left \Vert f\right \Vert _{A_{q}}=\sum \limits_{k=0}^{\infty}2^{-\frac
{kn}{q^{\prime}}}\Vert f\chi_{k}\Vert_{L_{q}({\mathbb{R}^{n}})}<\infty.
\label{e22}%
\end{equation}

Let $\dot{B}_{q}({\mathbb{R}^{n}})$ and $\dot{A}_{q}({\mathbb{R}^{n}})$ be the
homogeneous versions of $B_{q}({\mathbb{R}^{n}})$ and $A_{q}({\mathbb{R}^{n}%
})$ by taking $k\in \mathbb{Z}$ in (\ref{e21}) and (\ref{e22}) instead of
$k\geq0$ there (see \cite{FuLinLu} for details).

If $\lambda<0$ or $\lambda>n$, then $LM_{p,\lambda}^{\{x_{0}\}}({\mathbb{R}%
^{n}})={\Theta}$. Note that $LM_{p,0}({\mathbb{R}^{n}})=L_{p}({\mathbb{R}^{n}%
})$ and $LM_{p,n}({\mathbb{R}^{n}})=\dot{B}_{p}({\mathbb{R}^{n}})$.
\[
\dot{B}_{p,\mu}=LM_{p,\varphi}\mid_{\varphi(0,r)=r^{\mu n}},~~~~~~W\dot
{B}_{p,\mu}=WLM_{p,\varphi}\mid_{\varphi(0,r)=r^{\mu n}}.
\]

Alvarez et al. \cite{AlvLanLakey}, in order to study the relationship between
central $BMO$ spaces and Morrey spaces, introduced $\lambda$-central bounded
mean oscillation spaces and central Morrey spaces $\dot{B}_{p,\mu}%
({\mathbb{R}^{n}})\equiv LM_{p,n+np\mu}({\mathbb{R}^{n}})$, $\mu \in
\lbrack-\frac{1}{p},0]$. If $\mu<-\frac{1}{p}$ or $\mu>0$, then $\dot
{B}_{p,\mu}({\mathbb{R}^{n}})={\Theta}$. Note that $\dot{B}_{p,-\frac{1}{p}%
}({\mathbb{R}^{n}})=L_{p}({\mathbb{R}^{n}})$ and $\dot{B}_{p,0}({\mathbb{R}%
^{n}})=\dot{B}_{p}({\mathbb{R}^{n}})$. Also define the weak central Morrey
spaces $W\dot{B}_{p,\mu}({\mathbb{R}^{n}})\equiv WLM_{p,n+np\mu}%
({\mathbb{R}^{n}})$.

The following lemma, useful in itself, shows that the quasi-norm of the local
Morrey spaces $LL_{p,\lambda}^{\{x_{0}\}}$, $\lambda \geq0$ is equivalent to
the quasi-norm $\dot{B}_{p,\lambda}^{\{x_{0}\}}({\mathbb{R}^{n}})$:%
\[
\left \Vert f\right \Vert _{\dot{B}_{p,\lambda}^{\{x_{0}\}}}=\sup_{k\in%
\mathbb{Z}
}2^{-\frac{k\lambda}{p}}\Vert f\chi_{k}\Vert_{L_{p}({\mathbb{R}^{n}})}%
<\infty,
\]
where $\chi_{k}$ is the characteristic function of the annulus $B\left(
x_{0},2^{k}\right)  \setminus B\left(  x_{0},2^{k-1}\right)  $, $k\in%
\mathbb{Z}
$.

\begin{lemma}
For $0<p\leq \infty$ and $\lambda \geq0$, the quasi-norm $\left \Vert
f\right \Vert _{LL_{p,\lambda}^{\{x_{0}\}}}$ is equivalent to the quasi-norm
$\left \Vert f\right \Vert _{\dot{B}_{p,\lambda}^{\{x_{0}\}}}$.
\end{lemma}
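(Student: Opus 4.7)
The plan is to prove both inequalities by matching the continuous scale $r>0$ to the dyadic scale $2^k$ ($k\in\mathbb{Z}$), the two norms being related via the obvious partition $B(x_0,2^k)=\bigcup_{j\le k}A_j$ (modulo $\{x_0\}$), where $A_j:=B(x_0,2^j)\setminus B(x_0,2^{j-1})$ is the support of $\chi_j$.

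For the easy direction $\|f\|_{\dot B_{p,\lambda}^{\{x_0\}}}\lesssim \|f\|_{LL_{p,\lambda}^{\{x_0\}}}$, I would fix $k\in\mathbb{Z}$ and note that $\mathrm{supp}(f\chi_k)\subset B(x_0,2^k)$, so that trivially $\|f\chi_k\|_{L_p(\mathbb{R}^n)}\le \|f\|_{L_p(B(x_0,2^k))}\le (2^k)^{\lambda/p}\|f\|_{LL_{p,\lambda}^{\{x_0\}}}$ by applying the definition of the local Morrey quasi-norm at radius $r=2^k$. Multiplying by $2^{-k\lambda/p}$ and taking the supremum over $k\in\mathbb{Z}$ finishes this half.

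For the reverse direction, given $r>0$ I would choose $k=k(r)\in\mathbb{Z}$ with $2^{k-1}<r\le 2^k$, so that $B(x_0,r)\subset B(x_0,2^k)$. For $p<\infty$ this yields, by countable additivity of $L_p$ over the disjoint annuli,
\[
\|f\|_{L_p(B(x_0,r))}^{p}\;\le\;\sum_{j\le k}\|f\chi_j\|_{L_p(\mathbb{R}^n)}^{p}\;\le\;\|f\|_{\dot B_{p,\lambda}^{\{x_0\}}}^{p}\sum_{j\le k}2^{j\lambda}.
\]
The geometric series evaluates to $2^{k\lambda}/(1-2^{-\lambda})$ when $\lambda>0$, yielding $\|f\|_{L_p(B(x_0,r))}\le C_\lambda\,2^{k\lambda/p}\|f\|_{\dot B_{p,\lambda}^{\{x_0\}}}\le C_\lambda\,(2r)^{\lambda/p}\|f\|_{\dot B_{p,\lambda}^{\{x_0\}}}$ since $2^k<2r$. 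Multiplying by $r^{-\lambda/p}$ and taking the supremum over $r>0$ gives the desired bound. For $p=\infty$ I would replace the $\ell^p$-sum by an $\ell^\infty$-supremum over $j\le k$ and argue analogously, with the factor $2^{j\lambda/p}$ interpreted as $1$ in the natural way or bounded termwise.

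The only genuinely subtle point is the boundary case $\lambda=0$ (in the reverse direction only), where the geometric series degenerates to $\sum_{j\le k}1=\infty$; there the claimed equivalence fails in general. I would therefore treat the nontrivial content of the lemma under the hypothesis $\lambda>0$ and dispatch $\lambda=0$ by the remark that $LL_{p,0}^{\{x_0\}}=L_p(\mathbb{R}^n)$ already by inspection. Apart from this caveat, the argument is a routine dyadic decomposition; the main bookkeeping is ensuring the constant $(1-2^{-\lambda})^{-1}\cdot 2^{\lambda/p}$ is handled uniformly and independently of $f$.
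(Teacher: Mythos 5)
Your proof is correct and follows essentially the same route as the paper's: the easy direction by restricting the supremum to dyadic radii, and the reverse direction by decomposing $B(x_0,2^k)$ into the annuli $A_j$ and summing the geometric series $\sum_{j\le k}2^{j\lambda}=2^{k\lambda}/(1-2^{-\lambda})$, arriving at the same constant $2^{\lambda/p}(1-2^{-\lambda})^{-1/p}$. Your caveat about $\lambda=0$ is well taken — the paper's own proof has exactly the same degeneracy there, since its constant also blows up as $\lambda\to 0$ while the lemma is stated for $\lambda\ge 0$ — so your explicit flagging of this is, if anything, an improvement on the paper's presentation.
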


\begin{proof}
Let $0<p\leq \infty$, $\lambda \geq0$ and $f\in LL_{p,\lambda}^{\{x_{0}%
\}}\left(  {\mathbb{R}^{n}}\right)  $. Then, it follows that%
\[
\left \Vert f\right \Vert _{\dot{B}_{p,\lambda}^{\{x_{0}\}}}\leq \sup_{k\in%
\mathbb{Z}
}\left(  2^{k}\right)  ^{-\frac{\lambda}{p}}\Vert f\Vert_{L_{p}\left(
B\left(  x_{0},2^{k}\right)  \right)  }\leq \sup_{r>0}r^{-\frac{\lambda}{p}%
}\Vert f\Vert_{L_{p}\left(  B\left(  x_{0},r\right)  \right)  }=\left \Vert
f\right \Vert _{LL_{p,\lambda}^{\{x_{0}\}}}.
\]
On the other hand, for $0<p<\infty$, we get%
\begin{align*}
\left \Vert f\right \Vert _{LL_{p,\lambda}^{\{x_{0}\}}}^{p}  &  =\sup_{k\in%
\mathbb{Z}
}\sup_{2^{k-1}<r\leq2^{k}}r^{-\lambda}%
{\displaystyle \int \limits_{B\left(  x_{0},r\right)  }}
\left \vert f\left(  y\right)  \right \vert ^{p}dy\\
&  \leq2^{\lambda}\sup_{k\in%
\mathbb{Z}
}\left(  2^{k}\right)  ^{-\lambda}%
{\displaystyle \int \limits_{B\left(  x_{0},2^{k}\right)  }}
\left \vert f\left(  y\right)  \right \vert ^{p}dy\\
&  =2^{\lambda}\sup_{k\in%
\mathbb{Z}
}2^{-k\lambda}%
{\displaystyle \sum \limits_{d=-\infty}^{k}}
2^{d\lambda}2^{-d\lambda}%
{\displaystyle \int \limits_{B\left(  x_{0},2^{d}\right)  \setminus B\left(
x_{0},2^{d-1}\right)  }}
\left \vert f\left(  y\right)  \right \vert ^{p}dy\\
&  \leq2^{\lambda}\left(  \sup_{d\in%
\mathbb{Z}
}2^{-d\lambda}%
{\displaystyle \int \limits_{B\left(  x_{0},2^{d}\right)  \setminus B\left(
x_{0},2^{d-1}\right)  }}
\left \vert f\left(  y\right)  \right \vert ^{p}dy\right)  \left(  \sup_{k\in%
\mathbb{Z}
}2^{-k\lambda}%
{\displaystyle \sum \limits_{d=-\infty}^{k}}
2^{d\lambda}\right) \\
&  =\frac{2^{\lambda}}{1-2^{-\lambda}}\left \Vert f\right \Vert _{\dot
{B}_{p,\lambda}^{\{x_{0}\}}}^{p}.
\end{align*}
Thus, for $0<p<\infty$, we have%
\[
\left \Vert f\right \Vert _{LL_{p,\lambda}^{\{x_{0}\}}}\leq2^{\frac{\lambda}{p}%
}\left(  1-2^{-\lambda}\right)  ^{-\frac{1}{p}}\left \Vert f\right \Vert
_{\dot{B}_{p,\lambda}^{\{x_{0}\}}}.
\]
Also, for $p=\infty$ we have%
\[
\left \Vert f\right \Vert _{LL_{\infty,\lambda}^{\{x_{0}\}}}\leq \left \Vert
f\right \Vert _{\dot{B}_{\infty,\lambda}^{\{x_{0}\}}}.
\]

\end{proof}

In the case of $\lambda=n$, the quasi-norms $\left \Vert f\right \Vert _{\dot
{B}_{p,\lambda}^{\{x_{0}\}}}$ have been considered by Beurling \cite{Beurl}
and Feichtinger \cite{Feicht}.

Closely related to the above results, in this paper we prove the boundedness
of the multi-sublinear operators $T_{\alpha}^{\left(  m\right)  }$ $\left(
m\in%
\mathbb{N}
\right)  $, $\alpha \in \left(  0,mn\right)  $ satisfying condition (\ref{4})
from product generalized local Morrey space $LM_{p_{1},\varphi_{1}}%
^{\{x_{0}\}}\times \cdots \times LM_{p_{m},\varphi_{m}}^{\{x_{0}\}}$ to
$LM_{q,\varphi}^{\{x_{0}\}}$, if $1<p_{1},\ldots,p_{m}<\infty$, $\frac{1}{p}=%
{\displaystyle \sum \limits_{i=1}^{m}}
\frac{1}{p_{i}}$, $\frac{1}{q_{i}}=\frac{1}{p_{i}}-\frac{\alpha}{mn}$ and
$\frac{1}{q}=%
{\displaystyle \sum \limits_{i=1}^{m}}
\frac{1}{q_{i}}=\frac{1}{p}-\frac{\alpha}{n}$, and from the space
$LM_{p_{1},\varphi_{1}}^{\{x_{0}\}}\times \cdots \times LM_{p_{m},\varphi_{m}%
}^{\{x_{0}\}}$ to the weak space $WLM_{q,\varphi}^{\{x_{0}\}}$, if $1\leq
p_{1},\ldots,p_{m}<\infty$, $\frac{1}{p}=%
{\displaystyle \sum \limits_{i=1}^{m}}
\frac{1}{p_{i}}$, $\frac{1}{q_{i}}=\frac{1}{p_{i}}-\frac{\alpha}{mn}$,
$\frac{1}{q}=%
{\displaystyle \sum \limits_{i=1}^{m}}
\frac{1}{q_{i}}=\frac{1}{p}-\frac{\alpha}{n}$ and at least one exponent
$p_{i}$ $\left(  i=1,\ldots,m\right)  $ equals $1$. In the case of $b_{i}\in
LC_{q_{i},\lambda_{i}}^{\left \{  x_{0}\right \}  }({\mathbb{R}^{n}})$ for
$0\leq \lambda_{i}<\frac{1}{n}$, $i=1,\ldots,m$, we find the sufficient
conditions on $(\varphi_{1},\ldots,\varphi_{m},\varphi)$ which ensures the
boundedness of the commutator operators $T_{\alpha,\overrightarrow{b}%
}^{\left(  m\right)  }$ from $LM_{p_{1},\varphi_{1}}^{\{x_{0}\}}\times
\cdots \times LM_{p_{m},\varphi_{m}}^{\{x_{0}\}}$ to $LM_{q,\varphi}%
^{\{x_{0}\}}$, where $\frac{1}{q}=\sum \limits_{i=1}^{m}\frac{1}{p_{i}}%
+\sum \limits_{i=1}^{m}\frac{1}{q_{i}}-\frac{\alpha}{n}$. In fact, in this
paper the results of \cite{Gurbuz1, Gurbuz2, Gurbuz3} (by taking $\Omega
\equiv1$ there) will be generalized to the multilinear case; we omit the
details here. But, the techniques and non-trivial estimates which have been
used in the proofs of our main results are quite different from \cite{Gurbuz1,
Gurbuz2}. For example, using inequality about the weighted Hardy operator
$H_{w}$ in \cite{Gurbuz1, Gurbuz2}, in this paper we will only use the
following relationship between essential supremum and essential infimum%
\begin{equation}
\left(  \operatorname*{essinf}\limits_{x\in E}f\left(  x\right)  \right)
^{-1}=\operatorname*{esssup}\limits_{x\in E}\frac{1}{f\left(  x\right)  },
\label{5}%
\end{equation}
where $f$ is any real-valued nonnegative function and measurable on $E$ (see
\cite{Wheeden-Zygmund}, page 143).

\begin{remark}
Our results in this paper remain true for the inhomogeneous versions of local
Campanato spaces $LC_{q,\lambda}^{\left \{  x_{0}\right \}  }\left(
{\mathbb{R}^{n}}\right)  $ for $0\leq \lambda<\frac{1}{n}$ and generalized
local Morrey spaces $LM_{p,\varphi}^{\{x_{0}\}}$.
\end{remark}

\section{Boundedness of the multi-sublinear operators $T_{\alpha}^{\left(
m\right)  }$ $\left(  m\in%
\mathbb{N}
\right)  $ on the product spaces $LM_{p_{1},\varphi_{1}}^{\{x_{0}\}}%
\times \cdots \times LM_{p_{m},\varphi_{m}}^{\{x_{0}\}}$}

In this section we prove the boundedness of the operator $T_{\alpha}^{\left(
m\right)  }$ $\left(  m\in%
\mathbb{N}
\right)  $, $\alpha \in \left(  0,mn\right)  $ satisfying condition (\ref{4}) on
the product generalized local Morrey spaces $LM_{p_{1},\varphi_{1}}%
^{\{x_{0}\}}\times \cdots \times LM_{p_{m},\varphi_{m}}^{\{x_{0}\}}$ by using
(\ref{5}) and the following Theorem \ref{Teo2}.

We first prove the following Theorem \ref{Teo2}.

\begin{theorem}
\label{Teo2}Let $x_{0}\in{\mathbb{R}^{n}}$, $0<\alpha<mn$ and $1\leq
p_{i}<\frac{mn}{\alpha}$ with $\frac{1}{p}=%
{\displaystyle \sum \limits_{i=1}^{m}}
\frac{1}{p_{i}}$, $\frac{1}{q_{i}}=\frac{1}{p_{i}}-\frac{\alpha}{mn}$ and
$\frac{1}{q}=%
{\displaystyle \sum \limits_{i=1}^{m}}
\frac{1}{q_{i}}=\frac{1}{p}-\frac{\alpha}{n}$. Let $T_{\alpha}^{\left(
m\right)  }$ $\left(  m\in%
\mathbb{N}
\right)  $ be a multi-sublinear operator satisfying condition (\ref{4}),
bounded from $L_{p_{1}}\times \cdots \times L_{p_{m}}$ into $L_{q}$ for
$p_{i}>1$ $\left(  i=1,\ldots,m\right)  $, and bounded from $L_{p_{1}}%
\times \cdots \times L_{p_{m}}$ into the weak space $WL_{q}$, and at least one
exponent $p_{i}$ $\left(  i=1,\ldots,m\right)  $ equals $1$.

Then, for $p_{i}>1$ $\left(  i=1,\ldots,m\right)  $ the inequality
\begin{equation}
\left \Vert T_{\alpha}^{\left(  m\right)  }\left(  \overrightarrow{f}\right)
\right \Vert _{L_{q}\left(  B_{r}\right)  }\lesssim r^{\frac{n}{q}}%
\int \limits_{2r}^{\infty}%
{\displaystyle \prod \limits_{i=1}^{m}}
\left \Vert f_{i}\right \Vert _{L_{p_{i}}\left(  B\left(  x_{0},t\right)
\right)  }t^{-\frac{n}{q}-1}dt \label{f}%
\end{equation}
holds for any ball $B_{r}=B\left(  x_{0},r\right)  $ and for all
$\overrightarrow{f}\in L_{p_{1}}^{loc}\left(  {\mathbb{R}^{n}}\right)
\times \cdots \times L_{p_{m}}^{loc}\left(  {\mathbb{R}^{n}}\right)  $.

Moreover, if at least one exponent $p_{i}$ $\left(  i=1,\ldots,m\right)  $
equals $1$, the inequality%
\begin{equation}
\left \Vert T_{\alpha}^{\left(  m\right)  }\left(  \overrightarrow{f}\right)
\right \Vert _{WL_{q}\left(  B_{r}\right)  }\lesssim r^{\frac{n}{q}}%
\int \limits_{2r}^{\infty}%
{\displaystyle \prod \limits_{i=1}^{m}}
\left \Vert f_{i}\right \Vert _{L_{p_{i}}\left(  B\left(  x_{0},t\right)
\right)  }t^{-\frac{n}{q}-1}dt \label{f*}%
\end{equation}
holds for any ball $B_{r}=B\left(  x_{0},r\right)  $ and for all
$\overrightarrow{f}\in L_{p_{1}}^{loc}\left(  {\mathbb{R}^{n}}\right)
\times \cdots \times L_{p_{m}}^{loc}\left(  {\mathbb{R}^{n}}\right)  $.
\end{theorem}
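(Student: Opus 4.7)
The plan is to derive (\ref{f}) and (\ref{f*}) from a single decomposition argument that combines the two ingredients the hypothesis provides: the mapping property of $T_\alpha^{(m)}$ on products of Lebesgue spaces and the pointwise size bound (\ref{4}). Fix $B_r = B(x_0,r)$ and, for each $i$, split
\[
f_i = f_i^0 + f_i^\infty, \qquad f_i^0 = f_i\chi_{2B_r},\quad f_i^\infty = f_i - f_i^0.
\]
Using that $T_\alpha^{(m)}$ is sublinear in each slot, I obtain the pointwise domination
\[
|T_\alpha^{(m)}(\overrightarrow{f})(x)| \leq \sum_{(\alpha_1,\ldots,\alpha_m)\in\{0,\infty\}^m} |T_\alpha^{(m)}(f_1^{\alpha_1},\ldots,f_m^{\alpha_m})(x)|,
\]
and it then suffices to estimate each of the $2^m$ summands separately.

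For the all-local multi-index $(0,\ldots,0)$ I would apply the assumed $L_{p_1}\times\cdots\times L_{p_m}\to L_q$ (or $WL_q$) boundedness of $T_\alpha^{(m)}$ to get $\|T_\alpha^{(m)}(f_1^0,\ldots,f_m^0)\|_{L_q(B_r)} \lesssim \prod_i\|f_i\|_{L_{p_i}(2B_r)}$. To convert this product into the integral on the right of (\ref{f}), I would use the monotonicity $\|f_i\|_{L_{p_i}(2B_r)} \leq \|f_i\|_{L_{p_i}(B(x_0,t))}$ for $t\geq 2r$ together with the elementary identity $r^{n/q}\int_{2r}^\infty t^{-n/q-1}\,dt \sim 1$; this readily majorizes the product by the desired integral.

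For any non-zero multi-index let $S$ denote the set of indices with $\alpha_i=\infty$, so $f_i^{\alpha_i}$ is supported in $(2B_r)^c$ for $i\in S$ and in $2B_r$ otherwise. On this summand I would discard the operator structure and use only (\ref{4}), together with the layer-cake identity $|(x-y_1,\ldots,x-y_m)|^{-(mn-\alpha)} = (mn-\alpha)\int_{|(x-y_1,\ldots,x-y_m)|}^\infty s^{-(mn-\alpha)-1}\,ds$. After a Fubini interchange and the crude but sufficient inclusion $\{|(x-y_1,\ldots,x-y_m)|<s\}\subset (B(x,s))^m$, the multiple integral is controlled by an integral in $s$ whose integrand splits into a product of localized $L_1$ masses of $f_i$ over $B(x,s)$ intersected with the support of $f_i^{\alpha_i}$. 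H\"older's inequality bounds each mass by $\|f_i\|_{L_{p_i}(B(x_0,2s))}\,s^{n/p_i'}$ after noting that $B(x,s)\subset B(x_0,2s)$ whenever $x\in B_r$ and $s\geq r$; the contribution for $s<r$ vanishes because the constraint $y_j\in(2B_r)^c\cap B(x,s)$ with any $j\in S$ is empty. The arithmetic identity $\alpha-n/p-1=-n/q-1$ then collapses all surviving powers of $s$, and the substitution $t=2s$ delivers the uniform pointwise estimate
\[
|T_\alpha^{(m)}(f_1^{\alpha_1},\ldots,f_m^{\alpha_m})(x)| \lesssim \int_{2r}^\infty \prod_{i=1}^m \|f_i\|_{L_{p_i}(B(x_0,t))}\, t^{-n/q-1}\,dt \qquad (x\in B_r).
\]
Integrating in $x$ over $B_r$ inserts the factor $|B_r|^{1/q}\sim r^{n/q}$ required by (\ref{f}); for (\ref{f*}) the right-hand side is a constant in $x$, so the $WL_q(B_r)$ quasi-norm is also dominated by $r^{n/q}$ times the same integral.

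The main obstacle I anticipate is the non-local estimate in the previous step. Because no single coordinate dominates the Euclidean norm $|(x-y_1,\ldots,x-y_m)|$, simply throwing away all but one coordinate wastes too much and does not recover the sharp exponent $-n/q-1$. The delicate bookkeeping lies in arranging the layer-cake and Fubini exchange so that the joint region $\{|(x-y_1,\ldots,x-y_m)|<s\}$ is handled simultaneously while H\"older, applied coordinatewise, produces exponents $n/p_i'$ whose sum compensates $-(mn-\alpha)-1$ into exactly $-n/q-1$; this is where the hypothesis $1/q=\sum 1/p_i-\alpha/n$ must be used in an essential way.
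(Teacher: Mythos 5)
Your proposal is correct and follows the same global skeleton as the paper's proof: split each $f_i$ into $f_i^0=f_i\chi_{B_{2r}}$ and $f_i^\infty$, expand by multi-sublinearity into $2^m$ terms, dispatch the all-local term via the assumed $L_{p_1}\times\cdots\times L_{p_m}\to L_q$ (resp.\ $WL_q$) boundedness together with $1\approx r^{n/q}\int_{2r}^\infty t^{-n/q-1}\,dt$ and monotonicity of $t\mapsto\|f_i\|_{L_{p_i}(B(x_0,t))}$, and control every remaining term pointwise on $B_r$ by the right-hand side of (\ref{f}) using only condition (\ref{4}). Where you genuinely depart from the paper is in the treatment of the non-local terms. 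The paper (which writes out only $m=2$) handles the mixed term $T_\alpha^{(2)}(f_1^0,f_2^\infty)$ by discarding the near coordinate via $|(x_0-y_1,x_0-y_2)|\geq|x_0-y_2|$ and applying the layer-cake formula to the single factor $|x_0-y_2|^{-(2n-\alpha)}$, and handles the all-far term by the geometric-mean splitting $|(x_0-y_1,x_0-y_2)|^{2n-\alpha}\geq|x_0-y_1|^{n-\alpha/2}|x_0-y_2|^{n-\alpha/2}$ followed by a dyadic decomposition into annuli $B_{2^{j+1}r}\setminus B_{2^jr}$ and a sum-to-integral conversion. You instead run a single layer-cake/Fubini argument on the joint norm $|(x-y_1,\ldots,y_m)|$, use the inclusions $\{|(x-y_1,\ldots,x-y_m)|<s\}\subset(B(x,s))^m$ and $B(x,s)\subset B(x_0,2s)$ for $s\geq r$, and observe that the presence of at least one far coordinate forces $s>r$. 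This is cleaner: it treats all $2^m-1$ non-local multi-indices uniformly, works verbatim for every $m$ (the paper's case analysis would proliferate for $m>2$), and avoids the dyadic bookkeeping; the exponent arithmetic $\sum_i n/p_i'-(mn-\alpha)-1=-n/q-1$ that you isolate is exactly the cancellation the paper performs term by term. The only point worth making explicit in a full write-up is that condition (\ref{4}) is applicable to each non-local term because, for a non-zero multi-index, any $x\in B_r$ lies outside the support of at least one $f_i^\infty$, hence outside $\bigcap_i\operatorname{supp}f_i^{\alpha_i}$ --- the same tacit point arises in the paper's proof.
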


\begin{proof}
In order to simplify the proof, we consider only the situation when $m=2$.
Actually, a similar procedure works for all $m\in%
\mathbb{N}
$. Thus, without loss of generality, it is sufficient to show that the
conclusion holds for $T_{\alpha}^{\left(  2\right)  }\left(  \overrightarrow
{f}\right)  =T_{\alpha}^{\left(  2\right)  }\left(  f_{1},f_{2}\right)  $.

We just consider the case $p_{i}>1$ for $i=1,2$. For any $x_{0}\in
{\mathbb{R}^{n}}$, set $B_{r}=B\left(  x_{0},r\right)  $ for the ball centered
at $x_{0}$ and of radius $r$ and $B_{2r}=B\left(  x_{0},2r\right)  $. Indeed,
we also decompose $f_{i}$ as $f_{i}\left(  y_{i}\right)  =$ $f_{i}\left(
y_{i}\right)  \chi_{B_{2r}}+f_{i}\left(  y_{i}\right)  \chi_{\left(
B_{2r}\right)  ^{c}}$ for $i=1,2$. And, we write $f_{1}=f_{1}^{0}%
+f_{1}^{\infty}$ and $f_{2}=f_{2}^{0}+f_{2}^{\infty}$, where \ $f_{i}%
^{0}=f_{i}\chi_{B_{2r}}$, \ $f_{i}^{\infty}=f_{i}\chi_{\left(  B_{2r}\right)
^{c}}$, for $i=1,2$. Thus, we have%
\begin{align*}
\left \Vert T_{\alpha}^{\left(  2\right)  }\left(  f_{1},f_{2}\right)
\right \Vert _{L_{q}\left(  B_{r}\right)  }  &  \leq \left \Vert T_{\alpha
}^{\left(  2\right)  }\left(  f_{1}^{0},f_{2}^{0}\right)  \right \Vert
_{L_{q}\left(  B_{r}\right)  }+\left \Vert T_{\alpha}^{\left(  2\right)
}\left(  f_{1}^{0},f_{2}^{\infty}\right)  \right \Vert _{L_{q}\left(
B_{r}\right)  }\\
&  +\left \Vert T_{\alpha}^{\left(  2\right)  }\left(  f_{1}^{\infty},f_{2}%
^{0}\right)  \right \Vert _{L_{q}\left(  B_{r}\right)  }+\left \Vert T_{\alpha
}^{\left(  2\right)  }\left(  f_{1}^{\infty},f_{2}^{\infty}\right)
\right \Vert _{L_{q}\left(  B_{r}\right)  }\\
&  =I_{1}+I_{2}+I_{3}+I_{4}.
\end{align*}

Firstly, we use the boundedness of $T_{\alpha}^{\left(  2\right)  }$ from
$L_{p_{1}}\times L_{p_{2}}$ into $L_{q}$ to estimate $I_{1}$, and we obtain%
\begin{align*}
I_{1}  &  =\left \Vert T_{\alpha}^{\left(  2\right)  }\left(  f_{1}^{0}%
,f_{2}^{0}\right)  \right \Vert _{L_{q}\left(  B_{r}\right)  }\lesssim
\left \Vert f_{1}\right \Vert _{L_{p_{1}}\left(  B_{2r}\right)  }\left \Vert
f_{2}\right \Vert _{L_{p_{2}}\left(  B_{2r}\right)  }\\
&  \lesssim r^{\frac{n}{q}}\left \Vert f_{1}\right \Vert _{L_{p_{1}}\left(
B_{2r}\right)  }\left \Vert f_{2}\right \Vert _{L_{p_{2}}\left(  B_{2r}\right)
}\int \limits_{2r}^{\infty}\frac{dt}{t^{\frac{n}{q}+1}}\\
&  \leq r^{\frac{n}{q}}\int \limits_{2r}^{\infty}\prod \limits_{i=1}%
^{2}\left \Vert f_{i}\right \Vert _{L_{p_{i}}\left(  B_{t}\right)  }\frac
{dt}{t^{\frac{n}{q}+1}}.
\end{align*}

Secondly, it is clear that $\left \vert \left(  x_{0}-y_{1},\text{ }x_{0}%
-y_{2}\right)  \right \vert ^{2n-\alpha}\geq \left \vert x_{0}-y_{2}\right \vert
^{2n-\alpha}$. By the condition (\ref{4}) with $m=2$, H\"{o}lder's inequality,
the estimate of $I_{2}$ can be obtained as follows:%
\begin{align*}
\left \vert T_{\alpha}^{\left(  2\right)  }\left(  f_{1}^{0},f_{2}^{\infty
}\right)  \left(  x\right)  \right \vert  &  \lesssim%
{\displaystyle \int \limits_{{\mathbb{R}^{n}}}}
{\displaystyle \int \limits_{{\mathbb{R}^{n}}}}
\frac{\left \vert f_{1}^{0}\left(  y_{1}\right)  \right \vert \left \vert
f_{2}^{\infty}\left(  y_{2}\right)  \right \vert }{\left \vert \left(
x-y_{1},x-y_{2}\right)  \right \vert ^{2n-\alpha}}dy_{1}dy_{2}\\
&  \lesssim \int \limits_{B_{2r}}\left \vert f_{1}\left(  y_{1}\right)
\right \vert dy_{1}\int \limits_{\left(  B_{2r}\right)  ^{c}}\frac{\left \vert
f_{2}\left(  y_{2}\right)  \right \vert }{\left \vert x_{0}-y_{2}\right \vert
^{2n-\alpha}}dy_{2}\\
&  \approx \int \limits_{B_{2r}}\left \vert f_{1}\left(  y_{1}\right)
\right \vert dy_{1}\int \limits_{\left(  B_{2r}\right)  ^{c}}\left \vert
f_{2}\left(  y_{2}\right)  \right \vert \int \limits_{\left \vert x_{0}%
-y_{2}\right \vert }^{\infty}\frac{dt}{t^{2n-\alpha+1}}dy_{2}\\
&  \lesssim \left \Vert f_{1}\right \Vert _{L_{p_{1}}\left(  B_{2r}\right)
}\left \vert B_{2r}\right \vert ^{1-\frac{1}{p_{1}}}%
{\displaystyle \int \limits_{2r}^{\infty}}
\left \Vert f_{2}\right \Vert _{L_{p_{2}}\left(  B_{t}\right)  }\left \vert
B_{t}\right \vert ^{1-\frac{1}{p_{2}}}\frac{dt}{t^{2n-\alpha+1}}\\
&  \lesssim \int \limits_{2r}^{\infty}\prod \limits_{i=1}^{2}\left \Vert
f_{i}\right \Vert _{L_{p_{i}}\left(  B_{t}\right)  }\frac{dt}{t^{\frac{n}{q}%
+1}},
\end{align*}
where $\frac{1}{p}=\frac{1}{p_{1}}+\frac{1}{p_{2}}$. Thus, the inequality
\begin{equation}
I_{2}=\left \Vert T_{\alpha}^{\left(  2\right)  }\left(  f_{1}^{0}%
,f_{2}^{\infty}\right)  \right \Vert _{L_{q}\left(  B_{r}\right)  }\lesssim
r^{\frac{n}{q}}\int \limits_{2r}^{\infty}\prod \limits_{i=1}^{2}\left \Vert
f_{i}\right \Vert _{L_{p_{i}}\left(  B_{t}\right)  }\frac{dt}{t^{\frac{n}{q}%
+1}} \label{10}%
\end{equation}
is valid.

Similarly, $I_{3}$ has the same estimate above, here we omit the details, thus
the inequality%
\[
I_{3}=\left \Vert T_{\alpha}^{\left(  2\right)  }\left(  f_{1}^{\infty}%
,f_{2}^{0}\right)  \right \Vert _{L_{q}\left(  B_{r}\right)  }\lesssim
r^{\frac{n}{q}}\int \limits_{2r}^{\infty}\prod \limits_{i=1}^{2}\left \Vert
f_{i}\right \Vert _{L_{p_{i}}\left(  B_{t}\right)  }\frac{dt}{t^{\frac{n}{q}%
+1}}.
\]
is valid.

At last, we consider the term $I_{4}$. Note that $\left \vert \left(
x_{0}-y_{1},\text{ }x_{0}-y_{2}\right)  \right \vert ^{2n-\alpha}\geq \left \vert
x_{0}-y_{1}\right \vert ^{n-\frac{\alpha}{2}}\left \vert x_{0}-y_{2}\right \vert
^{n-\frac{\alpha}{2}}$. Using the condition (\ref{4}) with $m=2$ and by
H\"{o}lder's inequality, we get%

\begin{align*}
&  \left \vert T_{\alpha}^{\left(  2\right)  }\left(  f_{1}^{\infty}%
,f_{2}^{\infty}\right)  \left(  x\right)  \right \vert \\
&  \lesssim%
{\displaystyle \int \limits_{\mathbb{R} ^{n}}}
{\displaystyle \int \limits_{\mathbb{R} ^{n}}}
\frac{\left \vert f_{1}\left(  y_{1}\right)  \chi_{\left(  B_{2r}\right)  ^{c}%
}\right \vert \left \vert f_{2}\left(  y_{2}\right)  \chi_{\left(
B_{2r}\right)  ^{c}}\right \vert }{\left \vert \left(  x_{0}-y_{1},x_{0}%
-y_{2}\right)  \right \vert ^{2n-\alpha}}dy_{1}dy_{2}\\
&  \lesssim%
{\displaystyle \int \limits_{\left(  B_{2r}\right)  ^{c}}}
{\displaystyle \int \limits_{\left(  B_{2r}\right)  ^{c}}}
\frac{\left \vert f_{1}\left(  y_{1}\right)  \right \vert \left \vert
f_{2}\left(  y_{2}\right)  \right \vert }{\left \vert x_{0}-y_{1}\right \vert
^{n-\frac{\alpha}{2}}\left \vert x_{0}-y_{2}\right \vert ^{n-\frac{\alpha}{2}}%
}dy_{1}dy_{2}\\
&  \lesssim%
{\displaystyle \sum \limits_{j=1}^{\infty}}
{\displaystyle \prod \limits_{i=1}^{2}}
{\displaystyle \int \limits_{B_{2^{j+1}r}\backslash B_{2^{j}r}}}
\frac{\left \vert f_{i}\left(  y_{i}\right)  \right \vert }{\left \vert
x_{0}-y_{i}\right \vert ^{n-\frac{\alpha}{2}}}dy_{i}\\
&  \lesssim%
{\displaystyle \sum \limits_{j=1}^{\infty}}
{\displaystyle \prod \limits_{i=1}^{2}}
\left(  2^{j}r\right)  ^{-n+\frac{\alpha}{2}}%
{\displaystyle \int \limits_{B_{2^{j+1}r}}}
\left \vert f_{i}\left(  y_{i}\right)  \right \vert dy_{i}\\
&  \lesssim%
{\displaystyle \sum \limits_{j=1}^{\infty}}
\left(  2^{j}r\right)  ^{-2n+\alpha}%
{\displaystyle \prod \limits_{i=1}^{2}}
\left \Vert f_{i}\right \Vert _{L_{p_{i}}(B_{2^{j+1}r})}\left \vert B_{2^{j+1}%
r}\right \vert ^{1-\frac{1}{p_{i}}}\\
&  \lesssim%
{\displaystyle \sum \limits_{j=1}^{\infty}}
{\displaystyle \int \limits_{2^{j+1}r}^{2^{j+2}r}}
\left(  2^{j+1}r\right)  ^{-2n+\alpha-1}%
{\displaystyle \prod \limits_{i=1}^{2}}
\left \Vert f_{i}\right \Vert _{L_{p_{i}}(B_{2^{j+1}r})}\left \vert B_{2^{j+1}%
r}\right \vert ^{1-\frac{1}{p_{i}}}dt\\
&  \lesssim%
{\displaystyle \sum \limits_{j=1}^{\infty}}
{\displaystyle \int \limits_{2^{j+1}r}^{2^{j+2}r}}
{\displaystyle \prod \limits_{i=1}^{2}}
\left \Vert f_{i}\right \Vert _{L_{p_{i}}(B_{t})}\left \vert B_{t}\right \vert
^{1-\frac{1}{p_{i}}}\frac{dt}{t^{2n+1-\alpha}}\\
&  \lesssim%
{\displaystyle \int \limits_{2r}^{\infty}}
\prod \limits_{i=1}^{2}\left \Vert f_{i}\right \Vert _{L_{p_{i}}\left(
B_{t}\right)  }\left \vert B_{t}\right \vert ^{2-\left(  \frac{1}{p_{1}}%
+\frac{1}{p_{2}}\right)  }\frac{dt}{t^{2n+1-\alpha}}\\
&  \lesssim%
{\displaystyle \int \limits_{2r}^{\infty}}
\prod \limits_{i=1}^{2}\left \Vert f_{i}\right \Vert _{L_{p_{i}}\left(
B_{t}\right)  }\frac{dt}{t^{\frac{n}{q}+1}}.
\end{align*}
Moreover, for $p_{1}$, $p_{2}\in \left[  1,\infty \right)  $ the inequality
\begin{equation}
I_{4}=\left \Vert T_{\alpha}^{\left(  2\right)  }\left(  f_{1}^{\infty}%
,f_{2}^{\infty}\right)  \right \Vert _{L_{q}\left(  B_{r}\right)  }\lesssim
r^{\frac{n}{q}}\int \limits_{2r}^{\infty}\prod \limits_{i=1}^{2}\left \Vert
f_{i}\right \Vert _{L_{p_{i}}\left(  B_{t}\right)  }\frac{dt}{t^{\frac{n}{q}%
+1}} \label{0}%
\end{equation}
is valid.

By combining the above inequalities for $I_{1},I_{2},I_{3}$ and $I_{4}$ we
obtain%
\[
\left \Vert T_{\alpha}^{\left(  2\right)  }\left(  f_{1},f_{2}\right)
\right \Vert _{L_{q}\left(  B_{r}\right)  }\lesssim r^{\frac{n}{q}}%
\int \limits_{2r}^{\infty}\prod \limits_{i=1}^{2}\left \Vert f_{i}\right \Vert
_{L_{p_{i}}\left(  B_{t}\right)  }\frac{dt}{t^{\frac{n}{q}+1}}.
\]

For the proof of the inequality (\ref{f*}), by a similar argument as in the
proof of (\ref{f}) and paying attention to the fact that $\overrightarrow
{f}\rightarrow T_{\alpha}^{\left(  m\right)  }\left(  \overrightarrow
{f}\right)  $ is bounded from $L_{p_{1}}\times \cdots \times L_{p_{m}}$ to
$WL_{q}$, we can similarly prove (\ref{f*}) so we omit the details here, which
completes the proof.
\end{proof}

In the following theorem, which is one of our main results, we get the
boundedness of the multi-sublinear operator $T_{\alpha}^{\left(  m\right)  }$
$\left(  m\in%
\mathbb{N}
\right)  $, $\alpha \in \left(  0,mn\right)  $ satisfying condition (\ref{4}) on
the product generalized local Morrey spaces $LM_{p_{1},\varphi_{1}}%
^{\{x_{0}\}}\times \cdots \times LM_{p_{m},\varphi_{m}}^{\{x_{0}\}}$.

\begin{theorem}
\label{teo9}(Our main result) Let $x_{0}\in{\mathbb{R}^{n}}$, $0<\alpha<mn$
and $1\leq p_{i}<\frac{mn}{\alpha}$ with $\frac{1}{p}=%
{\displaystyle \sum \limits_{i=1}^{m}}
\frac{1}{p_{i}}$, $\frac{1}{q_{i}}=\frac{1}{p_{i}}-\frac{\alpha}{mn}$ and
$\frac{1}{q}=%
{\displaystyle \sum \limits_{i=1}^{m}}
\frac{1}{q_{i}}=\frac{1}{p}-\frac{\alpha}{n}$. Let $T_{\alpha}^{\left(
m\right)  }$ $\left(  m\in%
\mathbb{N}
\right)  $ be a multi-sublinear operator satisfying condition (\ref{4}),
bounded from $L_{p_{1}}\times \cdots \times L_{p_{m}}$ into $L_{q}$ for
$p_{i}>1$ $\left(  i=1,\ldots,m\right)  $, and bounded from $L_{p_{1}}%
\times \cdots \times L_{p_{m}}$ into the weak space $WL_{q}$, and at least one
exponent $p_{i}$ $\left(  i=1,\ldots,m\right)  $ equals $1$. If functions
$\varphi,\varphi_{i}:{\mathbb{R}^{n}\times}\left(  0,\infty \right)
\rightarrow \left(  0,\infty \right)  ,$ $\left(  i=1,\ldots,m\right)  $ and
$(\varphi_{1},\ldots,\varphi_{m},\varphi)$ satisfy the condition%
\begin{equation}
\int \limits_{r}^{\infty}\frac{\operatorname*{essinf}\limits_{t<\tau<\infty}%
{\displaystyle \prod \limits_{i=1}^{m}}
\varphi_{i}(x_{0},\tau)\tau^{\frac{n}{p}}}{t^{\frac{n}{q}+1}}dt\leq
C\varphi(x_{0},r), \label{316}%
\end{equation}
where $C$ does not depend on $r$.

Then the operator $T_{\alpha}^{\left(  m\right)  }$ is bounded from product
space $LM_{p_{1},\varphi_{1}}^{\{x_{0}\}}\times \cdots \times LM_{p_{m}%
,\varphi_{m}}^{\{x_{0}\}}$ to $LM_{q,\varphi}^{\{x_{0}\}}$ for $p_{i}>1$
$\left(  i=1,\ldots,m\right)  $ and from product $LM_{p_{1},\varphi_{1}%
}^{\{x_{0}\}}\times \cdots \times LM_{p_{m},\varphi_{m}}^{\{x_{0}\}}$ to
$WLM_{q,\varphi}^{\{x_{0}\}}$ for $p_{i}\geq1$ $\left(  i=1,\ldots,m\right)
$. Moreover, we have for $p_{i}>1$ $\left(  i=1,\ldots,m\right)  $
\begin{equation}
\left \Vert T_{\alpha}^{\left(  m\right)  }\left(  \overrightarrow{f}\right)
\right \Vert _{LM_{q,\varphi}^{\{x_{0}\}}}\lesssim%
{\displaystyle \prod \limits_{i=1}^{m}}
\left \Vert f_{i}\right \Vert _{LM_{p_{i},\varphi_{i}}^{\{x_{0}\}}}, \label{3-1}%
\end{equation}
and for $p_{i}\geq1\left(  i=1,\ldots,m\right)  $
\begin{equation}
\left \Vert T_{\alpha}^{\left(  m\right)  }\left(  \overrightarrow{f}\right)
\right \Vert _{WLM_{q,\varphi}^{\{x_{0}\}}}\lesssim%
{\displaystyle \prod \limits_{i=1}^{m}}
\left \Vert f_{i}\right \Vert _{LM_{p_{i},\varphi_{i}}^{\{x_{0}\}}}. \label{3-2}%
\end{equation}

\end{theorem}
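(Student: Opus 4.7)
The plan is to derive (\ref{3-1}) and (\ref{3-2}) by combining the pointwise-in-$r$ estimates (\ref{f}) and (\ref{f*}) of Theorem \ref{Teo2} with the hypothesis (\ref{316}), where the essential infimum appearing in (\ref{316}) is reached via the essinf/esssup duality (\ref{5}). The whole strategy reduces to finding, for each $t>0$, an upper bound on $\prod_{i=1}^m\|f_i\|_{L_{p_i}(B(x_0,t))}$ whose integrand against $t^{-n/q-1}$ is controlled by $\varphi(x_0,r)$ through the assumption.

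The key step is to convert the defining inequality of $LM_{p_i,\varphi_i}^{\{x_0\}}$ into an essinf-valued bound. For every $\tau\geq t$, the monotonicity of $\tau\mapsto\|f_i\|_{L_{p_i}(B(x_0,\tau))}$ together with the definition of the local Morrey norm yields
\[
\|f_i\|_{L_{p_i}(B(x_0,t))}\leq \varphi_i(x_0,\tau)|B(x_0,\tau)|^{\frac{1}{p_i}}\|f_i\|_{LM_{p_i,\varphi_i}^{\{x_0\}}}.
\]
Dividing by the $\varphi_i$-factor, taking the essential supremum over $\tau\in(t,\infty)$, and invoking (\ref{5}) turns $\operatorname*{esssup}_{\tau>t}[\varphi_i(x_0,\tau)\tau^{n/p_i}]^{-1}$ into $[\operatorname*{essinf}_{\tau>t}\varphi_i(x_0,\tau)\tau^{n/p_i}]^{-1}$, so that
\[
\|f_i\|_{L_{p_i}(B(x_0,t))}\lesssim \|f_i\|_{LM_{p_i,\varphi_i}^{\{x_0\}}}\operatorname*{essinf}_{t<\tau<\infty}\varphi_i(x_0,\tau)\tau^{\frac{n}{p_i}}.
\]
Multiplying these bounds over $i=1,\ldots,m$ and using the elementary inequality $\prod_i\operatorname*{essinf}g_i\leq \operatorname*{essinf}\prod_i g_i$ together with $\sum_i 1/p_i=1/p$ gives
\[
\prod_{i=1}^m\|f_i\|_{L_{p_i}(B(x_0,t))}\lesssim \Bigl(\prod_{i=1}^m\|f_i\|_{LM_{p_i,\varphi_i}^{\{x_0\}}}\Bigr)\operatorname*{essinf}_{t<\tau<\infty}\prod_{i=1}^m\varphi_i(x_0,\tau)\tau^{\frac{n}{p}},
\]
which is exactly the structure appearing in the numerator of (\ref{316}).

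Plugging this product bound into (\ref{f}), absorbing $\int_{2r}^\infty\leq \int_r^\infty$, and invoking (\ref{316}) then yields
\[
\|T_\alpha^{(m)}(\overrightarrow{f})\|_{L_q(B_r)}\lesssim r^{\frac{n}{q}}\varphi(x_0,r)\prod_{i=1}^m\|f_i\|_{LM_{p_i,\varphi_i}^{\{x_0\}}}.
\]
Since $|B(x_0,r)|^{1/q}\approx r^{n/q}$, dividing both sides by $\varphi(x_0,r)|B(x_0,r)|^{1/q}$ and taking the supremum over $r>0$ delivers (\ref{3-1}). The weak-type inequality (\ref{3-2}) is obtained by the identical argument, using (\ref{f*}) in place of (\ref{f}).

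The main subtlety I anticipate is the correct ordering of the essinf manipulation: one must apply monotonicity of $\tau\mapsto\|f_i\|_{L_{p_i}(B(x_0,\tau))}$ \emph{before} taking the essential supremum in $\tau$, since otherwise the duality identity (\ref{5}) would leave an essential \emph{supremum} on the $\varphi_i$-side, which would fail to match the integrand of the hypothesis (\ref{316}). Once this ordering is secured, the remaining pieces---the sub-multiplicativity of essinf and a single invocation of (\ref{316})---are routine.
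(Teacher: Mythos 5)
Your proposal is correct and follows essentially the same route as the paper: both arguments use the monotonicity of $t\mapsto\prod_{i=1}^{m}\Vert f_{i}\Vert_{L_{p_{i}}(B(x_{0},t))}$ together with the duality (\ref{5}) to bound that product by $\bigl(\prod_{i=1}^{m}\Vert f_{i}\Vert_{LM_{p_{i},\varphi_{i}}^{\{x_{0}\}}}\bigr)\operatorname*{essinf}_{t<\tau<\infty}\prod_{i=1}^{m}\varphi_{i}(x_{0},\tau)\tau^{n/p}$, and then feed this into (\ref{f}) (resp.\ (\ref{f*})) and the hypothesis (\ref{316}). The only cosmetic difference is that you argue factor by factor and then use $\prod_{i}\operatorname*{essinf}g_{i}\leq\operatorname*{essinf}\prod_{i}g_{i}$, whereas the paper manipulates the product as a whole; this changes nothing of substance.
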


\begin{proof}
Since $\overrightarrow{f}\in LM_{p_{1},\varphi_{1}}^{\{x_{0}\}}\times
\cdots \times LM_{p_{m},\varphi_{m}}^{\{x_{0}\}}$, by (\ref{5}) and the
non-decreasing, with respect to $t$, of the norm $%
{\displaystyle \prod \limits_{i=1}^{m}}
\left \Vert f_{i}\right \Vert _{L_{p_{i}}\left(  B_{t}\right)  }$, we get%
\begin{align}
&  \frac{%
{\displaystyle \prod \limits_{i=1}^{m}}
\left \Vert f_{i}\right \Vert _{L_{p_{i}}\left(  B_{t}\right)  }}%
{\operatorname*{essinf}\limits_{0<t<\tau<\infty}%
{\displaystyle \prod \limits_{i=1}^{m}}
\varphi_{i}(x_{0},\tau)\tau^{\frac{n}{p}}}\nonumber \\
&  \leq \operatorname*{esssup}\limits_{0<t<\tau<\infty}\frac{%
{\displaystyle \prod \limits_{i=1}^{m}}
\left \Vert f_{i}\right \Vert _{L_{p_{i}}\left(  B_{t}\right)  }}{%
{\displaystyle \prod \limits_{i=1}^{m}}
\varphi_{i}(x_{0},\tau)\tau^{\frac{n}{p}}}\nonumber \\
&  \leq \operatorname*{esssup}\limits_{0<\tau<\infty}\frac{%
{\displaystyle \prod \limits_{i=1}^{m}}
\left \Vert f_{i}\right \Vert _{L_{p_{i}}\left(  B_{\tau}\right)  }}{%
{\displaystyle \prod \limits_{i=1}^{m}}
\varphi_{i}(x_{0},\tau)\tau^{\frac{n}{p}}}\nonumber \\
&  \leq%
{\displaystyle \prod \limits_{i=1}^{m}}
\left \Vert f_{i}\right \Vert _{LM_{p_{i},\varphi_{i}}^{\{x_{0}\}}}. \label{9}%
\end{align}
For $1<p_{1},\ldots,p_{m}<\infty$, since $(\varphi_{1},\ldots,\varphi
_{m},\varphi)$ satisfies (\ref{316}), we have%
\begin{align}
&  \int \limits_{r}^{\infty}%
{\displaystyle \prod \limits_{i=1}^{m}}
\left \Vert f_{i}\right \Vert _{L_{p_{i}}\left(  B_{t}\right)  }t^{-\frac{n}%
{q}-1}dt\nonumber \\
&  \leq \int \limits_{r}^{\infty}\frac{%
{\displaystyle \prod \limits_{i=1}^{m}}
\left \Vert f_{i}\right \Vert _{L_{p_{i}}\left(  B_{t}\right)  }}%
{\operatorname*{essinf}\limits_{t<\tau<\infty}%
{\displaystyle \prod \limits_{i=1}^{m}}
\varphi_{i}(x_{0},\tau)\tau^{\frac{n}{p}}}\frac{\operatorname*{essinf}%
\limits_{t<\tau<\infty}%
{\displaystyle \prod \limits_{i=1}^{m}}
\varphi_{i}(x_{0},\tau)\tau^{\frac{n}{p}}}{t^{\frac{n}{q}}}\frac{dt}%
{t}\nonumber \\
&  \leq C%
{\displaystyle \prod \limits_{i=1}^{m}}
\left \Vert f_{i}\right \Vert _{LM_{p_{i},\varphi_{i}}^{\{x_{0}\}}}%
\int \limits_{r}^{\infty}\frac{\operatorname*{essinf}\limits_{t<\tau<\infty}%
{\displaystyle \prod \limits_{i=1}^{m}}
\varphi_{i}(x_{0},\tau)\tau^{\frac{n}{p}}}{t^{\frac{n}{q}}}\frac{dt}%
{t}\nonumber \\
&  \leq C%
{\displaystyle \prod \limits_{i=1}^{m}}
\left \Vert f_{i}\right \Vert _{LM_{p_{i},\varphi_{i}}^{\{x_{0}\}}}\varphi
(x_{0},r). \label{14}%
\end{align}
Then by (\ref{f}) and (\ref{14}), we get%
\begin{align*}
\left \Vert T_{\alpha}^{\left(  m\right)  }\left(  \overrightarrow{f}\right)
\right \Vert _{LM_{q,\varphi}^{\{x_{0}\}}}  &  =\sup_{r>0}\varphi \left(
x_{0},r\right)  ^{-1}|B_{r}|^{-\frac{1}{q}}\left \Vert T_{\alpha}^{\left(
m\right)  }\left(  \overrightarrow{f}\right)  \right \Vert _{L_{q}\left(
B_{r}\right)  }\\
&  \lesssim \sup_{r>0}\varphi \left(  x_{0},r\right)  ^{-1}\int \limits_{r}%
^{\infty}%
{\displaystyle \prod \limits_{i=1}^{m}}
\left \Vert f_{i}\right \Vert _{L_{p_{i}}\left(  B_{t}\right)  }t^{-\frac{n}{q}%
}\frac{dt}{t}\\
&  \lesssim%
{\displaystyle \prod \limits_{i=1}^{m}}
\left \Vert f_{i}\right \Vert _{LM_{p_{i},\varphi_{i}}^{\{x_{0}\}}}.
\end{align*}
Thus we obtain (\ref{3-1}). Also, for $p_{i}=1\left(  i=1,\ldots,m\right)  $,
the proof of the inequality (\ref{3-2}) is similar and we omit the details
here. Hence the proof is completed.
\end{proof}

Particularly, when $\alpha=0$, we can get the following result for the
$m$-linear {Calder\'{o}n-Zygmund operator by (\ref{10*}) and }Theorem
\ref{TeoA}.

\begin{corollary}
\label{corollary 3}Let $x_{0}\in{\mathbb{R}^{n}}$, $1\leq p_{1},\ldots
,p_{m}<\infty$ with $\frac{1}{p}=%
{\displaystyle \sum \limits_{i=1}^{m}}
\frac{1}{p_{i}}$ and $(\varphi_{1},\ldots,\varphi_{m},\varphi)$ satisfies
condition (\ref{316}). Then the operators $M^{\left(  m\right)  }$ and
$\overline{T}^{\left(  m\right)  }$ $\left(  m\in%
\mathbb{N}
\right)  $ are bounded from product space $LM_{p_{1},\varphi_{1}}^{\{x_{0}%
\}}\times \cdots \times LM_{p_{m},\varphi_{m}}^{\{x_{0}\}}$ to $LM_{p,\varphi
}^{\{x_{0}\}}$ for $p_{i}>1$ $\left(  i=1,\ldots,m\right)  $ and from product
$LM_{p_{1},\varphi_{1}}^{\{x_{0}\}}\times \cdots \times LM_{p_{m},\varphi_{m}%
}^{\{x_{0}\}}$ to $WLM_{p,\varphi}^{\{x_{0}\}}$ for $p_{i}\geq1$ $\left(
i=1,\ldots,m\right)  $.
\end{corollary}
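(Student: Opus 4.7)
The plan is to deduce this corollary directly from Theorem \ref{teo9} by specializing to $\alpha=0$, in which case the index relations become $q=p$ and $q_i=p_i$, and condition (\ref{4}) reduces to the Calder\'{o}n--Zygmund-type pointwise bound
\begin{equation*}
\left|T^{(m)}(\overrightarrow{f})(x)\right|\leq c_{0}\int_{(\mathbb{R}^{n})^{m}}\frac{\prod_{i=1}^{m}|f_{i}(y_{i})|}{|(x-y_{1},\ldots,x-y_{m})|^{mn}}\,d\overrightarrow{y},\qquad x\notin\bigcap_{i=1}^{m}\operatorname{supp}f_{i}.
\end{equation*}
Only two items must be checked before Theorem \ref{teo9} can be invoked.

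First I would verify this pointwise control for both $\overline{T}^{(m)}$ and $M^{(m)}$. For $\overline{T}^{(m)}$ it is immediate from the kernel size estimate (\ref{10*}). For $M^{(m)}$ I would observe that whenever $y_{i}\in B(x,t)$ one has $|(x-y_{1},\ldots,x-y_{m})|\leq\sqrt{m}\,t$, so that
\begin{equation*}
\prod_{i=1}^{m}\frac{1}{|B(x,t)|}\int_{B(x,t)}|f_{i}(y_{i})|\,dy_{i}\approx t^{-mn}\int_{(B(x,t))^{m}}\prod_{i=1}^{m}|f_{i}(y_{i})|\,d\overrightarrow{y}\lesssim\int_{(\mathbb{R}^{n})^{m}}\frac{\prod_{i=1}^{m}|f_{i}(y_{i})|}{|(x-y_{1},\ldots,x-y_{m})|^{mn}}\,d\overrightarrow{y},
\end{equation*}
and taking the supremum in $t>0$ preserves this bound, establishing (\ref{4}) at $\alpha=0$.

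Second I would supply the Lebesgue endpoint boundedness required by Theorem \ref{teo9}. For $\overline{T}^{(m)}$ this is exactly Theorem \ref{TeoA}: it gives the strong bound $L_{p_{1}}\times\cdots\times L_{p_{m}}\to L_{p}$ when all $p_{i}>1$ and the weak endpoint $L_{1}\times\cdots\times L_{1}\to L_{1/m,\infty}$ otherwise. The analogous estimates for $M^{(m)}$ follow from H\"{o}lder's inequality combined with the classical Hardy--Littlewood maximal inequalities applied factor by factor.

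With both hypotheses of Theorem \ref{teo9} in place and the assumption (\ref{316}) on $(\varphi_{1},\ldots,\varphi_{m},\varphi)$, applying Theorem \ref{teo9} at $\alpha=0$ yields exactly (\ref{3-1}) and (\ref{3-2}), which is the assertion of the corollary. I expect no substantial obstacle, since the deep analytic work already lives in Theorem \ref{teo9}; the only place where this reduction requires a genuine (albeit short) calculation is the pointwise domination of $M^{(m)}$ by the Calder\'{o}n--Zygmund-type integral carried out above.
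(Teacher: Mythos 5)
Your proposal is correct and follows essentially the same route as the paper, which justifies the corollary in one line as the $\alpha=0$ specialization of Theorem \ref{teo9}, with condition (\ref{4}) supplied by the size estimate (\ref{10*}) and the Lebesgue-space hypotheses by Theorem \ref{TeoA}. The only addition is your explicit pointwise domination of $M^{\left(m\right)}$ by the Calder\'{o}n--Zygmund-type integral, a short verification the paper leaves implicit, and it is carried out correctly.
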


\begin{remark}
Note that, in the case of $m=1$ Theorem \ref{teo9} and Corollary
\ref{corollary 3} have been proved in \cite{BGGS, Gurbuz1, Gurbuz2}.
\end{remark}

If $0<\alpha<mn$ and $\overline{T}_{\alpha}^{\left(  m\right)  }$ $\left(
m\in%
\mathbb{N}
\right)  $ is an $m$-linear fractional integral operator, then the condition
of (\ref{4}) is obviously satisfied by (\ref{1}). We can obtain the following
corollary of Theorem \ref{teo9} by Theorem \ref{Teo-1}:

\begin{corollary}
\label{corollaryA}Let $x_{0}\in{\mathbb{R}^{n}}$, $0<\alpha<mn$ and $1\leq
p_{i}<\frac{mn}{\alpha}$ with $\frac{1}{p}=%
{\displaystyle \sum \limits_{i=1}^{m}}
\frac{1}{p_{i}}$, $\frac{1}{q_{i}}=\frac{1}{p_{i}}-\frac{\alpha}{mn}$ and
$\frac{1}{q}=%
{\displaystyle \sum \limits_{i=1}^{m}}
\frac{1}{q_{i}}=\frac{1}{p}-\frac{\alpha}{n}$and also $(\varphi_{1}%
,\ldots,\varphi_{m},\varphi)$ satisfies condition (\ref{316}). Then the
operators $M_{\alpha}^{\left(  m\right)  }$ and $\overline{T}_{\alpha
}^{\left(  m\right)  }$ $\left(  m\in%
\mathbb{N}
\right)  $ are bounded from product space $LM_{p_{1},\varphi_{1}}^{\{x_{0}%
\}}\times \cdots \times LM_{p_{m},\varphi_{m}}^{\{x_{0}\}}$ to $LM_{q,\varphi
}^{\{x_{0}\}}$ for $p_{i}>1$ $\left(  i=1,\ldots,m\right)  $ and from product
$LM_{p_{1},\varphi_{1}}^{\{x_{0}\}}\times \cdots \times LM_{p_{m},\varphi_{m}%
}^{\{x_{0}\}}$ to $WLM_{q,\varphi}^{\{x_{0}\}}$ for $p_{i}\geq1$ $\left(
i=1,\ldots,m\right)  $.
\end{corollary}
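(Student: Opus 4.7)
The plan is to realize this corollary as a direct consequence of Theorem~\ref{teo9} applied to the two specific operators. The observation is that both $\overline{T}_{\alpha}^{(m)}$ and $M_{\alpha}^{(m)}$ satisfy the structural size bound (\ref{4}), and that the requisite Lebesgue-space boundedness is already in hand (from Theorem~\ref{Teo-1} for $\overline{T}_{\alpha}^{(m)}$, from which the same for $M_{\alpha}^{(m)}$ follows via a pointwise comparison analogous to the $m=1$ relationship $M_\alpha f\le \nu_n^{\alpha/n-1}\overline{T}_\alpha(|f|)$ recalled in the introduction).

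First I would verify that (\ref{4}) holds for $T=\overline{T}_{\alpha}^{(m)}$ with $c_0=1$: this is immediate from the explicit form of the kernel in (\ref{1}) together with the fact that replacing $f_i$ by $|f_i|$ gives an upper bound of the same multilinear fractional integral expression. For $M_\alpha^{(m)}$ I would argue as follows: by definition
\[
M_\alpha^{(m)}(\overrightarrow f)(x)=\sup_{t>0}|B(x,t)|^{\frac{\alpha}{n}-m}\prod_{i=1}^m\int_{B(x,t)}|f_i(y_i)|\,dy_i,
\]
and for any fixed $t>0$ and each $y_i\in B(x,t)$ one has $|(x-y_1,\ldots,x-y_m)|\le \sqrt{m}\,t$, so
\[
\left|B(x,t)\right|^{\frac{\alpha}{n}-m}\le C_{m,n,\alpha}\,|(x-y_1,\ldots,x-y_m)|^{\alpha-mn},
\]
which, once inserted in the sup and then compared to the integral over $(\mathbb{R}^n)^m$, yields the pointwise bound $M_\alpha^{(m)}(\overrightarrow f)(x)\le C\,\overline{T}_{\alpha}^{(m)}(|f_1|,\ldots,|f_m|)(x)$; hence $M_\alpha^{(m)}$ also satisfies (\ref{4}) and inherits the Lebesgue-space bounds from Theorem~\ref{Teo-1}.

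With (\ref{4}) verified and the Lebesgue/weak-Lebesgue bounds in place, the hypotheses of Theorem~\ref{teo9} are met for $T_\alpha^{(m)}\in\{M_\alpha^{(m)},\overline{T}_{\alpha}^{(m)}\}$. Since $(\varphi_1,\ldots,\varphi_m,\varphi)$ is assumed to satisfy (\ref{316}), Theorem~\ref{teo9} directly supplies both (\ref{3-1}) when every $p_i>1$ and (\ref{3-2}) when some $p_i=1$, which is exactly the conclusion of the corollary. No additional analysis is required beyond these verifications.

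The only potentially nontrivial step is the pointwise domination $M_\alpha^{(m)}\lesssim \overline{T}_{\alpha}^{(m)}$ used to transfer (\ref{4}) and the boundedness results from $\overline{T}_{\alpha}^{(m)}$ to $M_\alpha^{(m)}$; this is elementary but should be stated explicitly. Everything else is a mechanical specialization of the main theorem, so I expect no genuine obstacles.
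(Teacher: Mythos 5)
Your proposal is correct and follows essentially the same route as the paper, which simply observes that the kernel form (\ref{1}) makes condition (\ref{4}) obvious for $\overline{T}_{\alpha}^{(m)}$ and then invokes Theorem \ref{Teo-1} together with Theorem \ref{teo9}. Your explicit pointwise domination $M_{\alpha}^{(m)}(\overrightarrow{f})(x)\lesssim\overline{T}_{\alpha}^{(m)}(|f_{1}|,\ldots,|f_{m}|)(x)$ is a correct and worthwhile filling-in of a step the paper leaves implicit (it only records the $m=1$ analogue in the introduction).
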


\begin{remark}
Note that, in the case of $m=1$ Corollary \ref{corollaryA} has been proved in
\cite{Gurbuz1, Gurbuz2}.
\end{remark}

\section{Boundedness of the commutators of $m$-linear operators generated by
$m$-linear fractional integral operators and local Campanato functions on the
product spaces $LM_{p_{1},\varphi_{1}}^{\{x_{0}\}}\times \cdots \times
LM_{p_{m},\varphi_{m}}^{\{x_{0}\}}$}

In this section we prove the boundedness of the commutator operator
$T_{\alpha,\overrightarrow{b}}^{\left(  m\right)  }$ $\left(  m\in%
\mathbb{N}
\right)  $ with $\overrightarrow{b}\in LC_{q_{i},\lambda_{i}}^{\left \{
x_{0}\right \}  }({\mathbb{R}^{n}})$ on the product generalized local Morrey
spaces $LM_{p_{1},\varphi_{1}}^{\{x_{0}\}}\times \cdots \times LM_{p_{m}%
,\varphi_{m}}^{\{x_{0}\}}$ by using (\ref{5}) and the following Theorem
\ref{Teo 5}.

Since $BMO({\mathbb{R}^{n}})\subset \bigcap \limits_{q>1}LC_{q}^{\left \{
x_{0}\right \}  }({\mathbb{R}^{n}})$, if we only assume $b\in LC_{q}^{\left \{
x_{0}\right \}  }({\mathbb{R}^{n}})$, or more generally $b\in LC_{q,\lambda
}^{\left \{  x_{0}\right \}  }({\mathbb{R}^{n}})$, then $[b,\overline{T}]$ may
not be a bounded operator on $L_{p}({\mathbb{R}^{n}})$, $1<p<\infty$. However,
it has some boundedness properties on other spaces. As a matter of fact,
Grafakos et al. \cite{GraLiYang} have considered the commutator with $b\in
LC_{q}^{\left \{  x_{0}\right \}  }({\mathbb{R}^{n}})$ on Herz spaces for the
first time (see \cite{FuLinLu} for details). Morever, in \cite{BGGS, FuLinLu,
Gurbuz1, Gurbuz2, Gurbuz3, TaoShi} they have considered the commutators with
$b\in LC_{q,\lambda}^{\left \{  x_{0}\right \}  }({\mathbb{R}^{n}})$.

There are two major reasons for considering the problem of commutators. The
first one is that the boundedness of commutators can produce some
characterizations of function spaces (see \cite{BGGS, Chanillo, Gurbuz1,
Gurbuz2, Gurbuz3, Gurbuz4, Janson, Palus, Shi}). The other one is that the
theory of commutators plays an important role in the study of the regularity
of solutions to elliptic and parabolic PDEs of the second order (see
\cite{ChFraL1, ChFraL2, Pal, Softova}). The boundedness of the commutator has
also been generalized to other contexts and important applications to some
non-linear PDEs have been given by Coifman et al. \cite{CLMS}.

The definition of local Campanato space is as follows.

\begin{definition}
\cite{BGGS, Gurbuz1, Gurbuz4} Let $1\leq q<\infty$ and $0\leq \lambda<\frac
{1}{n}$. A function $f\in L_{q}^{loc}\left(  {\mathbb{R}^{n}}\right)  $ is
said to belong to the $LC_{q,\lambda}^{\left \{  x_{0}\right \}  }\left(
{\mathbb{R}^{n}}\right)  $ (local Campanato space), if%
\begin{equation}
\left \Vert f\right \Vert _{LC_{q,\lambda}^{\left \{  x_{0}\right \}  }}%
=\sup_{r>0}\left(  \frac{1}{\left \vert B\left(  x_{0},r\right)  \right \vert
^{1+\lambda q}}\int \limits_{B\left(  x_{0},r\right)  }\left \vert f\left(
y\right)  -f_{B\left(  x_{0},r\right)  }\right \vert ^{q}dy\right)  ^{\frac
{1}{q}}<\infty, \label{e51}%
\end{equation}
where%
\[
f_{B\left(  x_{0},r\right)  }=\frac{1}{\left \vert B\left(  x_{0},r\right)
\right \vert }\int \limits_{B\left(  x_{0},r\right)  }f\left(  y\right)  dy.
\]
Define%
\[
LC_{q,\lambda}^{\left \{  x_{0}\right \}  }\left(  {\mathbb{R}^{n}}\right)
=\left \{  f\in L_{q}^{loc}\left(  {\mathbb{R}^{n}}\right)  :\left \Vert
f\right \Vert _{LC_{q,\lambda}^{\left \{  x_{0}\right \}  }}<\infty \right \}  .
\]

\end{definition}

\begin{remark}
If two functions which differ by a constant are regarded as a function in the
space $LC_{q,\lambda}^{\left \{  x_{0}\right \}  }\left(  {\mathbb{R}^{n}%
}\right)  $, then $LC_{q,\lambda}^{\left \{  x_{0}\right \}  }\left(
{\mathbb{R}^{n}}\right)  $ becomes a Banach space. The space $LC_{q,\lambda
}^{\left \{  x_{0}\right \}  }\left(  {\mathbb{R}^{n}}\right)  $ when
$\lambda=0$ is just the $LC_{q}^{\left \{  x_{0}\right \}  }({\mathbb{R}^{n}})$.
Apparently, (\ref{e51}) is equivalent to the following condition:%
\[
\sup_{r>0}\inf_{c\in%
\mathbb{C}
}\left(  \frac{1}{\left \vert B\left(  x_{0},r\right)  \right \vert ^{1+\lambda
q}}\int \limits_{B\left(  x_{0},r\right)  }\left \vert f\left(  y\right)
-c\right \vert ^{q}dy\right)  ^{\frac{1}{q}}<\infty.
\]

\end{remark}

In \cite{LuYang1}, Lu and Yang have introduced the central BMO space
$CBMO_{q}({\mathbb{R}^{n}})=LC_{q,0}^{\{0\}}({\mathbb{R}^{n}})$. Also the
space $CBMO^{\{x_{0}\}}({\mathbb{R}^{n}})=LC_{1,0}^{\{x_{0}\}}({\mathbb{R}%
^{n}})$ has been considered in other denotes in \cite{Rzaev}. The space
$LC_{q}^{\left \{  x_{0}\right \}  }({\mathbb{R}^{n}})$ can be regarded as a
local version of $BMO({\mathbb{R}^{n}})$, the space of bounded mean
oscillation, at the origin. But, they have quite different properties. The
classical John-Nirenberg inequality shows that functions in $BMO({\mathbb{R}%
^{n}})$ are locally exponentially integrable. This implies that, for any
$1\leq q<\infty$, the functions in $BMO({\mathbb{R}^{n}})$ can be described by
means of the condition: \
\[
\sup_{B\subset{\mathbb{R}^{n}}}\left(  \frac{1}{|B|}%
{\displaystyle \int \limits_{B}}
|f(y)-f_{B}|^{q}dy\right)  ^{1/q}<\infty,
\]
where $B$ denotes an arbitrary ball on ${\mathbb{R}^{n}}$. However, the space
$LC_{q}^{\left \{  x_{0}\right \}  }({\mathbb{R}^{n}})$ depends on $q$. If
$q_{1}<q_{2}$, then $LC_{q_{2}}^{\left \{  x_{0}\right \}  }({\mathbb{R}^{n}%
})\subsetneqq LC_{q_{1}}^{\left \{  x_{0}\right \}  }({\mathbb{R}^{n}})$.
Therefore, there is no analogy of the famous John-Nirenberg inequality of
$BMO({\mathbb{R}^{n}})$ for the space $LC_{q}^{\left \{  x_{0}\right \}
}({\mathbb{R}^{n}})$. One can imagine that the behavior of $LC_{q}^{\left \{
x_{0}\right \}  }({\mathbb{R}^{n}})$ may be quite different from that of
$BMO({\mathbb{R}^{n}})$ (see \cite{LuWu} for details).

\begin{lemma}
\label{Lemma 4}\cite{BGGS, Gurbuz1, Gurbuz2} Let $b$ be function in
$LC_{q,\lambda}^{\left \{  x_{0}\right \}  }\left(
\mathbb{R}
^{n}\right)  $, $1\leq q<\infty$, $0\leq \lambda<\frac{1}{n}$ and $r_{1}$,
$r_{2}>0$. Then%
\begin{equation}
\left(  \frac{1}{\left \vert B\left(  x_{0},r_{1}\right)  \right \vert
^{1+\lambda q}}%
{\displaystyle \int \limits_{B\left(  x_{0},r_{1}\right)  }}
\left \vert b\left(  y\right)  -b_{B\left(  x_{0},r_{2}\right)  }\right \vert
^{q}dy\right)  ^{\frac{1}{q}}\leq C\left(  1+\left \vert \ln \frac{r_{1}}{r_{2}%
}\right \vert \right)  \left \Vert b\right \Vert _{LC_{q,\lambda}^{\left \{
x_{0}\right \}  }}, \label{a*}%
\end{equation}
where $C>0$ is independent of $b$, $r_{1}$ and $r_{2}$.

From this inequality (\ref{a*}), we have for $0<r_{2}<r_{1}$%
\begin{equation}
\left \vert b_{B\left(  x_{0},r_{1}\right)  }-b_{B\left(  x_{0},r_{2}\right)
}\right \vert \leq C\left(  1+\ln \frac{r_{1}}{r_{2}}\right)  \left \vert
B\left(  x_{0},r_{1}\right)  \right \vert ^{\lambda}\left \Vert b\right \Vert
_{LC_{q,\lambda}^{\left \{  x_{0}\right \}  }} \label{b*}%
\end{equation}
and it is easy to see that%
\begin{equation}
\left \Vert b-\left(  b\right)  _{B\left(  x_{0},r\right)  }\right \Vert
_{L_{q}\left(  B\left(  x_{0},r\right)  \right)  }\leq Cr^{\frac{n}%
{q}+n\lambda}\left \Vert b\right \Vert _{LC_{q,\lambda}^{\left \{  x_{0}\right \}
}}. \label{c*}%
\end{equation}

\end{lemma}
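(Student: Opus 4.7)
The proof breaks naturally into three parts, one for each of \eqref{a*}, \eqref{b*}, \eqref{c*}, and they are best proved in the reverse order so that each estimate can feed the next.

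First I would dispose of \eqref{c*}, which is essentially a restatement of the definition. By the very definition of $\|b\|_{LC_{q,\lambda}^{\{x_0\}}}$, the supremum controls the single radius $r$, so rearranging gives
\[
\int_{B(x_0,r)} |b(y) - b_{B(x_0,r)}|^q\, dy \;\le\; |B(x_0,r)|^{1+\lambda q}\,\|b\|_{LC_{q,\lambda}^{\{x_0\}}}^q,
\]
and taking $q$-th roots together with $|B(x_0,r)| \approx r^n$ yields \eqref{c*}. This part should be a one-liner.

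Next I would establish \eqref{b*} by a dyadic telescoping argument. For $0<r_2<r_1$, pick the integer $k$ with $2^{k-1}r_2 \le r_1 < 2^k r_2$ (so that $k \lesssim 1 + \ln(r_1/r_2)$), insert the chain of intermediate averages $b_{B(x_0,2^j r_2)}$ for $j=0,1,\ldots,k$, and use the triangle inequality. For each consecutive pair $(2^{j}r_2, 2^{j+1}r_2)$, the average of the difference over the smaller ball is bounded, by H\"older's inequality and $B(x_0,2^j r_2)\subset B(x_0,2^{j+1}r_2)$, by
\[
\frac{1}{|B(x_0,2^j r_2)|^{1/q}}\Bigl(\int_{B(x_0,2^{j+1}r_2)}|b(y)-b_{B(x_0,2^{j+1}r_2)}|^q\,dy\Bigr)^{1/q}\lesssim |B(x_0,2^{j+1}r_2)|^{\lambda}\|b\|_{LC_{q,\lambda}^{\{x_0\}}},
\]
by the definition of the norm. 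Since $\lambda\ge 0$, each of these $k$ terms is dominated by $|B(x_0,r_1)|^{\lambda}\|b\|_{LC_{q,\lambda}^{\{x_0\}}}$, and summing gives the factor $k\lesssim 1+\ln(r_1/r_2)$ that appears in \eqref{b*}. (A symmetric argument handles the other direction, taking the larger ball's measure on the right.)

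Finally I would prove \eqref{a*} by adding and subtracting the average over $B(x_0,r_1)$: writing $b(y)-b_{B(x_0,r_2)} = (b(y)-b_{B(x_0,r_1)}) + (b_{B(x_0,r_1)}-b_{B(x_0,r_2)})$ and applying the triangle inequality in $L_q(B(x_0,r_1))$, the first piece contributes $\lesssim \|b\|_{LC_{q,\lambda}^{\{x_0\}}}$ by \eqref{c*} after dividing by $|B(x_0,r_1)|^{1/q+\lambda}$, while the second piece is a constant whose $L_q$ norm is $|b_{B(x_0,r_1)}-b_{B(x_0,r_2)}|\,|B(x_0,r_1)|^{1/q}$, and dividing by $|B(x_0,r_1)|^{1/q+\lambda}$ reduces it to $|B(x_0,r_1)|^{-\lambda}|b_{B(x_0,r_1)}-b_{B(x_0,r_2)}|$, which is controlled by \eqref{b*}. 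The main obstacle is the telescoping step in \eqref{b*}: one must track the growth factor $|B(x_0,2^{j+1}r_2)|^\lambda$ at each scale and verify that the sum still collapses to the single factor $|B(x_0,r_1)|^\lambda$ times a logarithm; the fact that $\lambda\ge 0$ makes the largest ball dominate, so the logarithmic number of steps produces exactly the bound stated.
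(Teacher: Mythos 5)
First, note that the paper contains no proof of this lemma to compare against: it cites \cite{BGGS, Gurbuz1, Gurbuz2} and the remark following the statement defers the proof of (\ref{a*}) to Lemma 2.1 of \cite{LinLu}. Your reconstruction is exactly the standard argument from those sources: (\ref{c*}) is indeed immediate from the definition of $\left\Vert b\right\Vert _{LC_{q,\lambda}^{\left\{ x_{0}\right\} }}$, and your proof of (\ref{b*}) is correct --- the one-step comparison gives $\left\vert b_{B\left( x_{0},\rho \right) }-b_{B\left( x_{0},2\rho \right) }\right\vert \leq 2^{n/q}\left\vert B\left( x_{0},2\rho \right) \right\vert ^{\lambda }\left\Vert b\right\Vert _{LC_{q,\lambda}^{\left\{ x_{0}\right\} }}$, each of the $k\approx 1+\ln \left( r_{1}/r_{2}\right) $ intermediate balls has measure at most $2^{n}\left\vert B\left( x_{0},r_{1}\right) \right\vert $, and $\lambda \geq 0$ lets the largest ball absorb all of them.

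The one genuine gap is in the last step of (\ref{a*}). After inserting $b_{B\left( x_{0},r_{1}\right) }$ you must control $\left\vert B\left( x_{0},r_{1}\right) \right\vert ^{-\lambda }\left\vert b_{B\left( x_{0},r_{1}\right) }-b_{B\left( x_{0},r_{2}\right) }\right\vert $, and you appeal to (\ref{b*}); but (\ref{b*}) is only available for $r_{2}<r_{1}$, and the ``symmetric'' version you allude to necessarily carries the measure of the \emph{larger} ball, i.e. $\left\vert B\left( x_{0},r_{2}\right) \right\vert ^{\lambda }$ when $r_{1}<r_{2}$. Dividing by $\left\vert B\left( x_{0},r_{1}\right) \right\vert ^{\lambda }$ then leaves a factor $\left( r_{2}/r_{1}\right) ^{n\lambda }$, which is not dominated by $1+\left\vert \ln \left( r_{1}/r_{2}\right) \right\vert $ when $\lambda >0$. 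This cannot be patched: for $b\left( y\right) =\left\vert y-x_{0}\right\vert ^{n\lambda }\in LC_{q,\lambda}^{\left\{ x_{0}\right\} }\left( \mathbb{R}^{n}\right) $ one has $b_{B\left( x_{0},r\right) }=c\,r^{n\lambda }$, and the left-hand side of (\ref{a*}) is bounded below by a constant times $\left( r_{2}/r_{1}\right) ^{n\lambda }$ as $r_{1}/r_{2}\rightarrow 0$, so the logarithmic bound genuinely fails in that regime and (\ref{a*}) as stated is correct only for $\lambda =0$ or for $r_{2}\lesssim r_{1}$. Your argument is therefore complete exactly on the range $r_{2}\leq r_{1}$ (plus all of $\lambda =0$), which happens to be the only range in which the paper ever uses the lemma --- see (\ref{*}), (\ref{11*}) and (\ref{f2}), where the ball of integration always has the larger radius --- but you should either restrict (\ref{a*}) to $r_{2}\leq r_{1}$ or record the additional factor $\max \left( 1,\left( r_{2}/r_{1}\right) ^{n\lambda }\right) $.
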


Note that one gets the proof of (\ref{a*}) in a similar way as in Lemma 2.1 of
\cite{LinLu}.

In \cite{DingYZ} the boundedness of commutator operators satisfying condition
(\ref{4}) (by taking $m=1$ there) has been proved.

About the commutator of multilinear operators $T_{\alpha}^{\left(  m\right)  }
$ satisfying condition (\ref{4}), we get the following corresponding theorem.

\begin{theorem}
\label{Teo 5}Let $x_{0}\in{\mathbb{R}^{n}}$, $0<\alpha<mn,$ and $1\leq
p_{i}<\frac{mn}{\alpha}$ with $\frac{1}{q}=\sum \limits_{i=1}^{m}\frac{1}%
{p_{i}}+\sum \limits_{i=1}^{m}\frac{1}{q_{i}}-\frac{\alpha}{n}$ and
$\overrightarrow{b}\in LC_{q_{i},\lambda_{i}}^{\left \{  x_{0}\right \}
}({\mathbb{R}^{n}})$ for $0\leq \lambda_{i}<\frac{1}{n}$, $i=1,\ldots,m$. Let
also, $T_{\alpha}^{\left(  m\right)  }$ $\left(  m\in%
\mathbb{N}
\right)  $ be a multilinear operator satisfying condition (\ref{4}), bounded
from $L_{p_{1}}\times \cdots \times L_{p_{m}}$ into $L_{q}$. Then the
inequality
\begin{align}
\Vert T_{\alpha,\overrightarrow{b}}^{\left(  m\right)  }\left(
\overrightarrow{f}\right)  \Vert_{L_{q}(B_{r})}  &  \lesssim%
{\displaystyle \prod \limits_{i=1}^{m}}
\Vert \overrightarrow{b}\Vert_{LC_{q_{i},\lambda_{i}}^{\left \{  x_{0}\right \}
}}r^{\frac{n}{q}}\nonumber \\
&  \times \int \limits_{2r}^{\infty}\left(  1+\ln \frac{t}{r}\right)
^{m}t^{n\left(  -\frac{1}{q}+%
{\displaystyle \sum \limits_{i=1}^{m}}
\lambda_{i}+%
{\displaystyle \sum \limits_{i=1}^{m}}
\frac{1}{q_{i}}\right)  -1}%
{\displaystyle \prod \limits_{i=1}^{m}}
\Vert f_{i}\Vert_{L_{p_{i}}(B_{t})}dt \label{200}%
\end{align}
holds for any ball $B_{r}=B(x_{0},r)$ and for all $\overrightarrow{f}\in
L_{p_{1}}^{loc}\left(  {\mathbb{R}^{n}}\right)  \times \cdots \times L_{p_{m}%
}^{loc}\left(  {\mathbb{R}^{n}}\right)  $.
\end{theorem}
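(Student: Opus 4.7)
My plan is to adapt the proof of Theorem~\ref{Teo2}, supplemented by a ``centering'' decomposition of the commutator factors. Fix $B_r = B(x_0, r)$, write $f_i = f_i^0 + f_i^\infty$ with $f_i^0 = f_i \chi_{B_{2r}}$, and rewrite
\[
b_i(x) - b_i(y_i) = \bigl(b_i(x) - b_{i, B_{2r}}\bigr) - \bigl(b_i(y_i) - b_{i, B_{2r}}\bigr),
\]
where $b_{i, B_{2r}}$ denotes the average of $b_i$ over $B_{2r}$. Expanding $\prod_{i=1}^{m}[b_i(x) - b_i(y_i)]$ over subsets $S \subseteq \{1,\ldots,m\}$ and simultaneously decomposing each $f_i$ splits $T_{\alpha, \overrightarrow{b}}^{(m)}(\overrightarrow{f})$ on $B_r$ into $4^m$ pieces. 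Following the convention of Theorem~\ref{Teo2}, I would present the argument in detail only for $m=2$; the general case is structurally identical.

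For the fully local piece $T_{\alpha, \overrightarrow{b}}^{(2)}(f_1^0, f_2^0)$, I would pull each factor $b_i(x) - b_{i, B_{2r}}$ ($i \in S$) out of the integral in $\overrightarrow{y}$ and recognize the remainder as $T_\alpha^{(2)}$ applied either to $f_i^0$ ($i \in S$) or to $(b_i - b_{i, B_{2r}}) f_i^0$ ($i \notin S$). Setting $\tilde{q}$ so that $\frac{1}{\tilde{q}} = \frac{1}{p_1} + \frac{1}{p_2} - \frac{\alpha}{n}$ and thus $\frac{1}{q} = \frac{1}{\tilde{q}} + \frac{1}{q_1} + \frac{1}{q_2}$, I would invoke the Lebesgue boundedness of $T_\alpha^{(2)}$ into $L_{\tilde{q}}$, then H\"older in $L_q(B_r)$ with exponents $(\tilde{q}, q_1, q_2)$, and absorb the $b_i$-factors via estimate (\ref{c*}) of Lemma~\ref{Lemma 4}. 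A padding step of the form $r^{n/q} \approx r^{n/q}\int_{2r}^{\infty} t^{-n/q - 1}\,dt$ together with the monotonicity $\|f_i\|_{L_{p_i}(B_{2r})} \leq \|f_i\|_{L_{p_i}(B_t)}$ for $t \geq 2r$ then re-packages this contribution into the stated integral.

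For the remaining pieces, which contain at least one $f_i^\infty$, the size hypothesis (\ref{4}) provides a pointwise bound on $B_r$ by an iterated integral against $|(x_0 - y_1, x_0 - y_2)|^{-2n + \alpha}$. I would estimate this kernel by $\prod_i |x_0 - y_i|^{-n + \alpha/2}$ and partition $(B_{2r})^c$ into dyadic annuli $B_{2^{j+1}r} \setminus B_{2^j r}$. On each annulus, H\"older's inequality paired with Lemma~\ref{Lemma 4}---inequality (\ref{c*}) for the $L_{q_i}$-norm of $b_i(y_i) - b_{i, B_{2r}}$ on $B_{2^{j+1}r}$, and (\ref{b*}) supplying the crucial factor $1 + \ln(2^{j+1}) \lesssim j + 1$ for each centering---yields a geometric series in $j$ which, under the identification $t \approx 2^j r$, reassembles precisely into
\[
\int_{2r}^{\infty} \Bigl(1 + \ln \tfrac{t}{r}\Bigr)^m t^{n\left(-\frac{1}{q} + \sum_{i=1}^{m} \lambda_i + \sum_{i=1}^{m} \frac{1}{q_i}\right) - 1} \prod_{i=1}^{m} \|f_i\|_{L_{p_i}(B_t)}\,dt,
\]
in the same spirit as the estimate of $I_4$ in Theorem~\ref{Teo2}.

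The main technical obstacle is the exponent bookkeeping on the annular pieces: one has to verify that the various powers of $2^j r$ arising from the kernel bound $|x_0 - y_i|^{-n + \alpha/2}$, from the H\"older exponents $1 - 1/p_i$ applied to $f_i$, and from the factor $|B_{2^{j+1}r}|^{1/q_i + \lambda_i}$ of (\ref{c*}) collapse to exactly $t^{n(-1/q + \sum \lambda_i + \sum 1/q_i) - 1}$ after the conversion $2^j r \approx t$. The accompanying logarithmic count is automatic: the subset expansion creates at most $m$ centered differences per term, so (\ref{b*}) is invoked at most $m$ times and delivers $(1 + \ln t/r)^m$ and no higher power.
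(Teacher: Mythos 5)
Your proposal follows essentially the same route as the paper's proof: the subset expansion of $\prod_{i}[b_i(x)-b_i(y_i)]$ about a fixed average, the $f_i^0/f_i^\infty$ splitting into $4^m$ pieces, Lebesgue boundedness of $T_\alpha^{(m)}$ combined with H\"older's inequality and (\ref{c*}) for the fully local piece, and the size condition (\ref{4}) with dyadic annuli and Lemma \ref{Lemma 4} for every piece containing an $f_i^\infty$. The only cosmetic difference is that you center the $b_i$ at their averages over $B_{2r}$ rather than $B_r$, which merely bypasses the paper's comparison step (\ref{*}).
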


\begin{proof}
As in the proof of Theorem \ref{Teo2}, we consider only the situation when
$m=2$. Actually, a similar procedure works for all $m\in%
\mathbb{N}
$. Thus, without loss of generality, it is sufficient to show that the
conclusion holds for $T_{\alpha,\overrightarrow{b}}^{\left(  2\right)
}\left(  \overrightarrow{f}\right)  =T_{\alpha,\left(  b_{1},b_{2}\right)
}^{\left(  2\right)  }\left(  f_{1},f_{2}\right)  $.

We just consider the case $p_{i},q_{i}>1$ for $i=1,2$. For any $x_{0}%
\in{\mathbb{R}^{n}}$, set $B_{r}=B\left(  x_{0},r\right)  $ for the ball
centered at $x_{0}$ and of radius $r$ and $B_{2r}=B\left(  x_{0},2r\right)  $.
Thus, we have the following decomposition,%
\begin{align*}
T_{\alpha,\left(  b_{1},b_{2}\right)  }^{\left(  2\right)  }\left(
f_{1},f_{2}\right)  \left(  x\right)   &  =\left(  b_{1}-\left(  b_{1}\right)
_{B_{r}}\right)  \left(  b_{2}-\left(  b_{2}\right)  _{B_{r}}\right)
T_{\alpha}^{\left(  2\right)  }\left(  f_{1},f_{2}\right)  \left(  x\right) \\
&  -\left(  b_{1}-\left(  b_{1}\right)  _{B_{r}}\right)  T_{\alpha}^{\left(
2\right)  }\left[  f_{1},\left(  b_{2}\left(  \cdot \right)  -\left(
b_{2}\right)  _{B_{r}}\right)  f_{2}\right]  \left(  x\right) \\
&  -\left(  b_{2}-\left(  b_{2}\right)  _{B_{r}}\right)  T_{\alpha}^{\left(
2\right)  }\left[  \left(  b_{1}\left(  \cdot \right)  -\left(  b_{1}\right)
_{B_{r}}\right)  f_{1},f_{2}\right]  \left(  x\right) \\
&  +T_{\alpha}^{\left(  2\right)  }\left[  \left(  b_{1}\left(  \cdot \right)
-\left(  b_{1}\right)  _{B_{r}}\right)  f_{1},\left(  b_{2}\left(
\cdot \right)  -\left(  b_{2}\right)  _{B_{r}}\right)  f_{2}\right]  \left(
x\right) \\
&  \equiv H_{1}\left(  x\right)  +H_{2}\left(  x\right)  +H_{3}\left(
x\right)  +H_{4}\left(  x\right)  .
\end{align*}
Thus,%
\begin{align}
\left \Vert T_{\alpha,\left(  b_{1},b_{2}\right)  }^{\left(  2\right)  }\left(
f_{1},f_{2}\right)  \right \Vert _{L_{q}\left(  B_{r}\right)  }  &  =\left(
{\displaystyle \int \limits_{B_{r}}}
\left \vert T_{\alpha,\left(  b_{1},b_{2}\right)  }^{\left(  2\right)  }\left(
f_{1},f_{2}\right)  \left(  x\right)  \right \vert ^{q}dx\right)  ^{\frac{1}%
{q}}\nonumber \\
&  \leq%
{\displaystyle \sum \limits_{i=1}^{4}}
\left(
{\displaystyle \int \limits_{B_{r}}}
\left \vert H_{i}\left(  x\right)  \right \vert ^{q}dx\right)  ^{\frac{1}{q}}=%
{\displaystyle \sum \limits_{i=1}^{4}}
G_{i}. \label{100}%
\end{align}

One observes that the estimate of $G_{2}$ is analogous to that of $G_{3}$.
Thus, we will only estimate $G_{1}$, $G_{2}$ and $G_{4}$.

Indeed, we also decompose $f_{i}$ as $f_{i}\left(  y_{i}\right)  =$
$f_{i}\left(  y_{i}\right)  \chi_{B_{2r}}+f_{i}\left(  y_{i}\right)
\chi_{\left(  B_{2r}\right)  ^{c}}$ for $i=1,2$. And, we write $f_{1}%
=f_{1}^{0}+f_{1}^{\infty}$ and $f_{2}=f_{2}^{0}+f_{2}^{\infty}$, where
\ $f_{i}^{0}=f_{i}\chi_{B_{2r}}$, $f_{i}^{\infty}=f_{i}\chi_{\left(
B_{2r}\right)  ^{c}}$, for $i=1,2$.

$\left(  i\right)  $ For $G_{1}=\left \Vert \left(  b_{1}-\left(  b_{1}\right)
_{B_{r}}\right)  \left(  b_{2}-\left(  b_{2}\right)  _{B_{r}}\right)
T_{\alpha,\left(  b_{1},b_{2}\right)  }^{\left(  2\right)  }\left(  f_{1}%
^{0},f_{2}^{0}\right)  \right \Vert _{L_{q}\left(  B_{r}\right)  }$, we
decompose it into four parts as follows:%
\begin{align*}
G_{1}  &  \lesssim \left \Vert \left(  b_{1}-\left(  b_{1}\right)  _{B_{r}%
}\right)  \left(  b_{2}-\left(  b_{2}\right)  _{B_{r}}\right)  T_{\alpha
}^{\left(  2\right)  }\left(  f_{1}^{0},f_{2}^{0}\right)  \right \Vert
_{L_{q}\left(  B_{r}\right)  }\\
&  +\left \Vert \left(  b_{1}-\left(  b_{1}\right)  _{B_{r}}\right)  T_{\alpha
}^{\left(  2\right)  }\left[  f_{1}^{0},\left(  b_{2}-\left(  b_{2}\right)
_{B_{r}}\right)  f_{2}^{0}\right]  \right \Vert _{L_{q}\left(  B_{r}\right)
}\\
&  +\left \Vert \left(  b_{2}-\left(  b_{2}\right)  _{B_{r}}\right)  T_{\alpha
}^{\left(  2\right)  }\left[  \left(  b_{1}-\left(  b_{1}\right)  _{B_{r}%
}\right)  f_{1}^{0},f_{2}^{0}\right]  \right \Vert _{L_{q}\left(  B_{r}\right)
}\\
&  +\left \Vert T_{\alpha}^{\left(  2\right)  }\left[  \left(  b_{1}-\left(
b_{1}\right)  _{B_{r}}\right)  f_{1}^{0},\left(  b_{2}-\left(  b_{2}\right)
_{B_{r}}\right)  f_{2}^{0}\right]  \right \Vert _{L_{q}\left(  B_{r}\right)
}\\
&  \equiv G_{11}+G_{12}+G_{13}+G_{14}.
\end{align*}

Firstly, $1<\overline{p},\overline{q}<\infty$, such that $\frac{1}%
{\overline{p}}=\frac{1}{p_{1}}+\frac{1}{p_{2}}$ and $\frac{1}{\overline{q}%
}=\frac{1}{\overline{p}}-\frac{\alpha}{n},\frac{1}{q}=\frac{1}{\overline{r}%
}+\frac{1}{\overline{q}},\frac{1}{\overline{r}}=\frac{1}{q_{1}}+\frac{1}%
{q_{2}} $. Then, using H\"{o}lder's inequality and from the boundedness of
$T_{\alpha}^{\left(  2\right)  }$ from $L_{p_{1}}\times L_{p_{2}}$ into
$L_{\overline{q}}$ it follows that:%
\begin{align*}
G_{11}  &  \lesssim \left \Vert \left(  b_{1}-\left(  b_{1}\right)  _{B_{r}%
}\right)  \left(  b_{2}-\left(  b_{2}\right)  _{B_{r}}\right)  \right \Vert
_{L_{\overline{r}}\left(  B_{r}\right)  }\left \Vert T_{\alpha}^{\left(
2\right)  }\left(  f_{1}^{0},f_{2}^{0}\right)  \right \Vert _{L_{\overline{q}%
}\left(  B_{r}\right)  }\\
&  \lesssim \left \Vert b_{1}-\left(  b_{1}\right)  _{B}\right \Vert _{L_{q_{1}%
}\left(  B_{r}\right)  }\left \Vert b_{2}-\left(  b_{2}\right)  _{B}\right \Vert
_{L_{q_{2}}\left(  B_{r}\right)  }\left \Vert f_{1}\right \Vert _{L_{p_{1}%
}\left(  B_{2r}\right)  }\left \Vert f_{2}\right \Vert _{L_{p_{2}}\left(
B_{2r}\right)  }\\
&  \lesssim \left \Vert b_{1}-\left(  b_{1}\right)  _{B_{r}}\right \Vert
_{L_{q_{1}}\left(  B_{r}\right)  }\left \Vert b_{2}-\left(  b_{2}\right)
_{B_{r}}\right \Vert _{L_{q_{2}}\left(  B_{r}\right)  }r^{n\left(  \frac
{1}{p_{1}}+\frac{1}{p_{2}}-\frac{\alpha}{n}\right)  }\\
&  \times \int \limits_{2r}^{\infty}\prod \limits_{i=1}^{2}\left \Vert
f_{i}\right \Vert _{L_{p_{i}}\left(  B_{t}\right)  }\frac{dt}{t^{n\left(
\frac{1}{p_{1}}+\frac{1}{p_{2}}\right)  +1-\alpha}}\\
&  \lesssim \Vert b_{1}\Vert_{LC_{q_{1},\lambda_{1}}^{\left \{  x_{0}\right \}
}}\Vert b_{2}\Vert_{LC_{q_{2},\lambda_{2}}^{\left \{  x_{0}\right \}  }%
}r^{n\left(  \frac{1}{q_{1}}+\frac{1}{q_{2}}+\frac{1}{p_{1}}+\frac{1}{p_{2}%
}-\frac{\alpha}{n}\right)  }\\
&  \times \int \limits_{2r}^{\infty}\left(  1+\ln \frac{t}{r}\right)
^{2}t^{n\left(  \lambda_{1}+\lambda_{2}\right)  -n\left(  \frac{1}{p_{1}%
}+\frac{1}{p_{2}}\right)  -1+\alpha}\prod \limits_{i=1}^{2}\left \Vert
f_{i}\right \Vert _{L_{p_{i}}\left(  B_{t}\right)  }dt\\
&  \lesssim \Vert b_{1}\Vert_{LC_{q_{1},\lambda_{1}}^{\left \{  x_{0}\right \}
}}\Vert b_{2}\Vert_{LC_{q_{2},\lambda_{2}}^{\left \{  x_{0}\right \}  }}%
r^{\frac{n}{q}}\\
&  \times \int \limits_{2r}^{\infty}\left(  1+\ln \frac{t}{r}\right)
^{2}t^{n\left(  \lambda_{1}+\lambda_{2}\right)  -\frac{n}{q}+n\left(  \frac
{1}{q_{1}}+\frac{1}{q_{2}}\right)  -1}\prod \limits_{i=1}^{2}\left \Vert
f_{i}\right \Vert _{L_{p_{i}}\left(  B_{t}\right)  }dt.
\end{align*}

Secondly, for $G_{12}$, let $1<\tau<\infty$, such that $\frac{1}{q}=\frac
{1}{q_{1}}+\frac{1}{\tau}$. Then similar to the estimates for $G_{11}$, we
have%
\begin{align*}
G_{12}  &  \lesssim \left \Vert b_{1}-\left(  b_{1}\right)  _{B_{r}}\right \Vert
_{L_{q_{1}}\left(  B_{r}\right)  }\left \Vert T_{\alpha}^{\left(  2\right)  }
\left[  f_{1}^{0},\left(  b_{2}-\left(  b_{2}\right)  _{B_{r}}\right)
f_{2}^{0}\right]  \right \Vert _{L_{\tau}\left(  B_{r}\right)  }\\
&  \lesssim \left \Vert b_{1}-\left(  b_{1}\right)  _{B_{r}}\right \Vert
_{L_{q_{1}}\left(  B_{r}\right)  }\left \Vert f_{1}^{0}\right \Vert
_{_{L_{p_{1}}\left(
\mathbb{R}
^{n}\right)  }}\left \Vert \left(  b_{2}-\left(  b_{2}\right)  _{B_{r}}\right)
f_{2}^{0}\right \Vert _{L_{k}\left(
\mathbb{R}
^{n}\right)  }\\
&  \lesssim \left \Vert b_{1}-\left(  b_{1}\right)  _{B_{r}}\right \Vert
_{L_{q_{1}}\left(  B_{r}\right)  }\left \Vert b_{2}-\left(  b_{2}\right)
_{B_{r}}\right \Vert _{L_{q_{2}}\left(  B_{2r}\right)  }\left \Vert
f_{1}\right \Vert _{L_{p_{1}}\left(  B_{2r}\right)  }\left \Vert f_{2}%
\right \Vert _{L_{p_{2}}\left(  B_{2r}\right)  },
\end{align*}
where $1<k<\frac{2n}{\alpha}$, such that $\frac{1}{k}=\frac{1}{p_{2}}+\frac
{1}{q_{2}}=\frac{1}{\tau}-\frac{1}{p_{1}}+\frac{\alpha}{n}$. From Lemma
\ref{Lemma 4}, it is easy to see that%
\[
\left \Vert b_{i}-\left(  b_{i}\right)  _{B_{r}}\right \Vert _{L_{q_{i}}\left(
B_{r}\right)  }\leq Cr^{\frac{n}{q_{i}}+n\lambda_{i}}\left \Vert b_{i}%
\right \Vert _{LC_{q_{i},\lambda_{i}}^{\left \{  x_{0}\right \}  }},
\]
and%
\begin{align}
\left \Vert b_{i}-\left(  b_{i}\right)  _{B_{r}}\right \Vert _{L_{q_{i}}\left(
B_{2r}\right)  }  &  \leq \left \Vert b_{i}-\left(  b_{i}\right)  _{B_{2r}%
}\right \Vert _{L_{q_{i}}\left(  B_{2r}\right)  }+\left \Vert \left(
b_{i}\right)  _{B_{r}}-\left(  b_{i}\right)  _{B_{2r}}\right \Vert _{L_{q_{i}%
}\left(  B_{2r}\right)  }\nonumber \\
&  \lesssim r^{\frac{n}{q_{i}}+n\lambda_{i}}\left \Vert b_{i}\right \Vert
_{LC_{q_{i},\lambda_{i}}^{\left \{  x_{0}\right \}  }}, \label{*}%
\end{align}
for $i=1$, $2$. Hence, we get%
\begin{align*}
G_{12}  &  \lesssim \Vert b_{1}\Vert_{LC_{q_{1},\lambda_{1}}^{\left \{
x_{0}\right \}  }}\Vert b_{2}\Vert_{LC_{q_{2},\lambda_{2}}^{\left \{
x_{0}\right \}  }}r^{\frac{n}{q}}\\
&  \times \int \limits_{2r}^{\infty}\left(  1+\ln \frac{t}{r}\right)
^{2}t^{n\left(  \lambda_{1}+\lambda_{2}\right)  -\frac{n}{q}+n\left(  \frac
{1}{q_{1}}+\frac{1}{q_{2}}\right)  -1}\prod \limits_{i=1}^{2}\left \Vert
f_{i}\right \Vert _{L_{p_{i}}\left(  B_{t}\right)  }dt.
\end{align*}

Similarly, $G_{13}$ has the same estimate as above, here we omit the details,
thus the inequality%
\begin{align*}
G_{13}  &  \lesssim \Vert b_{1}\Vert_{LC_{q_{1},\lambda_{1}}^{\left \{
x_{0}\right \}  }}\Vert b_{2}\Vert_{LC_{q_{2},\lambda_{2}}^{\left \{
x_{0}\right \}  }}r^{\frac{n}{q}}\\
&  \times \int \limits_{2r}^{\infty}\left(  1+\ln \frac{t}{r}\right)
^{2}t^{n\left(  \lambda_{1}+\lambda_{2}\right)  -\frac{n}{q}+n\left(  \frac
{1}{q_{1}}+\frac{1}{q_{2}}\right)  -1}\prod \limits_{i=1}^{2}\left \Vert
f_{i}\right \Vert _{L_{p_{i}}\left(  B_{t}\right)  }dt
\end{align*}
is valid.

At last, we consider the term $G_{14}$. Let $1<\tau_{1},\tau_{2}<\frac
{2n}{\alpha}$, such that $\frac{1}{\tau_{1}}=\frac{1}{p_{1}}+\frac{1}{q_{1}},$
$\frac{1}{\tau_{2}}=\frac{1}{p_{2}}+\frac{1}{q_{2}}$ and $\frac{1}{q}=\frac
{1}{\tau_{1}}+\frac{1}{\tau_{2}}-\frac{\alpha}{n}$. Then by the boundedness of
$T_{\alpha}^{\left(  2\right)  }$ from $L_{\tau_{1}}\times L_{\tau_{2}}$ into
$L_{q}$, H\"{o}lder's inequality and (\ref{*}), we obtain%
\begin{align*}
G_{14}  &  \lesssim \left \Vert \left(  b_{1}-\left(  b_{1}\right)  _{B_{r}%
}\right)  f_{1}^{0}\right \Vert _{L_{\tau_{1}}\left(  B_{r}\right)  }\left \Vert
\left(  b_{2}-\left(  b_{2}\right)  _{B_{r}}\right)  f_{2}^{0}\right \Vert
_{L_{\tau_{2}}\left(  B_{r}\right)  }\\
&  \lesssim \left \Vert b_{1}-\left(  b_{1}\right)  _{B_{r}}\right \Vert
_{L_{q_{1}}\left(  B_{2r}\right)  }\left \Vert b_{2}-\left(  b_{2}\right)
_{B_{r}}\right \Vert _{L_{q_{2}}\left(  B_{2r}\right)  }\left \Vert
f_{1}\right \Vert _{L_{p_{1}}\left(  B_{2r}\right)  }\left \Vert f_{2}%
\right \Vert _{L_{p_{2}}\left(  B_{2r}\right)  }\\
&  \lesssim \Vert b_{1}\Vert_{LC_{q_{1},\lambda_{1}}^{\left \{  x_{0}\right \}
}}\Vert b_{2}\Vert_{LC_{q_{2},\lambda_{2}}^{\left \{  x_{0}\right \}  }}%
r^{\frac{n}{q}}\\
&  \times \int \limits_{2r}^{\infty}\left(  1+\ln \frac{t}{r}\right)
^{2}t^{n\left(  \lambda_{1}+\lambda_{2}\right)  -\frac{n}{q}+n\left(  \frac
{1}{q_{1}}+\frac{1}{q_{2}}\right)  -1}\prod \limits_{i=1}^{2}\left \Vert
f_{i}\right \Vert _{L_{p_{i}}\left(  B_{t}\right)  }dt.
\end{align*}

Combining all the estimates of $G_{11}$, $G_{12}$, $G_{13}$, $G_{14}$; we get%
\begin{align*}
G_{1}  &  =\left \Vert T_{\alpha,\left(  b_{1},b_{2}\right)  }^{\left(
2\right)  }\left(  f_{1}^{0},f_{2}^{0}\right)  \right \Vert _{L_{q}\left(
B_{r}\right)  }\lesssim \Vert b_{1}\Vert_{LC_{q_{1},\lambda_{1}}^{\left \{
x_{0}\right \}  }}\Vert b_{2}\Vert_{LC_{q_{2},\lambda_{2}}^{\left \{
x_{0}\right \}  }}r^{\frac{n}{q}}\\
&  \times \int \limits_{2r}^{\infty}\left(  1+\ln \frac{t}{r}\right)
^{2}t^{n\left(  \lambda_{1}+\lambda_{2}\right)  -\frac{n}{q}+n\left(  \frac
{1}{q_{1}}+\frac{1}{q_{2}}\right)  -1}\prod \limits_{i=1}^{2}\left \Vert
f_{i}\right \Vert _{L_{p_{i}}\left(  B_{t}\right)  }dt.
\end{align*}

$\left(  ii\right)  $ For $G_{2}=\left \Vert T_{\alpha,\left(  b_{1}%
,b_{2}\right)  }^{\left(  2\right)  }\left(  f_{1}^{0},f_{2}^{\infty}\right)
\right \Vert _{L_{q}\left(  B_{r}\right)  }$, we also write%
\begin{align*}
G_{2}  &  \lesssim \left \Vert \left(  b_{1}-\left(  b_{1}\right)  _{B_{r}%
}\right)  \left(  b_{2}-\left(  b_{2}\right)  _{B_{r}}\right)  T_{\alpha
}^{\left(  2\right)  }\left(  f_{1}^{0},f_{2}^{\infty}\right)  \right \Vert
_{L_{q}\left(  B_{r}\right)  }\\
&  +\left \Vert \left(  b_{1}-\left(  b_{1}\right)  _{B_{r}}\right)  T_{\alpha
}^{\left(  2\right)  }\left[  f_{1}^{0},\left(  b_{2}-\left(  b_{2}\right)
_{B_{r}}\right)  f_{2}^{\infty}\right]  \right \Vert _{L_{q}\left(
B_{r}\right)  }\\
&  +\left \Vert \left(  b_{2}-\left(  b_{2}\right)  _{B_{r}}\right)  T_{\alpha
}^{\left(  2\right)  }\left[  \left(  b_{1}-\left(  b_{1}\right)  _{B}\right)
f_{1}^{0},f_{2}^{\infty}\right]  \right \Vert _{L_{q}\left(  B_{r}\right)  }\\
&  +\left \Vert T_{\alpha}^{\left(  2\right)  }\left[  \left(  b_{1}-\left(
b_{1}\right)  _{B_{r}}\right)  f_{1}^{0},\left(  b_{2}-\left(  b_{2}\right)
_{B_{r}}\right)  f_{2}^{\infty}\right]  \right \Vert _{L_{q}\left(
B_{r}\right)  }\\
&  \equiv G_{21}+G_{22}+G_{23}+G_{24}.
\end{align*}

Let $1<p_{1},p_{2}<\frac{2n}{\alpha}$, such that $\frac{1}{\overline{p}}%
=\frac{1}{p_{1}}+\frac{1}{p_{2}}$ and $\frac{1}{\overline{q}}=\frac
{1}{\overline{p}}-\frac{\alpha}{n}$. Then, using H\"{o}lder's inequality and
noting that in (\ref{10}) $\frac{1}{q}=\frac{1}{p_{1}}+\frac{1}{p_{2}}%
-\frac{\alpha}{n}$, we have%
\begin{align*}
G_{21}  &  =\left \Vert \left(  b_{1}-\left(  b_{1}\right)  _{B_{r}}\right)
\left(  b_{2}-\left(  b_{2}\right)  _{B_{r}}\right)  T_{\alpha}^{\left(
2\right)  }\left(  f_{1}^{0},f_{2}^{\infty}\right)  \right \Vert _{L_{q}\left(
B_{r}\right)  }\\
&  \lesssim \left \Vert \left(  b_{1}-\left(  b_{1}\right)  _{B_{r}}\right)
\left(  b_{2}-\left(  b_{2}\right)  _{B_{r}}\right)  \right \Vert
_{L_{\overline{r}}\left(  B_{r}\right)  }\left \Vert T_{\alpha}^{\left(
2\right)  }\left(  f_{1}^{0},f_{2}^{\infty}\right)  \right \Vert _{L_{\overline
{q}}\left(  B_{r}\right)  }\\
&  \lesssim \left \Vert b_{1}-\left(  b_{1}\right)  _{B_{r}}\right \Vert
_{L_{q_{1}}\left(  B_{r}\right)  }\left \Vert b_{2}-\left(  b_{2}\right)
_{B_{r}}\right \Vert _{L_{q_{2}}\left(  B_{r}\right)  }\\
&  \times r^{\frac{n}{\overline{q}}}\int \limits_{2r}^{\infty}\left \Vert
f_{1}\right \Vert _{L_{p_{1}}\left(  B_{t}\right)  }\left \Vert f_{2}\right \Vert
_{L_{p_{2}}\left(  B_{t}\right)  }t^{-n\left(  \frac{1}{p_{1}}+\frac{1}{p_{2}%
}\right)  +\alpha-1}dt\\
&  \lesssim \Vert b_{1}\Vert_{LC_{q_{1},\lambda_{1}}^{\left \{  x_{0}\right \}
}}\Vert b_{2}\Vert_{LC_{q_{2},\lambda_{2}}^{\left \{  x_{0}\right \}  }%
}r^{n\left(  \frac{1}{q_{1}}+\frac{1}{q_{2}}\right)  +n\left(  \lambda
_{1}+\lambda_{2}\right)  }r^{n\left(  \frac{1}{p_{1}}+\frac{1}{p_{2}}%
-\frac{\alpha}{n}\right)  }\\
&  \times \int \limits_{2r}^{\infty}\left(  1+\ln \frac{t}{r}\right)
^{2}t^{-n\left(  \frac{1}{p_{1}}+\frac{1}{p_{2}}\right)  +\alpha-1}%
\prod \limits_{i=1}^{2}\left \Vert f_{i}\right \Vert _{L_{p_{i}}\left(
B_{t}\right)  }dt\\
&  \lesssim \Vert b_{1}\Vert_{LC_{q_{1},\lambda_{1}}^{\left \{  x_{0}\right \}
}}\Vert b_{2}\Vert_{LC_{q_{2},\lambda_{2}}^{\left \{  x_{0}\right \}  }}%
r^{\frac{n}{q}}\\
&  \times \int \limits_{2r}^{\infty}\left(  1+\ln \frac{t}{r}\right)
^{2}t^{n\left(  \lambda_{1}+\lambda_{2}\right)  -\frac{n}{q}+n\left(  \frac
{1}{q_{1}}+\frac{1}{q_{2}}\right)  -1}\prod \limits_{i=1}^{2}\left \Vert
f_{i}\right \Vert _{L_{p_{i}}\left(  B_{t}\right)  }dt
\end{align*}
where $\frac{1}{q}=\frac{1}{\overline{r}}+\frac{1}{\overline{q}},\frac
{1}{\overline{r}}=\frac{1}{q_{1}}+\frac{1}{q_{2}}$.

For estimate $G_{22}$, we use condition (\ref{4}) with $m=2$ and we get%
\begin{align*}
&  \left \vert T_{\alpha}^{\left(  2\right)  }\left[  f_{1}^{0},\left(
b_{2}\left(  \cdot \right)  -\left(  b_{2}\right)  _{B_{r}}\right)
f_{2}^{\infty}\right]  \left(  x\right)  \right \vert \\
&  \lesssim%
{\displaystyle \int \limits_{B_{2r}}}
\left \vert f_{1}\left(  y_{1}\right)  \right \vert dy_{1}%
{\displaystyle \int \limits_{\left(  B_{2r}\right)  ^{c}}}
\frac{\left \vert b_{2}\left(  y_{2}\right)  -\left(  b_{2}\right)
_{B}\right \vert \left \vert f_{2}\left(  y_{2}\right)  \right \vert }{\left \vert
x_{0}-y_{2}\right \vert ^{2n-\alpha}}dy_{2}.
\end{align*}
It's obvious that%
\begin{equation}%
{\displaystyle \int \limits_{B_{2r}}}
\left \vert f_{1}\left(  y_{1}\right)  \right \vert dy_{1}\lesssim \left \Vert
f_{1}\right \Vert _{L_{p_{1}}\left(  B_{2r}\right)  }\left \vert B_{2r}%
\right \vert ^{1-\frac{1}{p_{1}}}, \label{11}%
\end{equation}
and using H\"{o}lder's inequality and by (\ref{b*}) and (\ref{*}) we have%
\begin{align}
&
{\displaystyle \int \limits_{\left(  B_{2r}\right)  ^{c}}}
\frac{\left \vert b_{2}\left(  y_{2}\right)  -\left(  b_{2}\right)
_{B}\right \vert \left \vert f_{2}\left(  y_{2}\right)  \right \vert }{\left \vert
x_{0}-y_{2}\right \vert ^{2n-\alpha}}dy_{2}\nonumber \\
&  \lesssim%
{\displaystyle \int \limits_{\left(  B_{2r}\right)  ^{c}}}
\left \vert b_{2}\left(  y_{2}\right)  -\left(  b_{2}\right)  _{B_{r}%
}\right \vert \left \vert f_{2}\left(  y_{2}\right)  \right \vert \left[
{\displaystyle \int \limits_{\left \vert x_{0}-y_{2}\right \vert }^{\infty}}
\frac{dt}{t^{2n-\alpha+1}}\right]  dy_{2}\nonumber \\
&  \lesssim%
{\displaystyle \int \limits_{2r}^{\infty}}
\left \Vert b_{2}\left(  y_{2}\right)  -\left(  b_{2}\right)  _{B_{t}%
}\right \Vert _{L_{q_{2}}\left(  B_{t}\right)  }\left \Vert f_{2}\right \Vert
_{L_{p_{2}}\left(  B_{t}\right)  }\left \vert B_{t}\right \vert ^{1-\left(
\frac{1}{p_{2}}+\frac{1}{q_{2}}\right)  }\frac{dt}{t^{2n-\alpha+1}}\nonumber \\
&  +%
{\displaystyle \int \limits_{2r}^{\infty}}
\left \vert \left(  b_{2}\right)  _{B_{t}}-\left(  b_{2}\right)  _{B_{r}%
}\right \vert \left \Vert f_{2}\right \Vert _{L_{p_{2}}\left(  B_{t}\right)
}\left \vert B_{t}\right \vert ^{1-\frac{1}{p_{2}}}\frac{dt}{t^{2n-\alpha+1}%
}\nonumber \\
&  \lesssim \Vert b_{2}\Vert_{LC_{q_{2},\lambda_{2}}^{\left \{  x_{0}\right \}
}}%
{\displaystyle \int \limits_{2r}^{\infty}}
\left \vert B_{t}\right \vert ^{\frac{1}{q_{2}}+\lambda_{2}}\left \Vert
f_{2}\right \Vert _{L_{p_{2}}\left(  B_{t}\right)  }\left \vert B_{t}\right \vert
^{1-\left(  \frac{1}{p_{2}}+\frac{1}{q_{2}}\right)  }\frac{dt}{t^{2n-\alpha
+1}}\nonumber \\
&  +\Vert b_{2}\Vert_{LC_{q_{2},\lambda_{2}}^{\left \{  x_{0}\right \}  }}%
\int \limits_{2r}^{\infty}\left(  1+\ln \frac{t}{r}\right)  \left \vert
B_{t}\right \vert ^{\lambda_{2}}\left \Vert f_{2}\right \Vert _{L_{p_{2}}\left(
B_{t}\right)  }\left \vert B_{t}\right \vert ^{1-\frac{1}{p_{2}}}\frac
{dt}{t^{2n-\alpha+1}}\nonumber \\
&  \lesssim \Vert b_{2}\Vert_{LC_{q_{2},\lambda_{2}}^{\left \{  x_{0}\right \}
}}\int \limits_{2r}^{\infty}\left(  1+\ln \frac{t}{r}\right)  ^{2}%
t^{-n+n\lambda_{2}-\frac{n}{p_{2}}-1+\alpha}\left \Vert f_{2}\right \Vert
_{L_{p_{2}}\left(  B_{t}\right)  }dt. \label{11*}%
\end{align}
Hence, by (\ref{11}) and (\ref{11*}), it follows that:%
\begin{align*}
&  \left \vert T_{\alpha}^{\left(  2\right)  }\left[  f_{1}^{0},\left(
b_{2}\left(  \cdot \right)  -\left(  b_{2}\right)  _{B_{r}}\right)
f_{2}^{\infty}\right]  \left(  x\right)  \right \vert \\
&  \lesssim \Vert b_{2}\Vert_{LC_{q_{2},\lambda_{2}}^{\left \{  x_{0}\right \}
}}\left \Vert f_{1}\right \Vert _{L_{p_{1}}\left(  B_{2r}\right)  }\left \vert
B_{2r}\right \vert ^{1-\frac{1}{p_{1}}}\int \limits_{2r}^{\infty}\left(
1+\ln \frac{t}{r}\right)  ^{2}t^{-n+n\lambda_{2}-\frac{n}{p_{2}}-1+\alpha
}\left \Vert f_{2}\right \Vert _{L_{p_{2}}\left(  B_{t}\right)  }dt\\
&  \lesssim \Vert b_{2}\Vert_{LC_{q_{2},\lambda_{2}}^{\left \{  x_{0}\right \}
}}\int \limits_{2r}^{\infty}\left(  1+\ln \frac{t}{r}\right)  ^{2}%
t^{n\lambda_{2}-n\left(  \frac{1}{p_{1}}+\frac{1}{p_{2}}\right)  -1+\alpha
}\prod \limits_{i=1}^{2}\left \Vert f_{i}\right \Vert _{L_{p_{i}}\left(
B_{t}\right)  }dt.
\end{align*}
Thus, let $1<\tau<\infty$, such that $\frac{1}{q}=\frac{1}{q_{1}}+\frac
{1}{\tau}$. Then similar to the estimates for $G_{11}$, we have%
\begin{align*}
G_{22}  &  =\left \Vert \left(  b_{1}-\left(  b_{1}\right)  _{B_{r}}\right)
T_{\alpha}^{\left(  2\right)  }\left[  f_{1}^{0},\left(  b_{2}-\left(
b_{2}\right)  _{B_{r}}\right)  f_{2}^{\infty}\right]  \right \Vert
_{L_{q}\left(  B_{r}\right)  }\\
&  \lesssim \left \Vert b_{1}-\left(  b_{1}\right)  _{B_{r}}\right \Vert
_{L_{q_{1}}\left(  B_{r}\right)  }\left \Vert T_{\alpha}^{\left(  2\right)  }
\left[  f_{1}^{0},\left(  b_{2}-\left(  b_{2}\right)  _{B_{r}}\right)
f_{2}^{\infty}\right]  \right \Vert _{L_{\tau}\left(  B_{r}\right)  }\\
&  \lesssim \Vert b_{1}\Vert_{LC_{q_{1},\lambda_{1}}^{\left \{  x_{0}\right \}
}}\Vert b_{2}\Vert_{LC_{q_{2},\lambda_{2}}^{\left \{  x_{0}\right \}  }%
}\left \vert B_{r}\right \vert ^{\lambda_{1}+\frac{1}{q_{1}}+\frac{1}{\tau}}\\
&  \times \int \limits_{2r}^{\infty}\left(  1+\ln \frac{t}{r}\right)
^{2}t^{n\lambda_{2}-n\left(  \frac{1}{p_{1}}+\frac{1}{p_{2}}\right)
-1+\alpha}\prod \limits_{i=1}^{2}\left \Vert f_{i}\right \Vert _{L_{p_{i}}\left(
B_{t}\right)  }dt\\
&  \lesssim \Vert b_{1}\Vert_{LC_{q_{1},\lambda_{1}}^{\left \{  x_{0}\right \}
}}\Vert b_{2}\Vert_{LC_{q_{2},\lambda_{2}}^{\left \{  x_{0}\right \}  }}%
r^{\frac{n}{q}}\\
&  \times \int \limits_{2r}^{\infty}\left(  1+\ln \frac{t}{r}\right)
^{2}t^{n\left(  \lambda_{1}+\lambda_{2}\right)  -\frac{n}{q}+n\left(  \frac
{1}{q_{1}}+\frac{1}{q_{2}}\right)  -1}\prod \limits_{i=1}^{2}\left \Vert
f_{i}\right \Vert _{L_{p_{i}}\left(  B_{t}\right)  }dt.
\end{align*}

Similarly, $G_{23}$ has the same estimate above, here we omit the details,
thus the inequality%
\begin{align*}
G_{23}  &  =\left \Vert \left(  b_{2}-\left(  b_{2}\right)  _{B_{r}}\right)
T_{\alpha}^{\left(  2\right)  }\left[  \left(  b_{1}-\left(  b_{1}\right)
_{B_{r}}\right)  f_{1}^{0},f_{2}^{\infty}\right]  \right \Vert _{L_{q}\left(
B_{r}\right)  }\\
&  \lesssim \Vert b_{1}\Vert_{LC_{q_{1},\lambda_{1}}^{\left \{  x_{0}\right \}
}}\Vert b_{2}\Vert_{LC_{q_{2},\lambda_{2}}^{\left \{  x_{0}\right \}  }}%
r^{\frac{n}{q}}\\
&  \times \int \limits_{2r}^{\infty}\left(  1+\ln \frac{t}{r}\right)
^{2}t^{n\left(  \lambda_{1}+\lambda_{2}\right)  -\frac{n}{q}+n\left(  \frac
{1}{q_{1}}+\frac{1}{q_{2}}\right)  -1}\prod \limits_{i=1}^{2}\left \Vert
f_{i}\right \Vert _{L_{p_{i}}\left(  B_{t}\right)  }dt
\end{align*}
is valid.

Now, using the condition (\ref{4}) with $m=2$, we have%
\begin{align*}
&  \left \vert T_{\alpha}^{\left(  2\right)  }\left[  \left(  b_{1}-\left(
b_{1}\right)  _{B_{r}}\right)  f_{1}^{0},\left(  b_{2}-\left(  b_{2}\right)
_{B_{r}}\right)  f_{2}^{\infty}\right]  \left(  x\right)  \right \vert \\
&  \lesssim%
{\displaystyle \int \limits_{B_{2r}}}
\left \vert b_{1}\left(  y_{1}\right)  -\left(  b_{1}\right)  _{B_{r}%
}\right \vert \left \vert f_{1}\left(  y_{1}\right)  \right \vert dy_{1}%
{\displaystyle \int \limits_{\left(  B_{2r}\right)  ^{c}}}
\frac{\left \vert b_{2}\left(  y_{2}\right)  -\left(  b_{2}\right)  _{B_{r}%
}\right \vert \left \vert f_{2}\left(  y_{2}\right)  \right \vert }{\left \vert
x_{0}-y_{2}\right \vert ^{2n-\alpha}}dy_{2}.
\end{align*}
It's obvious that from H\"{o}lder's inequality and (\ref{c*})
\begin{equation}%
{\displaystyle \int \limits_{B_{2r}}}
\left \vert b_{1}\left(  y_{1}\right)  -\left(  b_{1}\right)  _{B_{r}%
}\right \vert \left \vert f_{1}\left(  y_{1}\right)  \right \vert dy_{1}%
\lesssim \Vert b_{1}\Vert_{LC_{q_{1},\lambda_{1}}^{\left \{  x_{0}\right \}  }%
}\left \vert B_{r}\right \vert ^{\lambda_{1}+1-\frac{1}{p_{1}}}\left \Vert
f_{1}\right \Vert _{L_{p_{1}}\left(  B_{2r}\right)  }. \label{12}%
\end{equation}
Then, by (\ref{11*}) and (\ref{12}) we have%
\begin{align*}
&  \left \vert T_{\alpha}^{\left(  2\right)  }\left[  \left(  b_{1}-\left(
b_{1}\right)  _{B_{r}}\right)  f_{1}^{0},\left(  b_{2}-\left(  b_{2}\right)
_{B_{r}}\right)  f_{2}^{\infty}\right]  \left(  x\right)  \right \vert \\
&  \leq \Vert b_{1}\Vert_{LC_{q_{1},\lambda_{1}}^{\left \{  x_{0}\right \}  }%
}\Vert b_{2}\Vert_{LC_{q_{2},\lambda_{2}}^{\left \{  x_{0}\right \}  }}%
\int \limits_{2r}^{\infty}\left(  1+\ln \frac{t}{r}\right)  ^{2}t^{n\left(
\lambda_{1}+\lambda_{2}\right)  -\frac{n}{q}+n\left(  \frac{1}{q_{1}}+\frac
{1}{q_{2}}\right)  -1}\prod \limits_{i=1}^{2}\left \Vert f_{i}\right \Vert
_{L_{p_{i}}\left(  B_{t}\right)  }dt.
\end{align*}
Therefore,%
\begin{align*}
G_{24}  &  =\left \Vert T_{\alpha}^{\left(  2\right)  }\left[  \left(
b_{1}-\left(  b_{1}\right)  _{B_{r}}\right)  f_{1}^{0},\left(  b_{2}-\left(
b_{2}\right)  _{B_{r}}\right)  f_{2}^{\infty}\right]  \right \Vert
_{L_{q}\left(  B_{r}\right)  }\\
&  \lesssim \Vert b_{1}\Vert_{LC_{q_{1},\lambda_{1}}^{\left \{  x_{0}\right \}
}}\Vert b_{2}\Vert_{LC_{q_{2},\lambda_{2}}^{\left \{  x_{0}\right \}  }}%
r^{\frac{n}{q}}\\
&  \times \int \limits_{2r}^{\infty}\left(  1+\ln \frac{t}{r}\right)
^{2}t^{n\left(  \lambda_{1}+\lambda_{2}\right)  -\frac{n}{q}+n\left(  \frac
{1}{q_{1}}+\frac{1}{q_{2}}\right)  -1}\prod \limits_{i=1}^{2}\left \Vert
f_{i}\right \Vert _{L_{p_{i}}\left(  B_{t}\right)  }dt.
\end{align*}

Putting estimates $G_{21},$ $G_{22}$, $G_{23}$, $G_{24}$ together, we get the
desired conclusion%
\begin{align*}
G_{2}  &  =\left \Vert T_{\alpha,\left(  b_{1},b_{2}\right)  }^{\left(
2\right)  }\left(  f_{1}^{0},f_{2}^{\infty}\right)  \right \Vert _{L_{q}\left(
B_{r}\right)  }\lesssim \Vert b_{1}\Vert_{LC_{q_{1},\lambda_{1}}^{\left \{
x_{0}\right \}  }}\Vert b_{2}\Vert_{LC_{q_{2},\lambda_{2}}^{\left \{
x_{0}\right \}  }}r^{\frac{n}{q}}\\
&  \times \int \limits_{2r}^{\infty}\left(  1+\ln \frac{t}{r}\right)
^{2}t^{n\left(  \lambda_{1}+\lambda_{2}\right)  -\frac{n}{q}+n\left(  \frac
{1}{q_{1}}+\frac{1}{q_{2}}\right)  -1}\prod \limits_{i=1}^{2}\left \Vert
f_{i}\right \Vert _{L_{p_{i}}\left(  B_{t}\right)  }dt.
\end{align*}

$(iii)$ Similarly, we have%
\begin{align*}
G_{3}  &  =\left \Vert T_{\alpha,\left(  b_{1},b_{2}\right)  }^{\left(
2\right)  }\left(  f_{1}^{\infty},f_{2}^{0}\right)  \right \Vert _{L_{q}\left(
B_{r}\right)  }\lesssim \Vert b_{1}\Vert_{LC_{q_{1},\lambda_{1}}^{\left \{
x_{0}\right \}  }}\Vert b_{2}\Vert_{LC_{q_{2},\lambda_{2}}^{\left \{
x_{0}\right \}  }}r^{\frac{n}{q}}\\
&  \times \int \limits_{2r}^{\infty}\left(  1+\ln \frac{t}{r}\right)
^{2}t^{n\left(  \lambda_{1}+\lambda_{2}\right)  -\frac{n}{q}+n\left(  \frac
{1}{q_{1}}+\frac{1}{q_{2}}\right)  -1}\prod \limits_{i=1}^{2}\left \Vert
f_{i}\right \Vert _{L_{p_{i}}\left(  B_{t}\right)  }dt.
\end{align*}

$(iv)$ Finally, for $G_{4}=\left \Vert T_{\alpha,\left(  b_{1},b_{2}\right)
}^{\left(  2\right)  }\left(  f_{1}^{\infty},f_{2}^{\infty}\right)
\right \Vert _{L_{q}\left(  B_{r}\right)  }$, we write%
\begin{align*}
G_{4}  &  \lesssim \left \Vert \left(  b_{1}-\left(  b_{1}\right)  _{B_{r}%
}\right)  \left(  b_{2}-\left(  b_{2}\right)  _{B_{r}}\right)  T_{\alpha
}^{\left(  2\right)  }\left(  f_{1}^{\infty},f_{2}^{\infty}\right)
\right \Vert _{L_{q}\left(  B_{r}\right)  }\\
&  +\left \Vert \left(  b_{1}-\left(  b_{1}\right)  _{B_{r}}\right)  T_{\alpha
}^{\left(  2\right)  }\left[  f_{1}^{\infty},\left(  b_{2}-\left(
b_{2}\right)  _{B_{r}}\right)  f_{2}^{\infty}\right]  \right \Vert
_{L_{q}\left(  B_{r}\right)  }\\
&  +\left \Vert \left(  b_{2}-\left(  b_{2}\right)  _{B_{r}}\right)  T_{\alpha
}^{\left(  2\right)  }\left[  \left(  b_{1}-\left(  b_{1}\right)  _{B_{r}%
}\right)  f_{1}^{\infty},f_{2}^{\infty}\right]  \right \Vert _{L_{q}\left(
B_{r}\right)  }\\
&  +\left \Vert T^{\left(  2\right)  }\left[  \left(  b_{1}-\left(
b_{1}\right)  _{B_{r}}\right)  f_{1}^{\infty},\left(  b_{2}-\left(
b_{2}\right)  _{B_{r}}\right)  f_{2}^{\infty}\right]  \right \Vert
_{L_{q}\left(  B_{r}\right)  }\\
&  \equiv G_{41}+G_{42}+G_{43}+G_{44}.
\end{align*}

Let us estimate $G_{41}$, $G_{42}$, $G_{43}$, $G_{44}$ respectively.

Let $1<\tau<\infty$, such that $\frac{1}{q}=\left(  \frac{1}{q_{1}}+\frac
{1}{q_{2}}\right)  +\frac{1}{\tau}$. Then, by H\"{o}lder's inequality and
noting that in (\ref{0}) $\frac{1}{q}=\frac{1}{p_{1}}+\frac{1}{p_{2}}%
-\frac{\alpha}{n}$, we get%
\begin{align*}
G_{41}  &  =\left \Vert \left(  b_{1}-\left(  b_{1}\right)  _{B_{r}}\right)
\left(  b_{2}-\left(  b_{2}\right)  _{B_{r}}\right)  T_{\alpha}^{\left(
2\right)  }\left(  f_{1}^{\infty},f_{2}^{\infty}\right)  \right \Vert
_{L_{q}\left(  B_{r}\right)  }\\
&  \lesssim \left \Vert b_{1}-\left(  b_{1}\right)  _{B_{r}}\right \Vert
_{L_{q_{1}}\left(  B_{r}\right)  }\left \Vert b_{2}-\left(  b_{2}\right)
_{B_{r}}\right \Vert _{L_{q_{2}}\left(  B_{r}\right)  }\left \Vert T_{\alpha
}^{\left(  2\right)  }\left(  f_{1}^{\infty},f_{2}^{\infty}\right)
\right \Vert _{L_{\tau}\left(  B_{r}\right)  }\\
&  \lesssim \Vert b_{1}\Vert_{LC_{q_{1},\lambda_{1}}^{\left \{  x_{0}\right \}
}}\Vert b_{2}\Vert_{LC_{q_{2},\lambda_{2}}^{\left \{  x_{0}\right \}  }%
}\left \vert B_{r}\right \vert ^{\left(  \lambda_{1}+\lambda_{2}\right)
+\left(  \frac{1}{q_{1}}+\frac{1}{q_{2}}\right)  +\frac{1}{\tau}}\\
&  \times \int \limits_{2r}^{\infty}\left \Vert f_{1}\right \Vert _{L_{p_{1}%
}\left(  B_{t}\right)  }\left \Vert f_{2}\right \Vert _{L_{p_{2}}\left(
B_{t}\right)  }t^{-n\left(  \frac{1}{p_{1}}+\frac{1}{p_{2}}\right)  -1+\alpha
}dt\\
&  \lesssim \Vert b_{1}\Vert_{LC_{q_{1},\lambda_{1}}^{\left \{  x_{0}\right \}
}}\Vert b_{2}\Vert_{LC_{q_{2},\lambda_{2}}^{\left \{  x_{0}\right \}  }}%
r^{\frac{n}{q}}\\
&  \times \int \limits_{2r}^{\infty}\left(  1+\ln \frac{t}{r}\right)
^{2}t^{n\left(  \lambda_{1}+\lambda_{2}\right)  -\frac{n}{q}+n\left(  \frac
{1}{q_{1}}+\frac{1}{q_{2}}\right)  -1}\prod \limits_{i=1}^{2}\left \Vert
f_{i}\right \Vert _{L_{p_{i}}\left(  B_{t}\right)  }dt.
\end{align*}

Recalling the estimates used for $G_{22}$, $G_{23}$, $G_{24}$ and also using
the condition (\ref{4}) with $m=2$, we have%
\begin{align*}
&  \left \vert T_{\alpha}^{\left(  2\right)  }\left[  f_{1}^{\infty},\left(
b_{2}-\left(  b_{2}\right)  _{B_{r}}\right)  f_{2}^{\infty}\right]  \left(
x\right)  \right \vert \\
&  \lesssim%
{\displaystyle \int \limits_{\mathbb{R} ^{n}}}
{\displaystyle \int \limits_{\mathbb{R} ^{n}}}
\frac{\left \vert b_{2}\left(  y_{2}\right)  -\left(  b_{2}\right)  _{B_{r}%
}\right \vert \left \vert f_{1}\left(  y_{1}\right)  \chi_{\left(
B_{2r}\right)  ^{c}}\right \vert \left \vert f_{2}\left(  y_{2}\right)
\chi_{\left(  B_{2r}\right)  ^{c}}\right \vert }{\left \vert \left(  x_{0}%
-y_{1},x_{0}-y_{2}\right)  \right \vert ^{2n-\alpha}}dy_{1}dy_{2}\\
&  \lesssim%
{\displaystyle \int \limits_{\left(  B_{2r}\right)  ^{c}}}
{\displaystyle \int \limits_{\left(  B_{2r}\right)  ^{c}}}
\frac{\left \vert b_{2}\left(  y_{2}\right)  -\left(  b_{2}\right)  _{B_{r}%
}\right \vert \left \vert f_{1}\left(  y_{1}\right)  \right \vert \left \vert
f_{2}\left(  y_{2}\right)  \right \vert }{\left \vert x_{0}-y_{1}\right \vert
^{n-\frac{\alpha}{2}}\left \vert x_{0}-y_{2}\right \vert ^{n-\frac{\alpha}{2}}%
}dy_{1}dy_{2}\\
&  \lesssim%
{\displaystyle \sum \limits_{j=1}^{\infty}}
{\displaystyle \int \limits_{B_{2^{j+1}r}\backslash B_{2^{j}r}}}
\frac{\left \vert f_{1}\left(  y_{1}\right)  \right \vert }{\left \vert
x_{0}-y_{1}\right \vert ^{n-\frac{\alpha}{2}}}dy_{1}%
{\displaystyle \int \limits_{B_{2^{j+1}r}\backslash B_{2^{j}r}}}
\frac{\left \vert b_{2}\left(  y_{2}\right)  -\left(  b_{2}\right)  _{B_{r}%
}\right \vert \left \vert f_{2}\left(  y_{2}\right)  \right \vert }{\left \vert
x_{0}-y_{2}\right \vert ^{n-\frac{\alpha}{2}}}dy_{2}\\
&  \lesssim%
{\displaystyle \sum \limits_{j=1}^{\infty}}
\left(  2^{j}r\right)  ^{-2n+\alpha}%
{\displaystyle \int \limits_{B_{2^{j+1}r}}}
\left \vert f_{1}\left(  y_{1}\right)  \right \vert dy_{1}%
{\displaystyle \int \limits_{B_{2^{j+1}r}}}
\left \vert b_{2}\left(  y_{2}\right)  -\left(  b_{2}\right)  _{B_{r}%
}\right \vert \left \vert f_{2}\left(  y_{2}\right)  \right \vert dy_{2}.
\end{align*}
It's obvious that%
\begin{equation}%
{\displaystyle \int \limits_{B_{2^{j+1}r}}}
\left \vert f_{1}\left(  y_{1}\right)  \right \vert dy_{1}\leq \left \Vert
f_{1}\right \Vert _{L_{p_{1}}(B_{2^{j+1}r})}\left \vert B_{2^{j+1}r}\right \vert
^{1-\frac{1}{p_{1}}}, \label{f1}%
\end{equation}
and using H\"{o}lder's inequality and by (\ref{*})%
\begin{align}
&
{\displaystyle \int \limits_{B_{2^{j+1}r}}}
\left \vert b_{2}\left(  y_{2}\right)  -\left(  b_{2}\right)  _{B_{r}%
}\right \vert \left \vert f_{2}\left(  y_{2}\right)  \right \vert dy_{2}%
\nonumber \\
&  \leq \left \Vert b_{2}-\left(  b_{2}\right)  _{B_{2^{j+1}r}}\right \Vert
_{L_{q_{2}}(B_{2^{j+1}r})}\left \Vert f_{2}\right \Vert _{L_{p_{2}}(B_{2^{j+1}%
r})}\left \vert B_{2^{j+1}r}\right \vert ^{1-\left(  \frac{1}{p_{2}}+\frac
{1}{q_{2}}\right)  }\nonumber \\
&  +\left \vert \left(  b_{2}\right)  _{B_{2^{j+1}r}}-\left(  b_{2}\right)
_{B_{r}}\right \vert \left \Vert f_{2}\right \Vert _{L_{p_{2}}(B_{2^{j+1}r}%
)}\left \vert B_{2^{j+1}r}\right \vert ^{1-\frac{1}{p_{2}}}\nonumber \\
&  \lesssim \Vert b_{2}\Vert_{LC_{q_{2},\lambda_{2}}^{\left \{  x_{0}\right \}
}}\left \vert B_{2^{j+1}r}\right \vert ^{\frac{1}{q_{2}}+\lambda_{2}}\left \Vert
f_{2}\right \Vert _{L_{p_{2}}\left(  B_{2^{j+1}r}\right)  }\left \vert
B_{2^{j+1}r}\right \vert ^{1-\left(  \frac{1}{p_{2}}+\frac{1}{q_{2}}\right)
}\nonumber \\
&  +\Vert b_{2}\Vert_{LC_{q_{2},\lambda_{2}}^{\left \{  x_{0}\right \}  }%
}\left(  1+\ln \frac{2^{j+1}r}{r}\right)  \left \vert B_{2^{j+1}r}\right \vert
^{\lambda_{2}}\left \Vert f_{2}\right \Vert _{L_{p_{2}}\left(  B_{2^{j+1}%
r}\right)  }\left \vert B_{2^{j+1}r}\right \vert ^{1-\frac{1}{p_{2}}}\nonumber \\
&  \lesssim \Vert b_{2}\Vert_{LC_{q_{2},\lambda_{2}}^{\left \{  x_{0}\right \}
}}\left(  1+\ln \frac{2^{j+1}r}{r}\right)  ^{2}\left \vert B_{2^{j+1}%
r}\right \vert ^{\lambda_{2}-\frac{1}{p_{2}}+1}\left \Vert f_{2}\right \Vert
_{L_{p_{2}}\left(  B_{2^{j+1}r}\right)  }. \label{f2}%
\end{align}
Hence, by (\ref{f1}) and (\ref{f2}), it follows that:%
\begin{align*}
&  \left \vert T_{\alpha}^{\left(  2\right)  }\left[  f_{1}^{\infty},\left(
b_{2}-\left(  b_{2}\right)  _{B_{r}}\right)  f_{2}^{\infty}\right]  \left(
x\right)  \right \vert \\
&  \lesssim%
{\displaystyle \sum \limits_{j=1}^{\infty}}
\left(  2^{j}r\right)  ^{-2n+\alpha}%
{\displaystyle \int \limits_{B_{2^{j+1}r}}}
\left \vert f_{1}\left(  y_{1}\right)  \right \vert dy_{1}%
{\displaystyle \int \limits_{B_{2^{j+1}r}}}
\left \vert b_{2}\left(  y_{2}\right)  -\left(  b_{2}\right)  _{B_{r}%
}\right \vert \left \vert f_{2}\left(  y_{2}\right)  \right \vert dy_{2}\\
&  \lesssim \Vert b_{2}\Vert_{LC_{q_{2},\lambda_{2}}^{\left \{  x_{0}\right \}
}}%
{\displaystyle \sum \limits_{j=1}^{\infty}}
\left(  2^{j}r\right)  ^{-2n+\alpha}\left(  1+\ln \frac{2^{j+1}r}{r}\right)
^{2}\left \vert B_{2^{j+1}r}\right \vert ^{\lambda_{2}-\left(  \frac{1}{p_{1}%
}+\frac{1}{p_{2}}\right)  +2}\prod \limits_{i=1}^{2}\left \Vert f_{i}\right \Vert
_{L_{p_{i}}\left(  B_{2^{j+1}r}\right)  }\\
&  \lesssim \Vert b_{2}\Vert_{LC_{q_{2},\lambda_{2}}^{\left \{  x_{0}\right \}
}}%
{\displaystyle \sum \limits_{j=1}^{\infty}}
\int \limits_{2^{j+1}r}^{2^{j+2}r}\left(  2^{j+1}r\right)  ^{-2n+\alpha
-1}\left(  1+\ln \frac{2^{j+1}r}{r}\right)  ^{2}\left \vert B_{2^{j+1}%
r}\right \vert ^{\lambda_{2}-\left(  \frac{1}{p_{1}}+\frac{1}{p_{2}}\right)
+2}\prod \limits_{i=1}^{2}\left \Vert f_{i}\right \Vert _{L_{p_{i}}\left(
B_{2^{j+1}r}\right)  }dt\\
&  \lesssim \Vert b_{2}\Vert_{LC_{q_{2},\lambda_{2}}^{\left \{  x_{0}\right \}
}}%
{\displaystyle \sum \limits_{j=1}^{\infty}}
\int \limits_{2^{j+1}r}^{2^{j+2}r}\left(  1+\ln \frac{2^{j+1}r}{r}\right)
^{2}\left \vert B_{2^{j+1}r}\right \vert ^{\lambda_{2}-\left(  \frac{1}{p_{1}%
}+\frac{1}{p_{2}}\right)  +2}\prod \limits_{i=1}^{2}\left \Vert f_{i}\right \Vert
_{L_{p_{i}}\left(  B_{2^{j+1}r}\right)  }\frac{dt}{t^{2n-\alpha+1}}\\
&  \lesssim \Vert b_{2}\Vert_{LC_{q_{2},\lambda_{2}}^{\left \{  x_{0}\right \}
}}\int \limits_{2r}^{\infty}\left(  1+\ln \frac{t}{r}\right)  ^{2}\left \vert
B_{t}\right \vert ^{\lambda_{2}-\left(  \frac{1}{p_{1}}+\frac{1}{p_{2}}\right)
+2}\prod \limits_{i=1}^{2}\left \Vert f_{i}\right \Vert _{L_{p_{i}}\left(
B_{t}\right)  }\frac{dt}{t^{2n-\alpha+1}}%
\end{align*}%
\begin{equation}
\lesssim \Vert b_{2}\Vert_{LC_{q_{2},\lambda_{2}}^{\left \{  x_{0}\right \}  }%
}\int \limits_{2r}^{\infty}\left(  1+\ln \frac{t}{r}\right)  ^{2}t^{n\lambda
_{2}-n\left(  \frac{1}{p_{1}}+\frac{1}{p_{2}}\right)  -1+\alpha}%
\prod \limits_{i=1}^{2}\left \Vert f_{i}\right \Vert _{L_{p_{i}}\left(
B_{t}\right)  }dt. \label{tk2}%
\end{equation}
Let $1<\tau<\infty$, such that $\frac{1}{q}=\frac{1}{q_{1}}+\frac{1}{\tau} $.
Then, by H\"{o}lder's inequality and (\ref{tk2}), we obtain%
\begin{align*}
G_{42}  &  =\left \Vert \left[  \left(  b_{1}-\left(  b_{1}\right)  _{B_{r}%
}\right)  \right]  T_{\alpha}^{\left(  2\right)  }\left[  f_{1}^{\infty
},\left(  b_{2}-\left(  b_{2}\right)  _{B_{r}}\right)  f_{2}^{\infty}\right]
\right \Vert _{L_{q}\left(  B_{r}\right)  }\\
&  \lesssim \left \Vert \left(  b_{1}-\left(  b_{1}\right)  _{B_{r}}\right)
\right \Vert _{L_{q_{1}}\left(  B_{r}\right)  }\left \Vert T_{\alpha}^{\left(
2\right)  }\left[  f_{1}^{\infty},\left(  b_{2}-\left(  b_{2}\right)
_{B}\right)  f_{2}^{\infty}\right]  \right \Vert _{L_{\tau \left(  B_{r}\right)
}}\\
&  \lesssim \Vert b_{1}\Vert_{LC_{q_{1},\lambda_{1}}^{\left \{  x_{0}\right \}
}}\Vert b_{2}\Vert_{LC_{q_{2},\lambda_{2}}^{\left \{  x_{0}\right \}  }}%
r^{\frac{n}{q}}\\
&  \times \int \limits_{2r}^{\infty}\left(  1+\ln \frac{t}{r}\right)
^{2}t^{n\left(  \lambda_{1}+\lambda_{2}\right)  -\frac{n}{q}+n\left(  \frac
{1}{q_{1}}+\frac{1}{q_{2}}\right)  -1}\prod \limits_{i=1}^{2}\left \Vert
f_{i}\right \Vert _{L_{p_{i}}\left(  B_{t}\right)  }dt.
\end{align*}

Similarly, $G_{43}$ has the same estimate above, here we omit the details,
thus the inequality%
\begin{align*}
G_{43}  &  =\left \Vert \left(  b_{2}-\left(  b_{2}\right)  _{B_{r}}\right)
T_{\alpha}^{\left(  2\right)  }\left[  \left(  b_{1}-\left(  b_{1}\right)
_{B_{r}}\right)  f_{1}^{\infty},f_{2}^{\infty}\right]  \right \Vert
_{L_{q}\left(  B_{r}\right)  }\\
&  \lesssim \Vert b_{1}\Vert_{LC_{q_{1},\lambda_{1}}^{\left \{  x_{0}\right \}
}}\Vert b_{2}\Vert_{LC_{q_{2},\lambda_{2}}^{\left \{  x_{0}\right \}  }}%
r^{\frac{n}{q}}\\
&  \times \int \limits_{2r}^{\infty}\left(  1+\ln \frac{t}{r}\right)
^{2}t^{n\left(  \lambda_{1}+\lambda_{2}\right)  -\frac{n}{q}+n\left(  \frac
{1}{q_{1}}+\frac{1}{q_{2}}\right)  -1}\prod \limits_{i=1}^{2}\left \Vert
f_{i}\right \Vert _{L_{p_{i}}\left(  B_{t}\right)  }dt
\end{align*}
is valid.

Finally, to estimate $G_{44}$, similar to the estimate of (\ref{tk2}), we have%
\begin{align*}
&  \left \vert T_{\alpha}^{\left(  2\right)  }\left[  \left(  b_{1}-\left(
b_{2}\right)  _{B_{r}}\right)  f_{1}^{\infty},\left(  b_{2}-\left(
b_{2}\right)  _{B_{r}}\right)  f_{2}^{\infty}\right]  \left(  x\right)
\right \vert \\
&  \lesssim%
{\displaystyle \sum \limits_{j=1}^{\infty}}
\left(  2^{j}r\right)  ^{-2n+\alpha}%
{\displaystyle \int \limits_{B_{2^{j+1}r}}}
\left \vert b_{1}\left(  y_{1}\right)  -\left(  b_{1}\right)  _{B_{r}%
}\right \vert \left \vert f_{1}\left(  y_{1}\right)  \right \vert dy_{1}%
{\displaystyle \int \limits_{B_{2^{j+1}r}}}
\left \vert b_{2}\left(  y_{2}\right)  -\left(  b_{2}\right)  _{B_{r}%
}\right \vert \left \vert f_{2}\left(  y_{2}\right)  \right \vert dy_{2}\\
&  \lesssim \Vert b_{1}\Vert_{LC_{q_{1},\lambda_{1}}^{\left \{  x_{0}\right \}
}}\Vert b_{2}\Vert_{LC_{q_{2},\lambda_{2}}^{\left \{  x_{0}\right \}  }}%
\int \limits_{2r}^{\infty}\left(  1+\ln \frac{t}{r}\right)  ^{2}t^{n\left(
\lambda_{1}+\lambda_{2}\right)  -\frac{n}{q}+n\left(  \frac{1}{q_{1}}+\frac
{1}{q_{2}}\right)  -1}\prod \limits_{i=1}^{2}\left \Vert f_{i}\right \Vert
_{L_{p_{i}}\left(  B_{t}\right)  }dt.
\end{align*}

Thus, we have%
\begin{align*}
G_{44}  &  =\left \Vert T_{\alpha}^{\left(  2\right)  }\left[  \left(
b_{1}-\left(  b_{1}\right)  _{B_{r}}\right)  f_{1}^{\infty},\left(
b_{2}-\left(  b_{2}\right)  _{B_{r}}\right)  f_{2}^{\infty}\right]
\right \Vert _{L_{p}\left(  B_{r}\right)  }\\
&  \lesssim \Vert b_{1}\Vert_{LC_{q_{1},\lambda_{1}}^{\left \{  x_{0}\right \}
}}\Vert b_{2}\Vert_{LC_{q_{2},\lambda_{2}}^{\left \{  x_{0}\right \}  }}%
r^{\frac{n}{q}}\\
&  \times \int \limits_{2r}^{\infty}\left(  1+\ln \frac{t}{r}\right)
^{2}t^{n\left(  \lambda_{1}+\lambda_{2}\right)  -\frac{n}{q}+n\left(  \frac
{1}{q_{1}}+\frac{1}{q_{2}}\right)  -1}\prod \limits_{i=1}^{2}\left \Vert
f_{i}\right \Vert _{L_{p_{i}}\left(  B_{t}\right)  }dt.
\end{align*}

By the estimates of $G_{4j}$ above, where $j=1$,$2$,$3$,$4$, we know that%
\begin{align*}
G_{4}  &  =\left \Vert T_{\alpha,\left(  b_{1},b_{2}\right)  }^{\left(
2\right)  }\left(  f_{1}^{\infty},f_{2}^{\infty}\right)  \right \Vert
_{L_{q}\left(  B_{r}\right)  }\lesssim \Vert b_{1}\Vert_{LC_{q_{1},\lambda_{1}%
}^{\left \{  x_{0}\right \}  }}\Vert b_{2}\Vert_{LC_{q_{2},\lambda_{2}%
}^{\left \{  x_{0}\right \}  }}r^{\frac{n}{q}}\\
&  \times \int \limits_{2r}^{\infty}\left(  1+\ln \frac{t}{r}\right)
^{2}t^{n\left(  \lambda_{1}+\lambda_{2}\right)  -\frac{n}{q}+n\left(  \frac
{1}{q_{1}}+\frac{1}{q_{2}}\right)  -1}\prod \limits_{i=1}^{2}\left \Vert
f_{i}\right \Vert _{L_{p_{i}}\left(  B_{t}\right)  }dt.
\end{align*}

Recalling (\ref{100}), and combining all the estimates for $G_{1},$ $G_{2}$,
$G_{3}$, $G_{4}$, we get%
\begin{align*}
\left \Vert T_{\alpha,\left(  b_{1},b_{2}\right)  }^{\left(  2\right)  }\left(
f_{1},f_{2}\right)  \right \Vert _{L_{q}\left(  B_{r}\right)  }  &
\lesssim \Vert b_{1}\Vert_{LC_{q_{1},\lambda_{1}}^{\left \{  x_{0}\right \}  }%
}\Vert b_{2}\Vert_{LC_{q_{2},\lambda_{2}}^{\left \{  x_{0}\right \}  }}%
r^{\frac{n}{q}}\\
&  \times \int \limits_{2r}^{\infty}\left(  1+\ln \frac{t}{r}\right)
^{2}t^{n\left(  \lambda_{1}+\lambda_{2}\right)  -\frac{n}{q}+n\left(  \frac
{1}{q_{1}}+\frac{1}{q_{2}}\right)  -1}\prod \limits_{i=1}^{2}\left \Vert
f_{i}\right \Vert _{L_{p_{i}}\left(  B_{t}\right)  }dt.
\end{align*}

Therefore, Theorem \ref{Teo 5} is completely proved.
\end{proof}

Now we can give the following theorem, which is another our main result.

\begin{theorem}
\label{teo15}(Our main result) Let $x_{0}\in{\mathbb{R}^{n}}$, $0<\alpha<mn,$
and $1\leq p_{i}<\frac{mn}{\alpha}$ for $i=1,\ldots,m$ such that $\frac{1}%
{q}=\sum \limits_{i=1}^{m}\frac{1}{p_{i}}+\sum \limits_{i=1}^{m}\frac{1}{q_{i}%
}-\frac{\alpha}{n}$ and $\overrightarrow{b}\in LC_{q_{i},\lambda_{i}%
}^{\left \{  x_{0}\right \}  }({\mathbb{R}^{n}})$ for $0\leq \lambda_{i}<\frac
{1}{n}$, $i=1,\ldots,m$. Let also, $T_{\alpha}^{\left(  m\right)  }$ $\left(
m\in%
\mathbb{N}
\right)  $ be a multilinear operator satisfying condition (\ref{4}), bounded
from $L_{p_{1}}\times \cdots \times L_{p_{m}}$ into $L_{q}$. If functions
$\varphi,\varphi_{i}:{\mathbb{R}^{n}\times}\left(  0,\infty \right)
\rightarrow \left(  0,\infty \right)  $ $\left(  i=1,\ldots,m\right)  $ and
$(\varphi_{1},\ldots,\varphi_{m},\varphi)$ satisfies the condition%
\begin{equation}%
{\displaystyle \int \limits_{r}^{\infty}}
\left(  1+\ln \frac{t}{r}\right)  ^{m}t^{n\left(  -\frac{1}{q}+%
{\displaystyle \sum \limits_{i=1}^{n}}
\lambda_{i}+%
{\displaystyle \sum \limits_{i=1}^{m}}
\frac{1}{q_{i}}\right)  -1}\operatorname*{essinf}\limits_{t<\tau<\infty}%
{\displaystyle \prod \limits_{i=1}^{m}}
\varphi_{i}(x_{0},\tau)\tau^{\frac{n}{p_{i}}}dt\leq C\varphi \left(
x_{0},r\right)  , \label{50}%
\end{equation}
where $C$ does not depend on $r$.

Then the operator $T_{\alpha,\overrightarrow{b}}^{\left(  m\right)  }$
$\left(  m\in%
\mathbb{N}
\right)  $ is bounded from product space $LM_{p_{1},\varphi_{1}}^{\{x_{0}%
\}}\times \cdots \times LM_{p_{m},\varphi_{m}}^{\{x_{0}\}}$ to $LM_{q,\varphi
}^{\{x_{0}\}}$. Moreover, we have
\begin{equation}
\left \Vert T_{\alpha,\overrightarrow{b}}^{\left(  m\right)  }\left(
\overrightarrow{f}\right)  \right \Vert _{LM_{q,\varphi}^{\{x_{0}\}}}\lesssim%
{\displaystyle \prod \limits_{i=1}^{m}}
\left \Vert \overrightarrow{b}\right \Vert _{LC_{q_{i},\lambda_{i}}^{\left \{
x_{0}\right \}  }}%
{\displaystyle \prod \limits_{i=1}^{m}}
\left \Vert f_{i}\right \Vert _{LM_{p_{i},\varphi_{i}}^{\{x_{0}\}}}. \label{51}%
\end{equation}

\end{theorem}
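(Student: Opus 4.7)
The plan is to reduce the global $LM_{q,\varphi}^{\{x_{0}\}}$ estimate to the local $L_{q}(B_{r})$ estimate \eqref{200} established in Theorem \ref{Teo 5}, and then to bound the resulting $t$-integral by $\varphi(x_{0},r)$ using the structural hypothesis \eqref{50}. Unfolding the definition of the generalized local Morrey norm,
\[
\left\Vert T_{\alpha,\overrightarrow{b}}^{(m)}(\overrightarrow{f})\right\Vert_{LM_{q,\varphi}^{\{x_{0}\}}} = \sup_{r>0}\varphi(x_{0},r)^{-1}|B_{r}|^{-1/q}\left\Vert T_{\alpha,\overrightarrow{b}}^{(m)}(\overrightarrow{f})\right\Vert_{L_{q}(B_{r})}.
\]
Theorem \ref{Teo 5} controls the inner $L_q$-norm by $r^{n/q}$ times an integral over $(2r,\infty)$, so $|B_{r}|^{-1/q}$ and $r^{n/q}$ cancel up to a constant, and the task reduces to bounding
\[
\int_{2r}^{\infty}\Bigl(1+\ln\tfrac{t}{r}\Bigr)^{m}t^{n(-\frac{1}{q}+\sum\lambda_{i}+\sum\frac{1}{q_{i}})-1}\prod_{i=1}^{m}\Vert f_{i}\Vert_{L_{p_{i}}(B_{t})}\,dt
\]
by a constant multiple of $\varphi(x_{0},r)\prod_{i}\Vert f_{i}\Vert_{LM_{p_{i},\varphi_{i}}^{\{x_{0}\}}}$.

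To carry out that reduction, I follow the essinf/esssup device from the proof of Theorem \ref{teo9}. The idea is to insert the factor $\operatorname{essinf}_{t<\tau<\infty}\prod_{i=1}^{m}\varphi_{i}(x_{0},\tau)\tau^{n/p_{i}}$ into both numerator and denominator. Invoking \eqref{5} and the non-decreasing dependence of $\prod_{i}\Vert f_{i}\Vert_{L_{p_{i}}(B_{t})}$ on $t$, exactly the chain of inequalities in \eqref{9} shows that the ratio
\[
\frac{\prod_{i=1}^{m}\Vert f_{i}\Vert_{L_{p_{i}}(B_{t})}}{\operatorname{essinf}_{t<\tau<\infty}\prod_{i=1}^{m}\varphi_{i}(x_{0},\tau)\tau^{n/p_{i}}}
\]
is dominated, uniformly in $t$, by $\prod_{i=1}^{m}\Vert f_{i}\Vert_{LM_{p_{i},\varphi_{i}}^{\{x_{0}\}}}$. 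Pulling this constant outside the integral leaves precisely the integral appearing on the left-hand side of \eqref{50}, and the hypothesis gives the bound by $C\varphi(x_{0},r)$.

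Combining these two steps and absorbing the factor $\prod_{i}\Vert b_{i}\Vert_{LC_{q_{i},\lambda_{i}}^{\{x_{0}\}}}$ that is already present in \eqref{200}, dividing by $\varphi(x_{0},r)$, and taking the supremum in $r>0$ produces \eqref{51}. The replacement of the lower limit $2r$ by $r$ in the $t$-integral, needed to match \eqref{50} exactly, costs only a constant, since the integrand is non-negative.

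The principal obstacle is conceptual rather than computational: one must verify that the essinf/esssup trick from Theorem \ref{teo9} still works in the presence of the additional logarithmic factor $(1+\ln(t/r))^{m}$ and the shifted exponent $t^{n(\sum\lambda_{i}+\sum 1/q_{i}-1/q)-1}$ produced by the commutator structure, rather than the cleaner $t^{-n/q-1}$ that appears in Theorem \ref{Teo2}. Both deviations are precisely accommodated by the hypothesis \eqref{50}, so no new Campanato or Morrey estimate is required once Theorem \ref{Teo 5} is available; the remainder is bookkeeping of the monotonicity argument exactly as in the non-commutator case.
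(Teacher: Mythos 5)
Your proposal is correct and follows essentially the same route as the paper: the paper's proof of Theorem \ref{teo15} likewise combines the local estimate \eqref{200} from Theorem \ref{Teo 5} with the essinf/esssup bound \eqref{9} and the hypothesis \eqref{50} to produce the intermediate estimate \eqref{13}, then cancels $|B_{r}|^{-1/q}$ against $r^{n/q}$ and takes the supremum in $r$. Your remarks about the harmless passage from the lower limit $2r$ to $r$ and about the logarithmic factor being absorbed directly by \eqref{50} match what the paper does implicitly.
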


\begin{proof}
Similar to the proof of Theorem \ref{teo9}, For $1<p_{1},\ldots,p_{m}<\infty$,
since $(\varphi_{1},\ldots,\varphi_{m},\varphi)$ satisfies (\ref{50}) and by
(\ref{9}), we have%
\begin{align}
&  \int \limits_{r}^{\infty}\left(  1+\ln \frac{t}{r}\right)  ^{m}t^{n\left(
-\frac{1}{q}+%
{\displaystyle \sum \limits_{i=1}^{m}}
\lambda_{i}+%
{\displaystyle \sum \limits_{i=1}^{m}}
\frac{1}{q_{i}}\right)  -1}%
{\displaystyle \prod \limits_{i=1}^{m}}
\Vert f_{i}\Vert_{L_{p_{i}}(B_{t})}dt\nonumber \\
&  \leq \int \limits_{r}^{\infty}\left(  1+\ln \frac{t}{r}\right)  ^{m}\frac{%
{\displaystyle \prod \limits_{i=1}^{m}}
\left \Vert f_{i}\right \Vert _{L_{p_{i}}\left(  B_{t}\right)  }}%
{\operatorname*{essinf}\limits_{t<\tau<\infty}%
{\displaystyle \prod \limits_{i=1}^{m}}
\varphi_{i}(x_{0},\tau)\tau^{\frac{n}{p_{i}}}}\frac{\operatorname*{essinf}%
\limits_{t<\tau<\infty}%
{\displaystyle \prod \limits_{i=1}^{m}}
\varphi_{i}(x_{0},\tau)\tau^{\frac{n}{p_{i}}}}{t^{n\left(  \frac{1}{q}-%
{\displaystyle \sum \limits_{i=1}^{m}}
\lambda_{i}-%
{\displaystyle \sum \limits_{i=1}^{m}}
\frac{1}{q_{i}}\right)  }}\frac{dt}{t}\nonumber \\
&  \leq C%
{\displaystyle \prod \limits_{i=1}^{m}}
\left \Vert f_{i}\right \Vert _{LM_{p_{i},\varphi_{i}}^{\{x_{0}\}}}%
\int \limits_{r}^{\infty}\left(  1+\ln \frac{t}{r}\right)  ^{m}\frac
{\operatorname*{essinf}\limits_{t<\tau<\infty}%
{\displaystyle \prod \limits_{i=1}^{m}}
\varphi_{i}(x_{0},\tau)\tau^{\frac{n}{p}}}{t^{n\left(  \frac{1}{q}-%
{\displaystyle \sum \limits_{i=1}^{m}}
\lambda_{i}-%
{\displaystyle \sum \limits_{i=1}^{m}}
\frac{1}{q_{i}}\right)  +1}}dt\nonumber \\
&  \leq C%
{\displaystyle \prod \limits_{i=1}^{m}}
\left \Vert f_{i}\right \Vert _{LM_{p_{i},\varphi_{i}}^{\{x_{0}\}}}\varphi
(x_{0},r). \label{13}%
\end{align}
Then by (\ref{200}) and (\ref{13}), we get%
\begin{align*}
\left \Vert T_{\alpha,\overrightarrow{b}}^{\left(  m\right)  }\left(
\overrightarrow{f}\right)  \right \Vert _{LM_{q,\varphi}^{\{x_{0}\}}}  &
=\sup_{r>0}\varphi \left(  x_{0},r\right)  ^{-1}|B_{r}|^{-\frac{1}{q}%
}\left \Vert T_{\alpha,\overrightarrow{b}}^{\left(  m\right)  }\left(
\overrightarrow{f}\right)  \right \Vert _{L_{q}\left(  B_{r}\right)  }\\
&  \lesssim%
{\displaystyle \prod \limits_{i=1}^{m}}
\left \Vert \overrightarrow{b}\right \Vert _{LC_{q_{i},\lambda_{i}}^{\left \{
x_{0}\right \}  }}\sup_{r>0}\varphi \left(  x_{0},r\right)  ^{-1}\int
\limits_{r}^{\infty}\left(  1+\ln \frac{t}{r}\right)  ^{m}t^{n\left(  -\frac
{1}{q}+%
{\displaystyle \sum \limits_{i=1}^{m}}
\lambda_{i}+%
{\displaystyle \sum \limits_{i=1}^{m}}
\frac{1}{q_{i}}\right)  -1}%
{\displaystyle \prod \limits_{i=1}^{m}}
\Vert f_{i}\Vert_{L_{p_{i}}(B_{t})}dt\\
&  \lesssim%
{\displaystyle \prod \limits_{i=1}^{m}}
\left \Vert \overrightarrow{b}\right \Vert _{LC_{q_{i},\lambda_{i}}^{\left \{
x_{0}\right \}  }}%
{\displaystyle \prod \limits_{i=1}^{m}}
\left \Vert f_{i}\right \Vert _{LM_{p_{i},\varphi_{i}}^{\{x_{0}\}}}.
\end{align*}
Thus we obtain (\ref{51}). Hence the proof is completed.
\end{proof}

For the multi-sublinear commutator of the multi-sublinear fractional maximal
operator%
\[
M_{\alpha,\overrightarrow{b}}^{\left(  m\right)  }\left(  \overrightarrow
{f}\right)  \left(  x\right)  =\sup_{t>0}\left \vert B\left(  x,t\right)
\right \vert ^{\frac{\alpha}{n}}\int \limits_{B\left(  x,t\right)  }\frac
{1}{\left \vert B\left(  x,t\right)  \right \vert }%
{\displaystyle \prod \limits_{i=1}^{m}}
\left[  b_{i}\left(  x\right)  -b_{i}\left(  y_{i}\right)  \right]  \left \vert
f_{i}\left(  y_{i}\right)  \right \vert d\overrightarrow{y}%
\]
from Theorem \ref{teo15} we get the following new results.

\begin{corollary}
\label{corollary 3*}Let $x_{0}\in{\mathbb{R}^{n}}$, $0<\alpha<mn,$ and $1\leq
p_{i}<\frac{mn}{\alpha}$ for $i=1,\ldots,m$ such that $\frac{1}{q}%
=\sum \limits_{i=1}^{m}\frac{1}{p_{i}}+\sum \limits_{i=1}^{m}\frac{1}{q_{i}%
}-\frac{\alpha}{n}$ and $\overrightarrow{b}\in LC_{q_{i},\lambda_{i}%
}^{\left \{  x_{0}\right \}  }({\mathbb{R}^{n}})$ for $0\leq \lambda_{i}<\frac
{1}{n}$, $i=1,\ldots,m$ and $(\varphi_{1},\ldots,\varphi_{m},\varphi)$
satisfies condition (\ref{50}). Then, the operators $M_{\alpha,\overrightarrow
{b}}^{\left(  m\right)  }$ and $\overline{T}_{\alpha,\overrightarrow{b}%
}^{\left(  m\right)  }$ $\left(  m\in%
\mathbb{N}
\right)  $ are bounded from product space $LM_{p_{1},\varphi_{1}}^{\{x_{0}%
\}}\times \cdots \times LM_{p_{m},\varphi_{m}}^{\{x_{0}\}}$ to $LM_{q,\varphi
}^{\{x_{0}\}}$ for $p_{i}>1$ $\left(  i=1,\ldots,m\right)  $.
\end{corollary}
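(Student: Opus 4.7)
The plan is to observe that both operators named in Corollary \ref{corollary 3*} fall within the scope of Theorem \ref{teo15}, so that the proof amounts to two verifications: that the size condition (\ref{4}) holds for the underlying (non-commutator) operator, and that the underlying operator is strong-type bounded from $L_{p_1}\times\cdots\times L_{p_m}$ into $L_q$.

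The first case, that of $\overline{T}_{\alpha,\overrightarrow{b}}^{\left(m\right)}$, is essentially immediate. The kernel identity (\ref{1}) shows that the operator $\overline{T}_{\alpha}^{\left(m\right)}$ satisfies (\ref{4}) with $c_0 = 1$ (in fact with equality), and Theorem \ref{Teo-1}(1) supplies the required $L_{p_1}\times\cdots\times L_{p_m} \to L_q$ boundedness as soon as each $p_i > 1$. Consequently Theorem \ref{teo15}, applied with $T_{\alpha}^{\left(m\right)} = \overline{T}_{\alpha}^{\left(m\right)}$ and with the given $(\varphi_1,\ldots,\varphi_m,\varphi)$ satisfying (\ref{50}), yields (\ref{51}) for $\overline{T}_{\alpha,\overrightarrow{b}}^{\left(m\right)}$, which is precisely the claimed boundedness on the product generalized local Morrey spaces.

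For the second case, that of the multi-sublinear commutator $M_{\alpha,\overrightarrow{b}}^{\left(m\right)}$, the plan is to reduce to the first case by a pointwise domination extending the estimate $M_{\alpha}f \leq \nu_n^{\alpha/n-1}\overline{T}_{\alpha}(|f|)$ recalled after Remark 2.1 of \cite{Li-Yang}. For any $t>0$ and $(y_1,\ldots,y_m) \in (B(x,t))^m$ we have $|(x-y_1,\ldots,x-y_m)| \leq \sqrt{m}\, t$, so that
\[
|B(x,t)|^{\alpha/n - m}\prod_{i=1}^{m}|b_i(x)-b_i(y_i)| \; \lesssim \; \frac{\prod_{i=1}^{m}|b_i(x)-b_i(y_i)|}{|(x-y_1,\ldots,x-y_m)|^{mn-\alpha}}.
\]
Taking absolute values inside the defining integrand of $M_{\alpha,\overrightarrow{b}}^{\left(m\right)}$, integrating over $(B(x,t))^m \subset (\mathbb{R}^n)^m$, and taking the supremum in $t$ gives the pointwise bound $|M_{\alpha,\overrightarrow{b}}^{\left(m\right)}(\overrightarrow{f})(x)| \lesssim \overline{T}_{\alpha,|\overrightarrow{b}|}^{\left(m\right)}(|f_1|,\ldots,|f_m|)(x)$ in the natural absolute-value sense. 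Hence $M_{\alpha,\overrightarrow{b}}^{\left(m\right)}$ is controlled in norm by the first case, and (\ref{51}) transfers.

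The main obstacle I expect is purely cosmetic: reconciling the \emph{sub}linear nature of $M_{\alpha,\overrightarrow{b}}^{\left(m\right)}$ with the \emph{linear} framework of Theorem \ref{Teo 5}/\ref{teo15}, whose proof is written for $T_{\alpha,\overrightarrow{b}}^{\left(m\right)}$ rather than for a general multi-sublinear commutator. Inspection of the proof of Theorem \ref{Teo 5} shows that only the size condition (\ref{4}) on $T_{\alpha}^{\left(m\right)}$ together with $L_{p_1}\times\cdots\times L_{p_m}\to L_q$ boundedness is ever used (the decomposition $T_{\alpha,(b_1,b_2)}^{\left(2\right)}(f_1,f_2) = H_1+H_2+H_3+H_4$ depends only on the commutator structure, not on linearity per se), so this adaptation is routine; no additional smoothness of the kernel and no cancellation are required. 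Once this is noted, both assertions of Corollary \ref{corollary 3*} follow from Theorem \ref{teo15} with no further computation.
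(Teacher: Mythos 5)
Your proposal is correct and matches the paper's (unstated) argument: the paper obtains Corollary \ref{corollary 3*} simply by invoking Theorem \ref{teo15}, exactly as you do, with the kernel identity (\ref{1}) together with Theorem \ref{Teo-1} supplying the hypotheses for $\overline{T}_{\alpha,\overrightarrow{b}}^{(m)}$, and the pointwise domination of $M_{\alpha,\overrightarrow{b}}^{(m)}$ by the positive fractional integral commutator handling the maximal case. Your closing remark should be stated slightly more carefully --- the decomposition into $H_1+\cdots+H_4$ in Theorem \ref{Teo 5} is an identity only for linear operators and survives merely as an inequality for the sublinear, absolute-value variants --- but since every term $G_{ij}$ in that proof is estimated through the positive size condition (\ref{4}) anyway, this is exactly the routine adaptation you claim, and the paper glosses over it entirely.
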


\begin{remark}
Note that, in the case of $m=1$ Theorem \ref{teo15} and Corollary
\ref{corollary 3*} have been proved in \cite{Gurbuz1, Gurbuz2}.
\end{remark}

\textbf{Acknowledgements}

The author would like to express his deep gratitude to the Assistant Professor
Turhan KARAMAN (Ahi Evran University, K\i r\c{s}ehir, TURKEY) for carefully
reading the manuscript and giving some valuable suggestions and important
comments during the process of this study.

\end{document}